\documentclass[11pt,a4paper]{amsart}
\usepackage[margin=1in]{geometry}
\usepackage[utf8]{inputenc}
\usepackage{amsmath}
\usepackage{amsfonts}
\usepackage{xypic}
\usepackage{amsthm}
\usepackage{mathrsfs}
\usepackage{amssymb}
\usepackage[all]{xy}
\usepackage{color}

\usepackage{hyperref}
\usepackage{graphicx}
\usepackage{wrapfig}
\usepackage{float}
\usepackage[cal=boondox,scr=boondoxo]{mathalfa} 
\graphicspath{ {dibujos/} }
\usepackage{authblk}


\newcommand{\PPp}{\mathbcal{P}}
\newcommand{\UUu}{\mathbcal{U}}
\newcommand{\Jjac}{\mathbcal{J}\!\! \mathbcal{a}\!\! \mathbcal{c}}

\numberwithin{equation}{section}
\newtheorem{theorem}{Theorem}[section]
\newtheorem*{theorem*}{Theorem}
\newtheorem{lemma}[theorem]{Lemma}
\newtheorem{proposition}[theorem]{Proposition}
\newtheorem{corollary}[theorem]{Corollary}

\newtheorem{assumption}{Assumption}

\theoremstyle{remark}
\newtheorem{remark}[theorem]{Remark}

\providecommand{\customgenericname}{}
\newcommand{\newcustomtheorem}[2]{%
  \newenvironment{#1}[1]
  {%
   \renewcommand\customgenericname{#2}%
   \renewcommand\theinnercustomgeneric{##1}%
   \innercustomgeneric
  }
  {\endinnercustomgeneric}
}

\newcustomtheorem{customthm}{Theorem}
\newcustomtheorem{customprop}{Proposition}
\newcustomtheorem{customlemma}{Lemma}

\newtheoremstyle{TheoremNum}
        {\topsep}{\topsep}              
        {\itshape}                      
        {}                              
        {\bfseries}                     
        {.}                             
        { }                             
        {\thmname{#1}\thmnote{ \bfseries #3}}
    \theoremstyle{TheoremNum}

\newcommand{\lie}{\mathfrak}

\DeclareMathOperator{\Jac}{Jac}

\newcommand{\BBB}{\mathrm{(BBB)}}
\newcommand{\BAA}{\mathrm{(BAA)}}

\newcommand{\Spec}{\mathrm{Spec}}

\newcommand{\Hilb}{\mathrm{Hilb}}

\newcommand{\supp}{\mathrm{supp}}
\newcommand{\End}{\mathrm{End}}

\newcommand{\Tor}{\mathrm{Tor}}
\newcommand{\Sym}{\mathrm{Sym}}
\newcommand{\sym}{\mathfrak{S}}

\newcommand{\ad}{\mathrm{ad}}

\newcommand{\id}{\mathbf{1}}
\newcommand{\rk}{\mathrm{rk}}

\newcommand{\GL}{\mathrm{GL}}
\newcommand{\SL}{\mathrm{SL}}

\newcommand{\U}{\mathrm{U}}
\newcommand{\sing}{\mathrm{sing}}

\newcommand{\nod}{\mathrm{nod}}
\newcommand{\sm}{\mathrm{sm}}
\newcommand{\red}{\mathrm{red}}
\newcommand{\Ord}{\mathrm{Ord}}
\newcommand{\st}{\mathrm{st}}
\newcommand{\sst}{\mathrm{sst}}
\newcommand{\pst}{\mathrm{pst}}

\newcommand{\Aa}{\mathcal{A}}

\newcommand{\Dd}{\mathcal{D}}
\newcommand{\Ee}{\mathcal{E}}
\newcommand{\Ff}{\mathcal{F}}
\newcommand{\Gg}{\mathcal{G}}
\newcommand{\Ii}{\mathcal{I}}

\newcommand{\Jj}{\mathcal{J}}

\newcommand{\Ll}{\mathcal{L}}

\newcommand{\Oo}{\mathcal{O}}
\newcommand{\Pp}{\mathcal{P}}

\newcommand{\Ss}{\mathcal{S}}

\newcommand{\Uu}{\mathcal{U}}

\newcommand{\Zz}{\mathcal{Z}}

\newcommand{\Car}{\mathrm{Car}}
\newcommand{\Bor}{\mathrm{Bor}}
\newcommand{\Par}{\mathrm{Par}}
\newcommand{\Uni}{\mathrm{Uni}}

\newcommand{\CCar}{\mathbf{Car}}
\newcommand{\UUni}{\mathbf{Uni}}

\newcommand{\B}{\mathrm{B}}
\newcommand{\C}{\mathrm{C}}

\renewcommand{\H}{\mathrm{H}}
\newcommand{\Hr}{\mathrm{H}_{\ol{r}}}
\renewcommand{\L}{\mathrm{L}}
\newcommand{\M}{\mathrm{M}}
\newcommand{\Mr}{\mathrm{M}_{\ol{r}}}

\newcommand{\N}{\mathrm{N}}
\renewcommand{\P}{\mathrm{P}}

\newcommand{\V}{\mathrm{V}}
\newcommand{\W}{\mathrm{W}}

\newcommand{\Bbor}{\mathbcal{B\! o\! r}}
\newcommand{\MMm}{\mathbcal{M}}


\newcommand{\ol}[1]{\overline{#1}}

\newcommand{\mc}[1]{\mathcal{#1}}

\newcommand{\wt}[1]{\widetilde{#1}}


\newcommand{\spec}{\ol{X}}



\newcommand{\imaginary}{\mathrm{i}}

\newcommand{\Ccc}{\mathbf{Car}}
\newcommand{\Fff}{\mathscr{F}}

\newcommand{\Lll}{\mathscr{L}}

\newcommand{\CC}{\mathbb{C}}

\newcommand{\EE}{\mathbb{E}}

\newcommand{\PP}{\mathbb{P}}

\newcommand{\plonge}{\hookrightarrow}

\newcommand{\quotient}[2]{{\raisebox{.2em}{\thinspace $#1$}\left / \raisebox{-.15em}{ $#2$}\right.}}

\newcommand\Quotient[2]{
        \mathchoice
            {
                \text{\raise1ex\hbox{\thinspace $#1$}\Big{/} \lower1ex\hbox{$#2$} \thinspace}%
            }
            {
                #1\,/\,#2
            }
            {
                #1\,/\,#2
            }
            {
                #1\,/\,#2
            }
    }

\newcommand\GIT[2]{
        \mathchoice
            {
                \text{\raise1ex\hbox{\thinspace $#1$}\Big{/}\!\!\!\!\Big{/} \lower1ex\hbox{$#2$} \thinspace}%
            }
            {
                #1\,/\,#2
            }
            {
                #1\,/\,#2
            }
            {
                #1\,/\,#2
     a       }
    }

\newcommand{\map}[5]{\begin{array}{ccc}   #1  & \stackrel{#5}{\longrightarrow} &  #2  \\  #3 & \longmapsto & #4  \end{array}}

\newcommand{\morph}[6]{\begin{array}{cccc} #6: & #1  & \stackrel{#5}{\longrightarrow} &  #2  \\ & #3 & \longmapsto & #4  \end{array}}

\title[Branes via Borel and other parabolic subgroups]{\bf Branes on the singular locus of the Hitchin system via Borel and other parabolic subgroups}

\author[E. Franco]{Emilio Franco}
\address{E. Franco,
\newline\indent Centro de An\'alise Matem\'atica, Geometria e Sistemas Din\^{a}micos, 
\newline\indent Instituto Superior T\'ecnico, Universidade de Lisboa, 
\newline\indent Av. Rovisco Pais s/n, 1049-001 Lisboa, Portugal}
\email{emilio.franco@tecnico.ulisboa.pt}

\author{Ana Pe\'on-Nieto}
\address{Ana Pe\'on-Nieto \newline\indent Laboratoire de Math\'ematiques J.A. Dieudonn\'e \newline\indent UMR  7351 CNRS \newline\indent Universit\'e de Nice Sophia-Antipolis \newline\indent 06108 Nice Cedex 02, France}
\email{ana.peon-nieto@unice.fr}

\thanks{E. Franco is currently supported by FCT (Portugal) in the framework of the Investigador FCT program. He has previously been supported by project PTDC/MAT- GEO/2823/2014 funded by FCT with Portuguese national funds and FAPESP postdoctoral grant number 2012/16356-6 and BEPE-2015/06696-2 (Brazil). 
\\
A. Pe\'on-Nieto is currently supported by the scheme H2020-MSCA-IF-2019,  Agreement n.	897722 (GoH). She was formerly funded through a  Beatriu de Pin\'os grant n. 2018 BP 332 (H2020-MSCA-COFUND-2017 Agreement n. 801370), a postdoctoral grant associated to the project FP7 - PEOPLE - 2013 - CIG - GEOMODULI
 number: 618471, a postdoctoral contract of the Heidelberg Institute for Theoretical Studies, a MATCH postdoctoral fellowship and the European Research Council under ERC-Consolidator grant 614733.}

\date{\today}

\subjclass[2010]{14J33; Secondary 14D21} 

\keywords{Higgs bundles, mirror symmetry}

\begin{document}

\begin{abstract}
We study mirror symmetry on the singular locus of the Hitchin system at two levels. Firstly, by covering it by (supports of) $\BBB$-branes, corresponding to Higgs bundles reducing their structure group to the Levi subgroup of some parabolic subgroup $\P$, whose conjectural dual $\BAA$-branes we describe. Heuristically speaking, the latter are given by Higgs bundles reducing their structure group to the unipotent radical of $\P$. Secondly, when $\P$ is a Borel subgroup, we are able to construct a family of hyperholomorphic bundles on the $\BBB$-brane, and study the variation of the dual under this choice. We give evidence of both families of branes being dual under mirror symmetry via an integral functor induced by Fourier--Mukai in the moduli stack of Higgs bundles.
%
%
%
\end{abstract}

\maketitle

\tableofcontents

\section{Introduction}

\subsection{Brief description}

In this paper we study the action of mirror symmetry on the singular locus of the moduli space $\M_n$ of Higgs bundles. We proceed first by describing hyperholomorphic subvarieties covering $\M_n^\sing$, those become $\BBB$-branes after specifying a hyperholomorphic bundle on them. Then, we construct complex Lagrangian subvarieties, supporting $\BAA$-branes after being equipped with a flat bundle, and we conjecture that behind these constructions stands a pair of mirror dual branes. Each of the previous pairs of branes is naturally associated to a parabolic subgroup of $\GL(n,\CC)$. When this parabolic is the Borel subgroup we find ourselves over the locus of totally reducible spectral curves. A more complete analysis is possible in this case and we are able to construct families of flat (hence hyperholomorphic) bundles giving rise to $\BBB$-branes. These $\BBB$-branes only intersect Hitchin fibres associated to coarse compactified Jacobians where no Fourier--Mukai transform has been defined. We then consider the Fourier--Mukai transform between the associated stacks and prove that it restricts to a transform whose source is the support of the $\BBB$-branes associated to the Borel subgroup. Our biggest contribution is the description of the behaviour of these $\BBB$-branes under such a transform, showing that it returns a sheaf supported on the complex Lagrangian subvarieties we have previously described.

\subsection{Mathematical background and motivation}

Hitchin introduced in \cite{hitchin-self} Higgs bundles over a smooth projective curve $X$ and soon it was noted that their moduli space $\M_n$ carries a very interesting geometry \cite{hitchin-self, simpson1, simpson2, Nitin}. In particular $\M_n$ can be endowed with a hyperk\"ahler structure $(g, \Gamma_1, \Gamma_2, \Gamma_3)$ \cite{hitchin-self, simpson1, simpson2, donaldson, corlette} and fibres over a vector space $h: \M_n \to \H$ with Lagrangian tori as generic fibres \cite{hitchin_duke}. A natural generalization is to consider Higgs bundles for complex reductive Lie groups other than $\GL(n,\CC)$. After the work of \cite{HT, donagi&gaitsgory, donagi&pantev}, the moduli spaces of Higgs bundles for two Langlands dual groups equipped with the afore mentioned fibrations become \emph{SYZ mirror partners} (as defined by \cite{HT} based on work by \cite{SYZ}) and mirror symmetry is expected to be implemented by a Fourier-Mukai transform relative to the fibres of the Hitchin fibration. In this paper we focus in the case of $\GL(n,\CC)$, which is Langlands self-dual. 

Branes in the Higgs moduli space were introduced in \cite{kapustin&witten} and have since attracted great attention. A {\it $\BBB$-brane} in $\M_n$ is given by a pair $(\N, \Fff, \nabla_\Fff)$, where $\N \subset\M_n$ is a hyperholomorphic subvariety and $(\Fff, \nabla_\Fff)$ a hyperholomorphic sheaf on $\N$. This means that the connection $\nabla_{\Fff}$ on the sheaf $\Fff$ is of type $(1,1)$ with respect to all three complex strutures $\Gamma_1$, $\Gamma_2$, $\Gamma_3$. Additionally, a {\it $\BAA$-brane} is a pair $(S, W, \nabla_W)$ where $S \subset \M_n$ is a subvariety which is complex Lagrangian with respect to the holomorphic symplectic form in complex structure $\Gamma_1$, and $(W, \nabla_W)$ is a flat bundle over $S$. It is conjectured in \cite{kapustin&witten} that mirror symmetry interchanges $\BBB$-branes with $\BAA$-branes. This context has motivated many authors to construct $\BBB$ and $\BAA$-branes \cite{hitchin_char, BS2, BGP, heller&schaposnik, biswas&calvo&franco&garciaP,hitchin_spinors, gaiotto, garciaprada-ramanan, franco&jardim, BS3, B, BBS}. Papers such as \cite{hitchin_spinors, gaiotto, franco&jardim} go a step further by giving evidence of the duality between certain $\BBB$ and $\BAA$-branes, however focusing on the smooth locus of the Hitchin system.

Mirror symmetry is more obscure over singular Hitchin fibres, since it involves autoduality of compactified Jacobians of singular curves. Such autoduality was stated via Fourier--Mukai equivalences by Arinkin \cite{arinkin1, arinkin2} in the case of integral curves, and by Melo, Rapagnetta and Viviani \cite{melo1, melo2} in the case of fine compactified Jacobians. Kass \cite{kass} extended the autoduality to the case of coarse compactified Jacobians, which is the one that concerns us, although his construction does not provide a Fourier--Mukai transform. 

Our main motivation is to extend the study of mirror symmetry for branes to the locus of singular Hitchin fibres. This has been addressed also in some papers that appeared after the first preprint of the present one. In \cite{TorsionBundles}, written by the authors along with Gothen and Oliveira, some pair of $\BBB$ and $\BAA$-branes are considered, noting that the $\BBB$-branes play a crucial role in topological mirror symmetry \cite{HT}. These branes are dense over Hitchin fibres associated to integral curves so Arinkin's Fourier--Mukai transform \cite{arinkin1, arinkin2} is enough to study, in this case, the behaviour of these branes under mirror symmetry. Branco \cite{B} studies the intersection of certain branes with the locus of Hitchin fibres associated to non-reduced curves. In this case, mirror symmetry is discussed in geometrical terms, by dualizing a certain abelian variety inside the non-reduced Hitchin fibres. It is noteworthy to mention the work of Hausel, Mellit and Pei \cite{HMP}, who showed that the pair of branes described by Hitchin in \cite{hitchin_char} satisfy an agreement of certain topological invariants. This gives strong evidence for the duality of these branes, as proposed in \cite{hitchin_char}, where such duality was only checked over the locus of smooth Hitchin fibres.

\subsection{Our work}


We start by constructing a family of $\BBB$-branes and complex Lagrangian subvarieties (support of $\BAA$-branes) indexed by a topologically trivial line bundle $\Ll \to X$. Both lie over the locus of singular Hitchin fibres given by totally reducible spectral curves and both constructions involve the Borel subgroup $\B < \GL(n,\CC)$. 

We shall consider $\Car$, the locus of Higgs bundles whose structure group reduces to the Cartan subgroup $\C<\B$, as the support of our $\BBB$-brane. It is well known that this subvariety is naturally hyperholomorphic (being given by reduction of the structure group to a reductive subgroup), the novel point of this piece of work is the construction of different flat (hence hyperholomorphic) bundles, constructed from a chosen line bundle $\Ll\to X$. Our $\BBB$-brane $\CCar (\Ll)$ consists of $\Car$ equipped with this bundle. 
The image of $\Car$ under the Hitchin fibration $h(\Car)$ is the locus totally reducible spectral curves $\ol{X}_b$, making Schaub's spectral correspondence \cite{Schaub} explicit over this subset of the singular locus.

We define as well a complex Lagrangian subvariety $\Uni(\Ll)$ consisting of Higgs bundles whose structure group reduces to $\B$, and whose associated graded bundle is constant and depends on $\Ll$. Thus, this complex Lagrangian subvariety depends on $\Ll\to X$, and, heuristically speaking, parametrizes Higgs bundles that reduce their structure group to the unipotent radical of $\B$.  After specifying a flat bundle over $\Uni(\Ll)$, we shall obtain a $\BAA$-brane.

To study the behaviour of $\CCar(\Ll)$ and $\Uni(\Ll)$ under mirror symmetry one would like to transform $\CCar(\Ll)$ under a Fourier--Mukai transform. These branes are supported on $h(\Car)$, included in the locus of (singular) reducible curves. Then, $\Car$ and $\Uni(\Ll)$ only intersect Hitchin fibers $h^{-1}(b) \cong \overline{\Jac}(\ol{X}_b)$ that are coarse compactified Jacobians, not fine, and therefore a full Fourier--Mukai transform is not known to exist, not even after restricting ourselves to the open subset of the Cartan locus whose associated spectral curves are nodal. Nevertheless, it is possible to construct a Poincar\'e sheaf over the moduli stack of torsion-free sheaves over reducible nodal curves although it is yet not known whether the associated integral functor is a derived equivalence or not. The restriction of this stacky Poincar\'e sheaf to the support of the stacky version of $\CCar(\Ll)$ and the Jacobian can be lifted to a sheaf on the corresponding schemes. We then define the associated integral functor 
\[
\Phi^\Car : D^b \left( \Car \cap \ol{\Jac}(\ol{X}_b) \right ) \longrightarrow
D^b \left( \Jac(\ol{X}_b) \right ).
\]

Our main result (Corollary \ref{co support of the dual brane in the Hitchin fibre}) consists on checking that this functor relates the generic loci of both branes. 

\begin{theorem*} There is an equality
\[
\supp \left ( \Phi^\Car \left ( \CCar(\Ll) |_{\ol{\Jac}(\ol{X}_b)}  \right ) \right ) = \Uni(\Ll) \cap \Jac(\ol{X}_b).
\]
\end{theorem*}

We finish by discussing how this construction can be generalized to a large class of branes in the moduli space $\M_n$ of rank $n$ Higgs bundles covering the whole singular locus. In the $\BBB$-case, the support of these branes correspond to the image of $\M_{r_1} \times \dots \times \M_{r_s}$, or equivalently, the locus of those Higgs bundles reducing its structure group to the Levi subgroup $\GL(r_1, \CC) \times \dots \times \GL(r_s, \CC)$. We observe that these subvarieties cover the singular locus of $\M_n$. The $\BAA$-brane is given by a complex Lagrangian subvariety constructed in a similar way as before, but substituting the Borel subgroup with the parabolic subgroup associated to the partition $n = r_1 + \dots + r_s$. As in the case of the Borel group, we are able to identify the spectral correspondence over the nodal locus.

A word should be said about the possible applications of the present piece of work. The branes hereby described are used in a crucial way in \cite{TorsionBundles} to prove that certain branes are of type $\BAA$. On the other hand, the analysis of spectral data corresponding to reducible spectral curves furnishes a useful tool to study the geometry of these loci.

\subsection{Structure of the paper}

The greater completeness of the analysis for the Borel case is the first reason for the choice of the structure of the paper, presenting first this case, then the case of a general parabolic subgroup. The second reason for this choice is of a more prosaic nature and is linked to the complications in the geometry of these singular loci. Indeed, the singular locus consist of several submanifolds which are nested into one another. The smallest, contained in all the others, is precisely the locus of singular points over totally reducible spectral curves. Thus a good understanding of the singular locus requires as a first step a good understanding of the singular locus over totally reducible spectral curves.

This paper is organized as follows. Section \ref{sc Higgs moduli spaces} gives the necessary background on Higgs moduli spaces and the Hitchin system. In Subsection \ref{sc FM} we address the construction of the Poincar\'e sheaf over the moduli stack of torsion-free rank $1$ sheaves on nodal reduced curves. This construction is a natural generalization of that of \cite{arinkin2} and makes part of unpublished work of Arinkin and Pantev \cite{pantev}. The detailed description of this construction is included in Section \ref{sc FM} for the sake of completeness of our paper.

In Section \ref{sc totally reducible} we study the locus of singular Hitchin fibres associated to totally reducible spectral curves. We prove that the preimage of this locus under $h$ coincides with the locus of Higgs bundles whose structure group reduces to the Borel subgroup (Proposition \ref{pr Bor 1}) and describe the associated spectral data (Propositions \ref{pr description line bundles on V^nod} and \ref{pr Bor 2}).

We provide the construction of the $\BBB$-brane $\CCar(\Ll)$ in Section \ref{sect Cartan}. We consider the Cartan locus, $\Car$, given by those Higgs bundles whose structure group reduces to the Cartan subgroup $\C \cong \left(\CC^\times\right)^n<\GL(n,\CC)$. The Cartan locus is given by the image of $c:\Sym^n(\M_1) \hookrightarrow \M_n$, where $\M_1$ is the rank one Higgs moduli space. Also, we prove that the choice of a topologically trivial line $\Ll$ bundle on $X$ yields a hyperholomorphic bundle on $\Car$. This produces the $\BBB$-brane $\CCar(\Ll)$ (cf. Proposition \ref{prop Car}). Finally, we analyze the restriction of the brane $\CCar(\Ll)$ to a generic Hitcin fibre (Proposition \ref{pr intersection of the BBB with the Hitchin fibre}), which is crucial to study the behaviour of $\CCar(\Ll)$ under mirror symmetry. 

Section \ref{sc BAA brane} addresses the construction and description of the complex Lagrangian subvariety $\Uni(\Ll)$, supporting a $\BAA$-brane. $\Uni(\Ll)$ is defined as the subvariety of the locus of all the Higgs bundles reducing to the Borel subgroup $\B$ whose underlying vector bundle project to a certain $\C$-bundle determined by $\Ll$. Then, we prove that $\Uni(\Ll)$ is isotropic by gauge considerations, closed and half-dimensional, hence Lagrangian (Theorem \ref{tm uni Lagrangian}). We finish this section by studying the spectral data of the points of $\Uni(\Ll)$ in Proposition \ref{pr filtration for L-normalized}. 

We have at this point a description of the generic restriction of $\CCar(\Ll)$ and $\Uni(\Ll)$ to a generic Hitchin fibre. In this case, the generic Hitchin fibres are isomorphic to the coarse compactified Jacobian of reduced but reducible curves. We study in Section \ref{sc duality} the transformation of the first under a Fourier--Mukai integral functor. To deal with the lack of a Poincar\'e sheaf over coarse compactified Jacobians, we consider the Poincar\'e sheaf over the associated moduli stack that we reviewed in Section \ref{sc FM} and observe in Proposition \ref{pr meaning of Pp^Car} that its restriction to $\Car$ and the Jacobian provides a sheaf $\Pp^\Car$. It is then natural to study the behaviour of $\CCar(\Ll)$ under the Fourier--Mukai integral functor constructed with $\Pp^\Car$, which we do. We obtain that the generic restriction of $\CCar(\Ll)$ to a Hitchin fibre is sent to a sheaf over $\Uni(\Ll)$ (Corollary \ref{co support of the dual brane in the Hitchin fibre}). This lead us to conjecture that the $\BBB$-brane $\CCar(\Ll)$ is dual under mirror symmetry to a $\BAA$-brane supported on $\Uni(\Ll)$.  

In Section \ref{sc singular locus branes} we adapt the above results to arbitrary parabolic subgroups. Given a partition $n=r_1+\dots+r_s$ we consider the associated parabolic subgroup $\P_{\ol{r}}<\GL(n,\CC)$ with Levi subgroup $\L_{\ol{r}}<\P_{\ol{r}}$. In Section \ref{sc Levi BBB} we consider the subvariety $\Mr$ of $\M_n$, consisting  of Higgs bundles whose structure group reduces to $\L_{\ol{r}}$, and describe the intersection with generic Hitchin fibers (Proposition \ref{prop spectral data levi}). The variety $\Mr$ is a complex subscheme for $\Gamma_1$, $\Gamma_2$ and $\Gamma_3$, hence the support of a $\BBB$-brane. By varying the partition $\ol{r}$, we produce families of branes covering the strictly semistable locus of $\M_n$. On the other hand, in Section \ref{sect BAA para} we consider $\Uni^{\ol{r}}(E_1,\dots,E_s)$, consisting of Higgs bundles with structure group reducing to $\P_{\ol{r}}$ and fixed associated graded bundle $\bigoplus_{i=1}^s E_i$. We prove that under the right conditions on $\ol{E}$, this is a Lagrangian submanifold (Theorem \ref{thm Unir lagrangian}), and so a choice of flat bundle on it produces a $\BAA$-brane. The imposed hypotheses are related to the existence of a hyperholomorphic bundle on the hypothetical dual $\Mr$ (see Remark \ref{rk assumption reasonable}). A look at the spectral data of both $\Mr$ and $\Uni^{\ol{r}}(\ol{E})$, as well as the comparison with the case $\P_{(1,\dots,1)}$, indicates the existence of a duality. 

\

\noindent{\it Acknowledgements.}
We would like to thank P. Gothen, M. Jardim,  A. Oliveira and C. Pauly for their kind support and inspiring conversations. Many thanks to J. Heinloth for reading a preliminary version of this paper and pointing out some mistakes. We are indebted to A. Wienhard, whose support and hospitality made this project possible.

\section{Preliminaries}\label{sect preliminaries}


\subsection{Higgs bundles and their moduli}
\label{sc Higgs moduli spaces}

Let $X$ be a smooth projective curve over $\CC$. A Higgs bundle over $X$ is a pair $(E, \varphi)$ given by a holomorphic vector bundle $E$ over $X$ and a Higgs field $\varphi \in H^0(X, \End(E) \otimes K)$, which is a holomorphic section of the endomorphisms bundle twisted by the canonical bundle $K$ of $X$
 \cite{hitchin-self, Si, simpson1, simpson2}. 

A Higgs bundle $(E,\Phi)$ of trivial degree is stable ({\it resp.} semistable) if every $\Phi$-invariant subbundle $F \subset E$ has negative ({\it resp.} non-positive) degree, and it is polystable if it is semistable and decomposes as a direct sum of stable Higgs bundles. The moduli space of rank $n$ and degree $0$ semistable Higgs bundles on $X$ was constructed in \cite{hitchin-self, simpson1, simpson2, Nitin}. We review this construction in the following paragraphs. 

Fix a topological bundle $\EE$ of degree $0$ on $X$ and consider the space $\Aa$ of holomorphic structures 
on $\EE$. This is an affine space modelled on $\Omega^{0,1}(X,\ad(\EE))$ whose cotangent bundle is
$$
T^* \! \Aa=\Aa\times \Omega^{0}(X,\ad(\EE)\otimes K),
$$
where we have identified $\ad(\EE)$ and its dual by means of the Killing form (rather, a non degenerate extension of it to the center, to which we will henceforth refer as Killing form). Given a Hermitian metric $h$ on $\EE$ let us denote its Chern connection by $\nabla_h$. We consider the following conditions for pairs:
\begin{enumerate}
  \item \label{it Hitchin eq} exists a Hermitian metric $h$ such that $\nabla_h^2+[\varphi,\varphi^{*_h}]=0$, 
  \item \label{it holomorphicity} $\ol{\partial}_A(\varphi)=0,$
  \item \label{it dual holomorphicity} $\partial_{A,h}(\varphi^{*,h})=0.$
\end{enumerate}
Observe that condition \eqref{it holomorphicity} implies that the pair determines a Higgs bundle and in that case \eqref{it dual holomorphicity} is automatically satisfied for any choice of metric $h$. We shall denote by $(T^*\Aa)_H$ the subset of solutions to \eqref{it holomorphicity} (and, therefore, to \eqref{it dual holomorphicity}). Condition \eqref{it Hitchin eq} is known as the Hitchin equation and it follows from \cite{hitchin-self, simpson1, simpson2} that a Higgs bundle is polystable if and only if \eqref{it Hitchin eq} holds, so we will write $(T^*\!\Aa)^\pst_H$ for the locus of pairs satisfying simultaneously \eqref{it Hitchin eq} and \eqref{it holomorphicity} (hence \eqref{it dual holomorphicity} as well). Note that we have $(T^*\! \Aa)^\st_H \subset (T^*\! \Aa)^\pst_H \subset (T^*\! \Aa)_H^\sst$, where $\st$ and $\sst$ stand for stable and semistable Higgs bundles. These loci are all preserved by the action of the complex gauge group,
$$
\mc{G}=\Omega^0(X,\mathrm{Aut}(\EE)),
$$
and $(T^*\! \Aa)_H^\sst$ and $(T^* \! \Aa)_H^{\pst}$ classify semistable and closed orbits, respectively. The moduli space of semistable Higgs bundles over $X$ of rank $n$ and trivial degree is identified with  
\begin{equation} \label{eq def M_n}
\M_n\cong (T^* \! \Aa)_H/\!\!/\mc{G}=(T^* \! \Aa)^{\pst}_H/\mc{G},
\end{equation}
where the double quotient denotes the GIT quotient. This is a quasi-projective variety of dimension 
\begin{equation} \label{eq dim M_n}
\dim \M_n = 2n^2(g-1) + 2,
\end{equation}
whose points represent isomorphism classes of polystable Higgs bundles and the smooth locus is given by the locus of stable Higgs bundles \cite{simpson2}. The geometry of $\M_n$ is surprisingly rich. In particular, it can be equipped with a hyperk\"ahler structure and becomes an integrable system by means of the Hitchin fibration.

We shall first study the hyperk\"ahler structure of $\M_n$. Let us fix a particular Hermitian metric $h_0$ on the topological bundle $\EE$, this choice determines a Hermitian metric $\eta$ on $T^*\Aa$. Let 
$$
\mc{G}_0=\Omega^0(X,\mathrm{Aut}(\EE, h_0)),
$$
be the {\it unitary gauge group} of automorphisms of $\EE$ preserving the metric $h_0$. We can see that $\eta$ is preserved by $\mc{G}_0$. Also, one can naturally define three complex structures $\widetilde{\Gamma}_1$, $\widetilde{\Gamma}_2$ and $\widetilde{\Gamma}_3$ on $T^* \! \Aa$ satisfying the quaternionic relations, together with a hyperk\"ahler metric preserved by $\mc{G}_0$. This action defines a moment map $\mu_i$ associated to each of the complex structures $\widetilde{\Gamma}_i$, and one can see that $\eta$ is hyperk\"ahler  with respect to them. One can see that the vanishing of $\mu_1$ coincides with equation \eqref{it Hitchin eq}, the vanishing of $\mu_2$ with \eqref{it holomorphicity} and the vanishing of $\mu_3$ with \eqref{it dual holomorphicity}. Therefore, the moduli space of Higgs bundles is identified with the hyperholomorphic quotient,
$$
\M_n \cong \Quotient{\mu_1^{-1}(0) \cap \mu_2^{-1}(0) \cap \mu_3^{-1}(0)}{\Gg_0},
$$
as it follows from \cite{hitchin-self, simpson1, simpson2}. The complex structures $\widetilde{\Gamma}_i$ descend to complex structures $\Gamma_i$ in the quotient and so does the hyperk\"ahler metric $\eta$, defining a hyperk\"ahler structure on $\M_n$. Observe that natural the complex structure in $\M_n$ obtained by the identification \eqref{eq def M_n} coincides with $\Gamma_1$. Additionally, \cite{donaldson, corlette} proved that the moduli space of rank $n$ flat connections on the $C^\infty$ vector bundle $\EE$ over $X$ of degree $0$ is isomorphic to the above hyperk\"ahler quotient equipped with the complex structure $\Gamma_2$.

The hyperk\"ahler structure defined on $\M_n$ induces a holomorphic $2$-form $\Omega_1 = \omega_2 + \mathrm{i} \omega_3$ on $\M_n$, where $\omega_2$ and $\omega_3$ are the K\"ahler forms associated to $\Gamma_2$ and $\Gamma_3$. We next give the expression of $\Omega_1$ by means of the gauge theoretic construction of $\M_n$. 
Let $(\partial_A,\varphi)\in (T^* \! \Aa)^{\st}_H$, and consider two tangent vectors 
$$
(\dot{A}_i,\dot{\varphi}_i)\in T_{(\partial_A,\varphi)}T^* \! \Aa \qquad i=1,2
$$ 
we have
\begin{equation}\label{eq Omega1}
\Omega_1\left((\dot{A}_1,\dot{\varphi}_1),(\dot{A}_2,\dot{\varphi}_2)\right)=\int_X \dot{A}_1\dot{\wedge}\dot{\varphi}_2-\dot{A}_2\dot{\wedge}\dot{\varphi}_1.
\end{equation}
where to define the wedge product $\dot{\wedge}$, we identity $\Omega^{0,1}(X,\ad(\EE))\cong (\Omega^{0}(X,\ad(\EE)) \otimes \Omega^{0,1}_X)$
and $\Omega^{0}(\ad(\EE)\otimes K)\cong (\Omega^{0}(\ad(\EE) \otimes \Omega^{1,0}_X)$,
and for $Z_i\otimes \omega_i$, $i=1,2$, $Z_i\in\Omega^0(X,\ad(\EE))$, $\omega_i\in \Omega^1(X)$, we set 
$$
(Z_1\otimes \omega_1)\dot{\wedge}(Z_2\otimes \omega_2)=\langle Z_1,Z_2\rangle\otimes\omega_1\wedge\omega_2
$$
with $\langle \ ,\ \rangle$ being the Killing form.


We recall now the Hitchin fibration and spectral construction given in \cite{hitchin_duke, BNR}. Let $(q_1, \dots, q_n)$ be a base of the algebra
$\CC[\mathfrak{gl}(n,\CC)]^{\GL(n,\CC)}$ of regular functions on $\mathfrak{gl}(n,\CC)$  invariant under the adjoint action of $\GL(n,\CC)$. We choose them so that $\deg(q_i) = i$. The Hitchin map is defined by
$$
\morph{\M_n}{\H := \bigoplus_{i = 1}^n H^0(X,K^i)}{(E,\varphi)}{\left ( q_1(\varphi), \dots, q_n(\varphi) \right ).}{}{h}
$$
It is a surjective proper morphism \cite{hitchin_duke,Nitin} endowing the moduli space with the structure of an algebraically completely integrable system. In particular, its generic fibers are abelian varieties and every fiber is a compactified Jacobian \cite{simpson2,Schaub}. To describe these, consider the total space $|K|$ of the canonical bundle and the obvious algebraic surjection $\pi: |K| \to X$. We note that the pullback bundle $\pi^*K \to |K|$ admits a tautological section $\lambda$. Given an element $b \in \H$, with $b = (b_1, \dots, b_n)$, we construct the {\it spectral curve} $\ol{X}_b \subset |K|$ by considering the vanishing locus of the section of $\pi^*K^n$
\begin{equation} \label{eq equation spectral curve}
\lambda^n + \pi^*b_1 \lambda^{n-1} + \dots + \pi^*b_{n-1} \lambda + \pi^*b_n.
\end{equation}
The restriction of $\pi: |K| \to X$ to $X_b$ is a ramified degree $n$ cover that which by abuse of notation we also denote by
$$
\pi : \ol{X}_b \longrightarrow X.
$$
Since the canonical divisor of the symplectic surface $|K|$ is zero and $\ol{X}_b$ belongs to the linear system $|nK|$, one can compute the arithmetic genus of $\ol{X}_b$,
\begin{equation} \label{eq genus of the spectral curve}
g \left (\ol{X}_b \right) = 1 + n^2(g-1).
\end{equation}
By Riemann-Roch, the rank $n$ bundle $\pi_*\Oo_{\ol{X}_b}$ is has degree
$$
\deg(\pi_*\Oo_{\ol{X}_b}) = - (n^2 - n)(g-1).
$$
Given a torsion-free rank one sheaf $\Ff$ over $\ol{X}_b$ of degree $\delta$, where 
\begin{equation}\label{eq delta}
\delta := n(n - 1)(g-1),
\end{equation}
we have that $E_\Ff := \pi_*\Ff$ is a vector bundle on $X$ of rank $n$ and degree $0$. Since $\pi$ is an affine morphism, the natural $\Oo_{|K|}$-module structure on $\Ff$, given by understanding $\Ff$ as a sheaf supported on $|K|$, corresponds to a $\pi_*\Oo_{|K|} = \Sym^\bullet(K^*)$-module structure on $E_\Ff$. Such structure on $E_\Ff$ is equivalent to a Higgs field
\begin{equation} \label{eq Higgs field}
\varphi_\Ff : E_\Ff \longrightarrow E_\Ff \otimes K.
\end{equation}
As expected, one has that
$$
h \left ( (E_\Ff, \varphi_\Ff) \right ) = b.
$$

A stability notion may be defined for a torsion-free sheaf $\Ff$ of rank one on the curve $\ol{X}_b$. If $\ol{X}_b$ is reduced and irreducible (integral) then $\Ff$ is automatically stable. For reduced but reducible curves, \cite[Th\'eor\`eme 3.1]{Schaub} gives an easy characterization of semistability, modulo some corrections pointed out in \cite[Remark 4.2]{CL} and \cite[Section 2.4]{dC}. A torsion-free rank one sheaf $\Ff$ on $\ol{X}_b$ of degree $\delta$ is stable (resp. semi-stable) if and only if for every closed sub-scheme $Z \subset \ol{X}_b$ pure of dimension one has that
\begin{equation} \label{eq Schaub}
\deg_Z \Ff_Z \, > \, (n_Z^2-n_Z )(g - 1) \quad \textnormal{(resp. $\geq$),} 
\end{equation}
where $\Ff_Z := \Ff|_Z/\Tor(\Ff|_Z)$ and $n_Z = \rk(\pi_* \mathcal{O}_Z)$. One can easily check that every line bundle is stable so the Jacobian $\Jac^\delta(\ol{X}_b)$ is contained inside the moduli space of semistable torsion free rank $1$ degree $\delta$ sheaves on $\ol{X}_b$. Furthermore, the former is projective (see \cite{simpson1}) what explains that we refer to it as the {\it compactified Jacobian} and denote it by $\ol{\Jac}^{\, \delta}(\ol{X}_b)$.

The previous construction provides a one-to-one correspondence between rank $1$ torsion-free sheaves over a certain spectral curve and Higgs bundles over the corresponding point of the Hitchin base. Furthermore, stability is preserved under such correspondence. 

\begin{theorem}[\cite{simpson2, Schaub}]\label{thm spectral corresp}
\label{tm hitchin fibre = Pic}
A torsion-free rank one sheaf $\Ff$ on the spectral curve $\ol{X}_b$ is stable (resp. semistable, polystable) if and only if the corresponding Higgs bundle $(E_\Ff, \varphi_\Ff)$ on $X$ is stable (resp. semistable, polystable). Hence, the Hitchin fibre over $b \in \H$ is isomorphic to the moduli space of semistable torsion-free rank one sheaves of degree $\delta = (n^2 - n)(g-1)$ over $\ol{X}_b$, 
$$
h^{-1}(b) \cong \overline{\Jac}^{\,\, \delta}\left( \ol{X}_b \right).
$$
\end{theorem}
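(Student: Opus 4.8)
The plan is to upgrade the set\-theoretic bijection between torsion\-free rank one sheaves $\Ff$ on $\ol{X}_b$ and Higgs bundles $(E_\Ff,\varphi_\Ff)$ with $h(E_\Ff,\varphi_\Ff)=b$ --- which, as recalled above, follows from the affineness of $\pi$ together with the Cayley--Hamilton theorem --- to a statement about subobjects and Hilbert polynomials. The first step is to observe that, since $\pi$ is affine, $\pi_*$ is an exact equivalence between quasi\-coherent $\Oo_{\ol{X}_b}$-modules and quasi\-coherent $\pi_*\Oo_{\ol{X}_b}$-modules on $X$; under this equivalence an $\Oo_{\ol{X}_b}$-submodule $\Ff'\subseteq\Ff$ corresponds exactly to a $\pi_*\Oo_{\ol{X}_b}$-submodule of $E_\Ff$, that is, to a $\varphi$-invariant $\Oo_X$-subsheaf $E'=\pi_*\Ff'\subseteq E_\Ff$, and conversely. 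One checks that $\Ff'$, being a subsheaf of the pure sheaf $\Ff$, is again torsion\-free, and that $\Ff'=0$ (resp.\ $\Ff'=\Ff$) if and only if $E'=0$ (resp.\ $E'=E_\Ff$), so proper nonzero subobjects match up on the two sides. Replacing $E'$ by its saturation (which is still $\varphi$-invariant and has slope $\geq\mu(E')$) one may moreover take these to be subbundles.

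The second step compares stability data. Fix an ample $\Oo_X(1)$ on $X$; since $\pi$ is finite, $L:=\pi^*\Oo_X(1)$ is ample on $\ol{X}_b$, and it is with respect to $L$ that the compactified Jacobian $\ol{\Jac}^{\,\delta}(\ol{X}_b)$ is formed (Simpson stability of pure dimension one sheaves). For any coherent $\Gg$ on $\ol{X}_b$ the projection formula together with $R^{>0}\pi_*=0$ gives
\begin{equation*}
\chi\!\left(\ol{X}_b,\Gg\otimes L^{m}\right)=\chi\!\left(X,(\pi_*\Gg)\otimes\Oo_X(m)\right)\qquad\text{for all }m,
\end{equation*}
so the Hilbert polynomials of $\Gg$ and of $\pi_*\Gg$ coincide; in particular so do their leading coefficients and hence their reduced Hilbert polynomials. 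Applying this to $\Ff$, to $\Ff'$ and to their pushforwards, and recalling from the degree computation above that $E_\Ff=\pi_*\Ff$ has rank $n$ and degree $0$ when $\deg\Ff=\delta$, the comparison of reduced Hilbert polynomials $p_{\Ff'}$ versus $p_{\Ff}$ becomes literally the same numerical inequality as the comparison of slopes $\mu(E')$ versus $\mu(E_\Ff)=0$.

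Combining the two steps: $\Ff$ is stable (resp.\ semistable) iff every proper nonzero $\Ff'\subsetneq\Ff$ satisfies $p_{\Ff'}<p_\Ff$ (resp.\ $\le$), iff every proper nonzero $\varphi$-invariant $E'\subsetneq E_\Ff$ satisfies $\mu(E')<0$ (resp.\ $\le 0$), which is exactly (semi)stability of $(E_\Ff,\varphi_\Ff)$ as a Higgs bundle of slope $0$. For the polystable case one uses in addition that the equivalence $\Ff\leftrightarrow(E_\Ff,\varphi_\Ff)$ is additive, so it carries a decomposition of $\Ff$ into stable sheaves of the same reduced Hilbert polynomial into a decomposition of $(E_\Ff,\varphi_\Ff)$ into stable Higgs subbundles of slope $0$, and vice versa. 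Finally, to obtain an isomorphism of moduli spaces and not merely a bijection on points, one carries out the same pushforward construction in families over an arbitrary base $S$ --- an $S$-flat family of rank one torsion\-free sheaves on $\ol{X}_b$ pushes forward to a family of slope $0$ Higgs bundles on $X$ with Hitchin image $b$, and conversely --- and then invokes the universal properties of the two GIT moduli spaces (Simpson's for sheaves on $\ol{X}_b$, and $h^{-1}(b)\subset\M_n$ for Higgs bundles).

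I expect the main point requiring care to be the bookkeeping around purity and polarizations on the possibly reducible, non\-reduced curve $\ol{X}_b$: one must know that the Simpson stability defining $\ol{\Jac}^{\,\delta}(\ol{X}_b)$ is taken with respect to $\pi^*\Oo_X(1)$ (so that Hilbert polynomials match on the nose rather than only up to a polarization\-dependent term), and that pushforward and its inverse preserve torsion\-freeness and neither create nor destroy destabilizing subobjects. Once those points are fixed the argument is essentially formal, the only genuinely geometric input --- the shape of $\pi_*\Oo_{\ol{X}_b}$ and the degree shift $\delta=(n^2-n)(g-1)$ --- having already been recorded above.
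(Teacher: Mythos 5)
The paper does not prove this theorem; it is quoted from Simpson and Schaub, so there is no in-text argument to compare against. Your sketch is the standard proof from those references and is essentially correct: the affineness of $\pi$ gives the exact equivalence between $\Oo_{\ol{X}_b}$-modules and $\varphi$-invariant subsheaves, and the identity $\chi(\ol{X}_b,\Gg\otimes\pi^*\Oo_X(m))=\chi(X,\pi_*\Gg\otimes\Oo_X(m))$ reduces Simpson (semi)stability with respect to $\pi^*\Oo_X(1)$ to slope (semi)stability of the degree-$0$ Higgs bundle, with polystability and the family-level statement following as you indicate. The one point you gloss over is the surjectivity input hidden in the phrase ``set-theoretic bijection'': for reducible or non-reduced $\ol{X}_b$ one must know that every semistable Higgs bundle with characteristic polynomial exactly $b$ corresponds to a sheaf of multirank $(1,\dots,1)$ (rather than, say, rank $2$ on one component and $0$ on another), which is precisely the subtlety behind the corrections of Chaudouard--Laumon and de Cataldo to Schaub that the paper flags immediately after the theorem; since you take that bijection as already established ``above,'' your argument is consistent with the paper's setup.
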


For the case of trivial degree, one can construct a section of the Hitchin fibration, named \emph{Hitchin section}, associated to any line bundle $\Jj \in \Jac^{\, \delta/n}(X)$. This section is constructed by assigning to each $b\in B$ the Higgs bundle whose spectral data is the line bundle $\pi^* \Jj$ over the spectral curve $\ol{X}_b$. In other words, we have a morphism
\begin{equation}\label{eq Hitchin section}
\morph{\H_n}{\M_n}{b}{(E_{(\Jj,b)}:= \pi_{*}\pi^{\ast}\Jj,\varphi_{(\Jj,b)}),}{}{\Sigma_{\Jj}}
\end{equation}
where $\varphi_{(\Jj,b)} = \varphi_{E_{(\Jj,b)}}$ as defined in \eqref{eq Higgs field}. One can check that the push-forward of the trivial sheaf of any spectral curve is $\bigoplus_{i = 0}^{n-1} K^{-i}$, applying the projection formula one has
\begin{equation} \label{eq description of $ of Hitchin section}
E_{(\Jj,b)} \cong \Jj \otimes \pi_* \Oo_{\ol{X}_b} \cong \Jj \otimes \left ( \bigoplus_{i = 0}^{n-1} K^{-i} \right ).
\end{equation}
for all $b \in \H_n$.

When studying mirror symmetry beyond the generic locus, one is quickly brought to considering the moduli stack of Higgs bundles. We thus  finish this section with some elements about the geometry of the moduli stack $\MMm_n$ of Higgs bundles of rank $n$ and trivial degree over the smooth projective curve $X$, and its relation with the moduli space $\M_n$.

Let us recall that the stack $\MMm_n$ contains an open set $\MMm_n^{\sst}$ of semistable objects.

\begin{theorem}{\cite{AHH}}\label{thm:stack_Higgs}
The moduli space $\M_n$ is a good moduli space  for $\MMm_n^{\sst}$ in the sense of \cite{Alper}. That is,  there exists
a quasi-compact morphism 
$$
\Psi: \MMm_n^{\sst}\longrightarrow\M_n
$$
such that the pushforward functor is exact and induces an isomorphism of sheaves $\Psi_*\Oo_{\MMm}\cong\Oo_{\M}$.
\end{theorem}
The notion of a good moduli space recovers the usual properties of good  quotients of finite dimensional varieties by group actions \cite{SeshadriQuot, Newstead}. In particular, $\Psi$ is surjective and universally closed, and  $\M_n$ has the quotient-topology. 

The proof of Theorem \ref{thm:stack_Higgs} combines a number of results: Alper proves that the stack of bundles has a good moduli space \cite[Theorem 13.6]{Alper}. In \cite[\S 1.F]{Heinloth}, Heinloth explained how the classical stability notion for bundles can be seen in terms of $\Theta$-stability (notion developed also independently by Halpern-Leistner \cite{HalpernInst}). As explained in \cite[\S 6]{AHH}, one may deduce a similar result for Higgs bundles, so $\MMm_n^{\sst}$ are Hilbert-Mumford semistable objects for a suitable line bundle.  Theorem C in {\it loc.cit.} implies the  existence of a good moduli space for $\MMm_n^{\sst}$.

\subsection{Arinkin's Poincar\'e sheaf and Fourier--Mukai transform}
\label{sc FM}

Arinkin constructed a Poincar\'e sheaf \cite{arinkin2} on the compactified Jacobian of an integral curve with planar singularities, yielding a Fourier--Mukai transform between these spaces and their duals. This was generalized by Melo, Rapagnetta and Viviani \cite{melo1,melo2} to any fine compactified Jacobian of a reduced curve. The universal sheaf for the fine compactified Jacobian is a crucial piece in Arinkin's construction and, because of this, no Poincar\'e sheaf has been constructed for coarse compactified Jacobians which is the situation that concern us in this paper. Nevertheless, Arinkin's methods adapt naturally to moduli stacks as we will review in this section. The construction of a Poincar\'e sheaf over the moduli stack of torsion-free rank $1$ sheaves over a reducible planar curve makes part of unpublished work by Arinkin and Pantev \cite{pantev} where they conjecture that the associated Fourier--Mukai transform gives rise to self-duality of the moduli stack. A sketch of the construction appears in the preprints \cite{maoli1} and \cite{maoli2}. 

Here we restrict to the case of nodal curves. We do so because for these curves the construction of the Poincar\'e sheaf is considerably simpler than in the case of an arbitrary reducible curve (see \cite[Section 4.3]{arinkin2}).

Let $\overline{X}$ be a connected reduced curve with at most nodal singularities and pick an ample line bundle $\Oo_{\overline{X}}(1)$ on it. Let $\overline{\Jjac}^{\, \delta}(\ol{X})$ be the moduli stack of rank $1$ torsion-free sheaves over $\overline{X}$ and denote by $\UUu \to \ol{X} \times \overline{\Jjac}^{\, \delta}(\ol{X})$ the associated universal sheaf. Denote also by $\Jjac^\delta(\overline{X})$ the substack of those sheaves that are invertible ({\it i.e.} line bundles), 
and by $\UUu^0 \to \overline{X} \times \Jjac^\delta(\overline{X})$ the restriction of the universal bundle to it. 

Recall that the Hilbert scheme is a fine moduli space represented by a universal subscheme $\Zz_N \subset \overline{X} \times \Hilb^N(\overline{X})$. Write $\Ii_Z$ for the ideal sheaf associated to the zero dimensional subscheme $Z \subset \ol{X}$ and $\Ii_{\Zz_N} \to \overline{X} \times \Hilb^N(\overline{X})$ for the ideal sheaf associated to the universal subscheme. Since $\ol{X}$ is a nodal curve, we have that $\Ii_Z^\vee$ is a torsion-free sheaf. One can use the universal subscheme $\Zz_m := \Zz_{N_m}$ to construct the associated {\it Abel-Jacobi map} 
\[
\morph{\Hilb^{N_m}(\overline{X})}{\overline{\Jjac}^{\delta}(\overline{X})}{Z}{\Ii_Z^\vee \otimes \Oo_{\overline{X}}(-m),}{}{\alpha_m}
\]
where $N_m = m \deg \Oo_{\overline{X}}(1) + \delta$. Note that $\alpha_m$ is given by 
\begin{equation} \label{eq definition of Uu_m}
\Ii_{\Zz_m}^\vee \otimes q_m^*\Oo_{\overline{X}}(-m) \to \overline{X} \times \Hilb^{N_m}(\overline{X}), 
\end{equation}
where $q_m$ denotes the projection $\overline{X} \times \Hilb^{N_m}(\overline{X}) \to \overline{X}$. Denote by $\Hilb^{N_m}(\overline{X})'$ the open subset of $\Hilb^{N_m}(\overline{X})$ given by those zero dimensional subschemes $Z \subset S$ that can be embedded in a smooth curve. Define $W_m$ to be the open subset of $\Hilb^{N_m}(\overline{X})'$ given by those subschemes $Z$ whose ideal sheaf $\Ii_Z$ satisfies the condition $H^1(\overline{X}, \Ii_Z^\vee) = 0$. For any positive integer $r$, we set $W^r := \bigsqcup_{m=r}^{\infty} W_m$ and $\alpha^r := \prod_{m=r}^{\infty} \alpha_{m}|_{W_m}$.

The following is well known although it appears in the literature \cite{altman&kleiman, arinkin2, melo0} in different forms than how we present it here. 

\begin{proposition} \label{pr abel-jacobi atlas}
Let $\overline{X}$ be a connected reduced curve with at most nodal singularities. For any $r$, the Abel--Jacobi map induces a smooth atlas 
\[
\alpha^r : W^r \to \overline{\Jjac}^{\delta}(\overline{X})
\]
for the Artin stack $\overline{\Jjac}^{\delta}(\overline{X})$. Using this atlas, the universal sheaf is $\{ \Uu_m \to \overline{X} \times W_m \}_{m = r}^\infty$ where the $\Uu_m$ are given by restricting the sheaves \eqref{eq definition of Uu_m} to $\overline{X} \times W_m$. 
\end{proposition}


Now we construct the Poincaré bundle over the product $\overline{\Jjac}^\delta(\overline{X}) \times \Jjac^\delta(\overline{X})$. Given a flat morphism $f : Y \to S$ whose geometric fibres are curves, for any $S$-flat sheaf $\Ee$ on $Y$, we can construct the determinant of cohomology $\Dd_f(\Ee)$ (see for instance (see \cite{knudsen&mumford} and \cite[Section 6.1]{esteves})), which is an invertible sheaf on $S$ constructed locally as the determinant of complexes of free sheaves locally quasi-isomorphic to $Rf_*\Ee$. Consider the triple product $\ol{X} \times \overline{\Jjac}^{\, \delta}(\ol{X}) \times \Jjac^\delta(\ol{X})$ and denote by $f_{ij}$ the projection to the product of the $i$-th and $j$-th factors. We define the Poincar\'e bundle $\PPp \to \overline{\Jjac}^{\, \delta}(\ol{X}) \times \Jjac^\delta(\ol{X})$ as the invertible sheaf
\begin{equation} \label{eq definition Poincare bundle}
\PPp = \Dd_{f_{23}} \left ( f_{12}^*\UUu \otimes f_{13}^* \UUu^0  \right ) \otimes \Dd_{f_{23}} \left ( f_{13}^* \UUu^0  \right )^{-1} \otimes \Dd_{f_{23}} \left ( f_{12}^* \UUu  \right )^{-1}.
\end{equation}
Given a degree $\delta$ line bundle $J$ over $\ol{X}$, denote by $\PPp_J := \PPp|_{\overline{\Jjac}^{\, \delta}(\ol{X}) \times \{ J \}}$ the restriction of $\PPp$ to the slice corresponding to $J$. In fact, if we consider the obvious projections $f_1 : \ol{X} \times \overline{\Jjac}^{\, \delta}(\ol{X}) \to \ol{X}$ and $f_2 : \ol{X} \times \overline{\Jjac}^{\, \delta}(\ol{X}) \to \overline{\Jjac}^{\, \delta}(\ol{X})$, one has (see \cite[Lemma 5.1]{melo1} for instance) that 
\begin{equation} \label{eq description of Pp_J}
\PPp_J = \Dd_{f_2} (\UUu \otimes f_1^*J) \otimes \Dd_{f_2}(f_1^*J)^{-1} \otimes \Dd_{f_2}(\UUu)^{-1}.
\end{equation}

\begin{remark}
If $\overline{X}$ is a smooth irreducible curve, rank $1$ torsion free sheaves over it are simple line bundles so 
\[
\overline{\Jjac}^{\delta}(\overline{X}) \cong \Jjac^{\delta}(\overline{X}) \cong \left [ \quotient{\Jac^{\, \delta}(\overline{X})}{\CC^*}\right ],
\]
and $\PPp$ pulls-back to a bundle $\Pp \to \Jac^\delta(\overline{X}) \times \Jac^\delta(\overline{X})$ under the projection $\Jac^{\, \delta}(\overline{X}) \to [\Jac^{\, \delta}(\overline{X})/\CC^*]$. The integral functor associated to $\Pp$ is a derived equivalence of categories \cite{mukai}, the Fourier--Mukai transform. 
\end{remark}

One can reverse the roles of $\Jjac^\delta(\overline{X})$ and $\overline{\Jjac}^\delta(\overline{X})$ in \eqref{eq definition Poincare bundle} to obtain a Poincar\'e bundle over $\Jjac^\delta(\overline{X}) \times \overline{\Jjac}^\delta(\overline{X})$ which coincides with the one defined in \eqref{eq definition Poincare bundle} over $\Jjac^\delta(\overline{X}) \times \Jjac^\delta(\overline{X})$. We then see that the Poincar\'e bundle extends naturally to a bundle over
\[
\left ( \overline{\Jjac}^\delta(\overline{X}) \times \overline{\Jjac}^\delta(\overline{X}) \right )^\sharp := \left ( \overline{\Jjac}^\delta(\overline{X}) \times \Jjac^\delta(\overline{X}) \right ) \cup \left ( \Jjac^\delta(\overline{X}) \times \overline{\Jjac}^\delta(\overline{X}) \right )
\]
that we denote by $\PPp^\sharp$. Following \cite{arinkin2}, it is possible to extend $\PPp^\sharp$ even further to a Cohen--Macaulay sheaf over  $\overline{\Jjac}^{\, \delta}(\ol{X}) \times \overline{\Jjac}^{\, \delta}(\ol{X})$, as we will see below.

First we need some definitions. Consider the projection to the Hilbert scheme of its associated universal scheme $h_m : \Zz_{m} \to \Hilb^{N_m}(\overline{X})$, the coherent sheaf of algebras $\Aa_m := h_{m,*}\Oo_{\Zz_{m}}$ over $\Hilb^{N_m}(\overline{X})$ and denote by $\Aa_m^*$ the subsheaf of invertible elements. Consider $p_1$ to be the projection of $\Hilb^{N_m}(\overline{X}) \times \overline{\Jjac}^{\, \delta}(\overline{X})$ to the first factor and take the pull-back $p_1^{-1}\Aa_m^*$. Given a sheaf, we use the subindex ${p_1^{-1}(\Aa_m^*)}$ to denote the maximal quotient of the sheaf where $p_1^{-1}(\Aa_m^*)$ acts via the norm character.

Consider also the triple product $\overline{X} \times W_m \times \overline{\Jjac}^{\, \delta}(\overline{X})$ and denote by $g_{ij}$ the projections to the $i$-th and $j$-th factors.  Following \cite{arinkin2}, we define the sheaf over $W_m \times \overline{\Jjac}^{\, \delta}(\overline{X})$
\begin{equation} \label{def Poincare sheaf}
\overline{\PPp}_m := \left ( \bigwedge^{N_m} g_{23,*} ( g_{12}^*\Oo_{\Zz_m} \otimes g_{13}^*\UUu ) \right)_{p_1^{-1}(\Aa_m^*)} \otimes  \left ( \bigwedge^{N_m} g_{23,*} (g_{12}^* \Oo_{\Zz_m}) \right)^{-1}.
\end{equation}
The following is an inmediate adaptation of \cite{arinkin2}.

\begin{proposition} \label{pr Pp is CM}
The sheaves $\overline{\PPp}_m \to W_m \times \overline{\Jjac}^{\, \delta}(\overline{X})$ are Cohen--Macaulay and flat over $\overline{\Jjac}^{\, \delta}(\overline{X})$ for all positive integer $m$.
\end{proposition}

\begin{proof}
Up to a base change, the construction of \eqref{def Poincare sheaf} coincides with Arinkin's definition of the sheaf $Q'$ after making the substitution of the fine compactified Jacobian (of an integral curve) and its universal sheaf by the moduli stack of torsion free sheaves (on a nodal cuve) and its associated universal sheaf. After the same substitution, one can also adapt Arinkin's construction of another sheaf $Q$ which he shows to be isomorphic to $Q'$ in \cite[Proposition 4.5]{arinkin2}. The proof of \cite[Proposition 4.5]{arinkin2} relies entirely on a result \cite[Lemma 3.6]{arinkin2} concerning isospectral Hilbert schemes of surfaces, so \cite[Proposition 4.5]{arinkin2} extends to our case and both constructions coincide here as well. Using the construction of $\overline{\PPp}_m$ associated to $Q$ and \cite[Lemma 2.1 and Proposition 4.2]{arinkin2}, we have that $\overline{\PPp}_m$ is a Cohen--Macaulay sheaf, flat over $\overline{\Jjac}^{\, \delta}(\overline{X})$. Note that \cite[Lemma 2.1]{arinkin2} is a statement for Cohen--Macaulay sheaves in general and \cite[Proposition 4.2]{arinkin2} works for any reduced curve and any rank $1$ torsion free sheaf on it, so both are valid in our case. 
\end{proof}

This construction recovers the Poincar\'e bundle.

\begin{proposition} \label{pr restriction of Poincare is Poincare}
$\PPp$ and $\overline{\PPp}_m|_{\W_m \times \Jjac^{\, \delta}(\overline{X})}$ are isomorphic up to the twisting by a line bundle over $\Jjac^{\, \delta}(\overline{X})$. 
\end{proposition}

\begin{proof}
Since the $\Uu_m$ are defined as (the restriction to $W_m \times \Hilb^{N_m}(\overline{X})$ of) \eqref{eq definition of Uu_m}, in terms of the Abel--Jacobi atlas from Proposition \ref{pr abel-jacobi atlas}, $\PPp$ reads
\[
\PPp \cong \Dd_{g_{23}}(g_{12}^* \Ii^\vee_{\Zz_m} \otimes g_{12}^*q_m^*\Oo_{\overline{X}}(-m) \otimes g_{13}^*\UUu_0) \otimes \Dd_{g_{23}}(g_{13}^*\UUu_0)^{-1} \otimes \Dd_{g_{23}}(g_{12}^* \Ii^\vee_{\Zz_m} \otimes g_{12}^*q_m^*\Oo_{\overline{X}}(-m))^{-1}.
\]

We recall that $W_m$ is a subset of those subschemes $Z$ such that the first cohomology space of its ideal sheaf is trivial, $H^1(\overline{X},\Ii_Z) = 0$. It then follows that $R^1g_{23,*} (g_{12}^* \Oo_{\Zz_m})$ vanishes and $R^0g_{23,*} (g_{12}^* \Oo_{\Zz_m})$ is locally free of rank $N_m$. Under these conditions, the second term in the tensorization of the right-hand side of \eqref{def Poincare sheaf} equals the determinant in cohomology, 
\[
\bigwedge^{N_m} g_{23,*} (g_{12}^* \Oo_{\Zz_m}) \cong \det R^0 g_{23,*} (g_{12}^* \Oo_{\Zz_m}) \cong \Dd_{g_{23}}(g_{12}^* \Oo_{\Zz_m}).
\]
Also, $g_{13}^*\UUu$ is a line bundle over $W_m \times \Jjac^{\, \delta}(\overline{X})$. This implies, for large $m$, that $R^1g_{23,*} (g_{12}^* \Oo_{\Zz_m} \otimes g_{13}^*\UUu)$ vanishes and $R^0g_{23,*} (g_{12}^* \Oo_{\Zz_m} \otimes g_{13}^*\UUu)$ is locally free of rank $N_m$. Then, 
\[
\bigwedge^{N_m} g_{23,*} (g_{12}^* \Oo_{\Zz_m} \otimes g_{13}^*\UUu_0) \cong \det R^0 g_{23,*} (g_{12}^* \Oo_{\Zz_m} \otimes g_{13}^*\UUu) \cong \Dd_{g_{23}}(g_{12}^* \Oo_{\Zz_m} \otimes g_{13}^*\UUu_0)
\]
is a line bundle on which $p_1^{-1}(\Aa_m^*)$ acts via the norm character. Therefore, we have seen that
\[
\overline{\PPp}_m|_{\W_m \times \Jjac^{\, \delta}(\overline{X})} \cong \Dd_{g_{23}}(g_{12}^* \Oo_{\Zz_m} \otimes g_{13}^*\UUu_0) \otimes \Dd_{g_{23}}(g_{12}^* \Oo_{\Zz_m})^{-1}.
\]

From the short exact sequence 
\[
0 \to g_{12}^*\Oo_{\overline{X} \times \Hilb^{N_m}(\overline{X})} \to g_{12}^*\Ii^\vee_{\Zz_m} \to g_{12}^*\Oo_{\Zz_m} \to 0,
\]
and the additivity property of the determinant in cohomology, one can deduce
\[
\Dd_{g_{23}}(g_{12}^* \Oo_{\Zz_m} \otimes g_{13}^*\UUu_0) \cong \Dd_{g_{23}}(g_{12}^* \Ii^\vee_{\Zz_m} \otimes g_{13}^*\UUu_0) \otimes \Dd_{g_{23}}(g_{13}^*\UUu_0)^{-1}
\]
and 
\[
\Dd_{g_{23}}(g_{12}^* \Oo_{\Zz_m}) \cong \Dd_{g_{23}}(g_{12}^* \Ii^\vee_{\Zz_m}).
\]
Therefore, 
\[
\overline{\PPp}_m|_{\W_m \times \Jjac^{\, \delta}(\overline{X})} \cong \Dd_{g_{23}}(g_{12}^* \Ii^\vee_{\Zz_m} \otimes g_{13}^*\UUu_0) \otimes \Dd_{g_{23}}(g_{13}^*\UUu_0)^{-1} \otimes \Dd_{g_{23}}(g_{12}^* \Ii^\vee_{\Zz_m})^{-1}.
\]

Thanks to this description of $\overline{\PPp}_m|_{\W_m \times \Jjac^{\, \delta}(\overline{X})}$ and the description of $\PPp$ given at the beginning of the proof, the result follows from \cite[Claim after (4.18)]{melo2}.
\end{proof}

The following theorem was explained to us by T.Pantev, who proved it in collaboration with D. Arinkin. Since the proof is not published, we include one here.
\begin{theorem}[D. Arinkin and T. Pantev]
Let $\overline{X}$ be a connected reduced curve with at most nodal singularities. For $r$ large enough, the $\{ \overline{\PPp}_m \to W_m \times \overline{\Jjac}^{\, \delta}(\overline{X}) \}_{m = r}^\infty$ descend to a Cohen--Macaulay sheaf $\overline{\PPp}$ over $\overline{\Jjac}^{\, \delta}(\overline{X}) \times \overline{\Jjac}^{\, \delta}(\overline{X})$, that extends $\PPp$ up to a twist. 
\end{theorem}

\begin{proof}
Thanks to Proposition \ref{pr restriction of Poincare is Poincare} one has that the set of restrictions $\{ \overline{\PPp}_m|_{\W_m \times \Jjac^{\, \delta}(\overline{X})} \}_{m = r}^\infty$ descend to a bundle over the product of stacks $\overline{\Jjac}^{\, \delta}(\overline{X}) \times \Jjac^{\, \delta}(\overline{X})$. Let $W_m^\ell$ denote that subset of $W_m \subset \Hilb^{N_m}(\overline{X})$ given by those subschemes whose ideal sheaf is invertible. One can proceed analogously as we did in the proof of Proposition \ref{pr restriction of Poincare is Poincare} and show that the restriction $\{ \overline{\PPp}_m|_{W_m^\ell \times \Jjac^{\, \delta}(\overline{X})} \}_{m = r}^\infty$ descend to a bundle over the product of stacks $\Jjac^{\, \delta}(\overline{X}) \times \overline{\Jjac}^{\, \delta}(\overline{X})$. Therefore, the restriction of the $\overline{\PPp}_m$ to $\left ( W_m \times \overline{\Jjac}^{\, \delta} (\overline{X}) \right)^\sharp :=  \left ( W_m \times \Jjac^{\, \delta}(\overline{X})  \right ) \cup \left ( W_m^\ell \times \overline{\Jjac}^{\, \delta}(\overline{X}) \right )$ descend to a bundle over $\left ( \overline{\Jjac}^{\, \delta}(\overline{X}) \times \overline{\Jjac}^{\, \delta}(\overline{X}) \right )^\sharp$ that we denote $\overline{\PPp}^\sharp_m$.

We now recall that $i: \left ( \overline{\Jjac}^{\, \delta}(\overline{X}) \times \overline{\Jjac}^{\, \delta}(\overline{X}) \right )^\sharp \hookrightarrow \overline{\Jjac}^{\, \delta}(\overline{X}) \times \overline{\Jjac}^{\, \delta}(\overline{X})$ has codimension at least $2$. Thanks to Proposition \ref{pr Pp is CM}, we have that $\overline{\PPp}$ is Cohen --Macaulay. Then, it follows that 
\begin{equation} \label{eq olPp as the pf of olPp}
\overline{\PPp}_m \cong i^*\overline{\PPp}^\sharp_m                                                   \end{equation}
so the collection $\{ \overline{\PPp}_m^\sharp \}_{m = r}^\infty$ descend to a bundle on $\left ( \overline{\Jjac}^{\, \delta}(\overline{X}) \times \overline{\Jjac}^{\, \delta}(\overline{X}) \right )^\sharp$. Thanks to \eqref{eq olPp as the pf of olPp}, one has that
\[
\overline{\PPp} \cong i^*\overline{\PPp}^\sharp.
\]
Therefore, $\{\overline{\PPp}_m \}_{m = \ell}^\infty$ descend to a sheaf over $\overline{\Jjac}^{\, \delta}(\overline{X}) \times \overline{\Jjac}^{\, \delta}(\overline{X})$. The rest of the proof is straigth-forward.
\end{proof}

When our curve $\overline{X}$ is irreducible any rank $1$ torsion free sheaf is stable and simple. Therefore, the moduli stack of torsion free sheaves on a curve is the quotient stack associated to the fine compactified Jacobian $\overline{\Jac}^{\, \delta} (\overline{X})$ quotiented by the trivial action of $\CC^*$,
\[
\overline{\Jjac}^{\, \delta} (\overline{X}) \cong \left [ \quotient{\overline{\Jac}^{\, \delta}(\overline{X})}{\CC^*} \right ].
\]
Let us denote by $\overline{\Pp} \to \overline{\Jac}^{\, \delta}(\overline{X}) \times \overline{\Jac}^{\, \delta}(\overline{X})$ the pull-back of the Poincar\'e sheaf $\PPp$ under the obvious projection $\overline{\Jac}^{\, \delta}(\overline{X}) \to [\overline{\Jac}^{\, \delta}(\overline{X})/\CC^*]$, and one can consider the integral functor given by it, 
\begin{equation} \label{eq FM with olPp}
\morph{D^b \left ( \overline{\Jac}^{\, \delta}(\ol{X}) \right )}{D^b \left ( \overline{\Jac}^\delta(\ol{X}) \right )}{\Ee^\bullet}{R \pi_{2,*}(\pi_1^*\Ee^\bullet \otimes \overline{\Pp}).}{}{\overline{\Phi}}
\end{equation}
The Poincar\'e sheaf $\overline{\Pp}$ was first obtained by \cite{EGK} for compactified Jacobians of irreducible nodal curves. Arinkin \cite{arinkin2} extended this construction to any irreducible reduced planar curve, showing also that \eqref{eq FM with olPp} is a derived equivalence. Although his result does not extend to the context under consideration, we include it for the sake of completeness:

\begin{theorem}[\cite{arinkin2}] \label{tm arinkin}
Let $\overline{X}$ be an irreducible reduced planar curve, the Fourier--Mukai integrable functor $\overline{\Phi}$ provides an equivalence of categories. 
\end{theorem}

The integral functor associated to $\overline{\PPp}$ is an eigenfunctor of the derived category of sheaves over the moduli stack of torsion free rank $1$ sheaves over a reducible planar curve. It is being studied by Arinkin and Pantev \cite{pantev} whether this provides an equivalence or not.

\section{Totally reducible spectral curves}

\label{sc totally reducible}

\subsection{The locus of totally reducible spectral curves and the Borel subgroup}

We start by studying the Hitchin fibres associated to spectral curves that are totally reducible. 

Recall from Section \ref{sc Higgs moduli spaces} that, for any $b \in \H$, the associated spectral curve $\ol{X}_b$ is the $n:1$ cover of the base curve $X$ given by the vanishing of the section \eqref{eq equation spectral curve}. If $\ol{X}_b$ is totally reducible, then one can rewrite the section \eqref{eq equation spectral curve} as
\begin{equation} \label{eq equation totally reducible spectral curve}
\prod_{i = 1}^n (\lambda - \pi^*\alpha_i),
\end{equation}
where $\alpha_i \in H^0(X,K)$. In view of this, consider the symmetric product
\begin{equation} \label{eq definition of V_n}
\V := \Sym^n \left ( H^0(X,K) \right ).
\end{equation}
Hence
\begin{equation}\label{eq dim Vn}
\dim \V = n g.
\end{equation}
There is an injection into the Hitchin base
\begin{equation} \label{eq embedding of V into H}
\begin{array}{ccc} \V  & \hookrightarrow &  \H  
\\  
(\alpha_1, \dots, \alpha_n)_{\sym} & \longmapsto & ( q_1(\alpha_1, \dots, \alpha_n), \dots , q_n(\alpha_1, \dots, \alpha_n) ). \end{array}
\end{equation}
In the above: $(\alpha_1, \dots, \alpha_n)_{\sym}$ denotes the orbit of $(\alpha_1, \dots, \alpha_n)$ under the $n$-th symmetric group ${\sym}$, and $q_i(\alpha_1, \dots, \alpha_n)$ is the evaluation of $q_i$ on the diagonal Higgs field with entries $\alpha_i$. Note that the $q_i$ being invariant under the adjoint action, this depends only on the orbit $(\alpha_1, \dots, \alpha_n)_{\sym}$.

Seen inside the Hitchin base, $\V$ describes the locus of totally reducible spectral curves.

\begin{lemma}\label{lemma spectral curve}
$\V$ parametrizes all spectral curves that are totally reducible. Let $v \in \V$ be given by $v = (\alpha_1, \stackrel{m_1}{\dots}, \alpha_1, \dots, \alpha_{\ell},\stackrel{m_{\ell}}{\dots}, \alpha_{\ell})_{\sym}$, where $\sum_{i=1}^{\ell} m_i = n$ and $\alpha_i \neq \alpha_j$ if $i \neq j$. Then, its corresponding spectral curve is
\begin{equation} \label{eq description of X_v}
\ol{X}_v = \bigcup_{i = 1}^{\ell} X_i^{m_i},
\end{equation}
where each $X_i^{m_i}$ is a curve of multiplicity $m_i$ whose reduced subscheme is $X_i :=\alpha_i(X)$, isomorphic to $X$. 
\end{lemma}

\begin{proof}
This follows easily from \eqref{eq equation totally reducible spectral curve}.
\end{proof}

Fix a Borel subgroup $\B < \GL_n(\CC)$ containing $\C$, so that $\B=\C\ltimes \U$ where $\U=[\B,\B]$ is the unipotent radical of $\B$. Let us consider the subvariety given by those Higgs bundles whose structure group reduces to $\B$,
\[
\Bor : = \left\{(E,\varphi)\in\M_n\ \left| \begin{array}{l}
\exists \, \sigma \in H^0(X,E/\B),\\
\varphi\in H^0(X,E_\sigma(\mathfrak{b})\otimes K).
\end{array}\right. \right\},
\]
where $E_\sigma :=\sigma^*E$ is the principal $\B$-bundle on $X$ associated to the section $\sigma \in H^0(X,E/\B)$.

We can see that $\Bor$ coincides with the preimage under the Hitchin map of the locus of totally reduced spectral curves.

\begin{proposition}\label{pr Bor 1}
One has the following,
\begin{equation}\label{eq Bor}
\M_n\times_{\H} \V = \Bor.
\end{equation}
\end{proposition}

\begin{proof}
We first see that $\Bor \subset \M_n \times_{\H} \V$. This is a consequence of the following fact: given the Jordan--Chevalley decomposition of $x=x_s+x_n\in\mathfrak{gl}_n(\CC)$ into a semisimple $x_s$ and a nilpotent piece $x_n$, the invariant polynomials $q_i$ defining the Hitchin fibration evaluate  independently of the nilpotent part, namely $q_i(x)=q_i(x_s)$.

For the other inclusion one has to prove that any Higgs bundle $(E, \varphi) \in \M_n\times_{\H}\V$ admits a full flag decomposition.

Denote by $\Ff$ the torsion-free sheaf over the spectral curve $\ol{X}_v$ associated to $(E,\varphi)$ under the spectral correspondence. Recall that $\ol{X}_v$ is described in \eqref{eq description of X_v} and, using this notation, define
 \begin{equation}\label{eq Yi Zi}
Y_i := \bigcup_{j = 1}^i X_j^{m_j},\qquad
Z_i := \bigcup_{k = i+1}^\ell X_k^{m_k}.
\end{equation}
We consider the restriction of $\Ff$ to $\Ff|_{Z_i}$ and denote its kernel by $\Ff_i$, 
\begin{equation}\label{eq Li}
0 \longrightarrow \Ff_i \longrightarrow \Ff \longrightarrow \Ff|_{Z_i} \longrightarrow 0.
\end{equation}
Since $\Ff_i$ is a subsheaf of $\Ff$, it gives the Higgs subbundle $(E_i, \varphi_i) \subset (E, \varphi)$ under the spectral correspondence. Since $\Ff_{i-1}$ is a subsheaf of $\Ff_i$ we have that $(E_{i-1}, \varphi_{i-1}) \subset (E_i, \varphi_i)$ so we obtain a filtration
\begin{equation} \label{eq filtration from ordering}
0 \subset (E_1, \varphi_1) \subset \dots \subset (E_\ell, \varphi_\ell) = (E,\varphi).
\end{equation}

Note that a full flag filtration for each of the $(F_i, \phi_i):=(E_i, \varphi_i)/(E_{i-1}, \varphi_{i-1})$ will induce a full flag filtration of $(E,\varphi)$.

Note that the eigenvalues of $\phi_i$ are all equal to $\alpha_i$. Set $F_{i,1} = \ker(\phi_i - \alpha_i \otimes \id_{F_i})$ and let $\phi_{i,1}$ be the restriction to $F_{i,1}$. Set $(F'_i, \phi'_i) = (F_i,\phi_i)/(F_{i,1}, \phi_{i,1})$ and take $F'_{i,2} = \ker(\phi'_i - \alpha_i \otimes \id_{F'_i})$ and $\phi'_{i,2} = \phi'_{i}|_{F'_{i,2}}$. Note that $(F'_{i,2},\phi'_{i,2}) \subset (F'_i, \phi'_i)$ lifts to a subbundle $(F_{i,2}, \phi_{i,2})$ of $(F_i, \phi_i)$ which contains $(F_{i,1}, \phi_{i,1})$. Repeating this procedure one gets a filtration
\[
0 \subset (F_{i,1}, \phi_{i,1}) \subset \dots \subset (F_{i,s}, \phi_{i,s}) = (F_{i}, \phi_{i}),
\]
where each quotient $(F_{i,j}, \phi_{i,j})/(F_{i,j-1}, \phi_{i,j-1})$ is isomorphic to a Higgs bundle of the form $(G_{i,j}, \alpha \otimes \id_{G_{i,j}})$.

Given an ample line bundle $\Oo_X(1)$, one has that, for sufficiently high $N>0$, that $\Oo_X(-N)$ is a subbundle of $G_{i,j}$, and the same is valid for the quotient $G_{i,j}/\Oo_X(-N)$. Hence, one can always construct a full flag filtration for each of the $G_{i,j}$. This provides a full flag filtration for all the $(F_i, \phi_i)$, hence a full flag filtration for $(E,\varphi)$. 
\end{proof}

\begin{remark} \label{rm Bor stack}
Note that Proposition \ref{pr Bor 1} generalizes to the corresponding moduli stacks as stability plays no role on its proof. 
\end{remark}

\begin{remark} \label{rm reduction = full flag filtration not canonical}
The full flag filtration of the Higgs bundle $(E,\varphi)$ determines the reduction to the Borel subgroup $\sigma \in H^0(X,E/\B$ with $\varphi \in H^0(X,E_\sigma(\lie{b}\otimes K))$. Note that, in general, one can not give a canonical such a full-flag filtration. 
\end{remark}

In the remaining of the section we will focus on an open subset of $\V$. Denote the big diagonal of $\V$ by
$$
\Delta := \{ (\alpha_1, \dots, \alpha_n)_{\sym} \in \V \textnormal{ such that } \alpha_i = \alpha_j \textnormal{ for some } i,j  \}
$$
and its complement in $\V$ by
$$
\V^\red := \V \setminus \Delta.
$$
Let us provide a description of the spectral curves parametrized by $\V^\red$.

\begin{lemma}\label{lemma spectral curve red}
$\V^\red$ is a dense open subset of $\V$ parametrizing reduced, totally reducible, and nodal spectral curves. Furthermore, for any $v \in \V^\red$ given by $(\alpha_1, \dots, \alpha_n)_{\sym_n}$, the spectral curve $\ol{X}_v$ is reduced and has the following decompositon into irreducible components,
\begin{equation} \label{eq nodal spectral curve}
\ol{X}_v = \bigcup_{i = 1}^{n} X_i,
\end{equation}
with $X_i = \alpha_i(X) \cong X$. It is a singular curve with singularity divisor of length $|D| = (n^2 - n)(g-1) = \delta$. Its normalization, $\ol{X}_v$, is isomorphic to
\begin{equation} \label{eq description of normalization}
\wt{X}_v \cong \bigsqcup_{i=1}^n X_i \cong \bigsqcup_{i=1}^n X,
\end{equation}
and the normalization morphism,
\begin{equation}\label{eq normalization}
\nu: \wt{X}_v \to\ol{X}_v,
\end{equation}
is the identity restricted to each of the $X_i$.
\end{lemma}

\begin{proof}
$\Delta$ is a closed subset of $\V$ of codimension $1$, hence $\V^\red$ is open and dense. When $v \in \V \setminus \Delta$, \eqref{eq description of X_v} implies that $\ol{X}_v$ is the union of $n$ different reduced and irreducible curves $X_i$ all isomorphic to $X$. It then follows that $\ol{X}_v$ is reduced and its normalization is as described in \eqref{eq description of normalization}. The description of the normalization mmorphism follows form the description of the spectral curve given in \eqref{eq nodal spectral curve}. The length of $D$ can be obtained after an easy computation using Riemann-Roch.
\end{proof}

For any two $\alpha_i$ and $\alpha_j$ with $i \neq j$, denote the divisor $D_{ij} = \alpha_i(X) \cap \alpha_j(X)$. Consider also the following subset of $\V^\red$,
$$
\V^\nod := \left\{ 
	   \begin{array}{l}
		 \textnormal{$(\alpha_1, \dots, \alpha_n)_{\sym} \in \V^\red$ such that for every $i<j<k$} 
		 \\
		 \hspace{1cm} \textnormal{(a) there is no multiple point on $D_{ij}$, and}
		 \\
		 \hspace{1cm} \textnormal{(b) $D_{ij} \cap D_{ik}$ is empty.}
	   \end{array}
	     \right\}.
$$

\begin{lemma} \label{lm spectral curve nod}
$\V^\nod$ is a dense open subset of $\V$ parametrizing reduced, totally reducible, and nodal spectral curves. For any $v \in \V^\nod$ given by $(\alpha_1, \dots, \alpha_n)_{\sym_n}$, the singularity divisor $D$ of the spectral curve $\ol{X}_v$ is
$$
D := \bigcup_{i,j} D_{ij}
$$
and consists only of simple points.
\end{lemma}

\begin{proof}
Since conditions (a) and (b) are open and generic, $\V^\nod$ is a dense open subset of $\V^\red$. It then follows from Lemma \ref{lemma spectral curve red} that $\V^\nod$ is dense within $\V$ too and the first statement follows.

Recall the description of $\ol{X}_v$ given in Lemma \ref{lemma spectral curve red}. Take two irreducible components of $\ol{X}_v$, $X_i$ and $X_j$, intersecting each other at $D_{ij}$. Note that $D$ coincides with the set of intersection points and recall that we have imposed the condition $D_{ij} \cap D_{ik} = \emptyset$ if $j\neq k$ in the definition of $\V^\nod$, so $D$ is the union of the $D_{ij}$.
\end{proof}

Using the notation of Lemma \ref{lemma spectral curve red}, consider the following morphisms,
\begin{equation}\label{eq not curves}
\xymatrix{
\wt{X}_v\ar[dddd]_p\ar[ddr]_{\nu} & & &
\\
 & & &
\\
& \spec_v\ar[ddl]_\pi & & X_j  \ar@{_(->}[ll]_{\iota_j}  \ar@{_(->}[uulll]_{\delta_j}
\\
 & & &
\\
X.\ar[urrur]_{\alpha_j}^{\cong} & & & 
}
\end{equation}

We have seen in Remark \ref{rm reduction = full flag filtration not canonical} that the reduction to the Borel subgroup can not be defined canonically for an arbitrary Higgs bundle in $\Bor$. However, for those Higgs bundles lying over $v \in \V^\nod$, one can fix such a reduction after choosing an ordering for the components of $v$.

\begin{proposition} \label{pr description line bundles on V^nod}
Let $v=(\alpha_1,\dots,\alpha_n)_{\sym_n}\in \V^{\nod}$ and let $(E,\varphi)\in h^{-1}(v)$. For any ordering $J=(\alpha_{j_1},\dots, \alpha_{j_n})$ of the set $\{\alpha_1,\dots,\alpha_n\}$, one can chose canonically a filtration
$$
(E_J)_\bullet \, : \, 0 \subsetneq (E_1,\varphi_1) \subsetneq \dots \subsetneq (E_n,\varphi_n) = (E,\varphi),
$$
such that the Higgs field induced by $\varphi$ on $E_i/E_{i-1}$ is $\alpha_{j_i}$. Furthermore, if the associated spectral datum associated to $(E,\varphi)$ is a line bundle over the spectral curve, $L \in \Jac^{\delta}(\ol{X}_v)$, then
$$
E_i/E_{i-1} \cong (\alpha_{j_i}^*\iota_{j_i}^*L)\otimes K^{i-n}
$$
\end{proposition}

\begin{proof}
Using the ordering $J$ set $Y_{i}=\bigcup_{k=1}^iX_{j_k}$, $Z_{i}=\bigcup_{k=i+1}^nX_{j_k}$ as in \eqref{eq Yi Zi}. After the choice of $J$, the filtration for the spectral data given in \eqref{eq Li} is canonical and so is the filtration \eqref{eq filtration from ordering} of $(E,\varphi)$. Since $v \in \V^\nod$, \eqref{eq filtration from ordering} is a full flag filtration what proves the first statement.

For the second statement recall that the filtration of $L$ is defined by the subsheaves $L_{i}=L\otimes\mc{I}_{\spec,Z_i}$ where $\mc{I}_{\spec,Z_i}$ denotes the ideal defining the subscheme $Z_i\subset \spec$. Now, $\mc{I}_{\spec,Z_i}\cong \mc{O}_{Y_i}\otimes \mc{I}_{Y_i,Z_i\cap Y_i}$, thus
$$
L_i\cong L|_{Y_i}\otimes \mc{I}_{Y_i,Z_i\cap Y_i}.
$$
Note that
$$
0\longrightarrow \quotient{L_i}{L_{i-1}}\longrightarrow L|_{Z_{i-1}}\longrightarrow L|_{Z_i}\longrightarrow 0
$$
is exact, so that 
\begin{align*}
L_i/L_{i-1} & \cong L|_{Z_i}\otimes\mc{I}_{Z_{i-1},Z_i}
\\
& \cong L|_{Z_i}\otimes \mc{O}_{X_i}\otimes\mc{I}_{X_i,Z_i\cap X_i}
\\
& \cong L|_{X_i}(-\sum_{k=i+1}^n D_{ik}).    
\end{align*}
Now, the pushforward 
of 
$$
0\longrightarrow {L_{i-1}}\longrightarrow{L_i}\longrightarrow \quotient{L_i}{L_{i-1}}\longrightarrow 0
$$
gives under the spectral correspondence
$$
\quotient{(E_i, \varphi_i)}{(E_{i-1}, \varphi_{i-1})} \cong \left ( \alpha_{j_i}^*\iota_{j_i}^*L(-\sum_{k=i+1}^nD_{ik}), \alpha_{j_i} \right ),
$$
where we abuse notation by identifying the divisor $D_{jk}$ and its image under $\pi$. Naturally, $K \cong \Oo_X(D_{jk})$, which yields the result.
\end{proof}

\subsection{Totally reducible nodal spectral curves and their desingularization}

We study in this section the relation between the Hitchin fibres associated to totally reduced spectral curves with only nodal singularities and their partial and complete desingularizations.

We first recall some well known facts about rank one torsion free sheaves on a reduced connected nodal curve $\ol{X}$ with  divisor of singularities $D$. We start by studying the particular case of line bundles which admit a simple description in terms of their pullback to partial (and complete) desingularization. Consider $R\subset D$ a subdivisor of the singular divisor of the reduced curve $\ol{X}$, and let 
\begin{equation} \label{eq definition of nu_R}
\nu_R:\wt{X}_R\to \overline{X} 
\end{equation}
be the partial desingularization at $R$. Note that $\nu_D:\wt{X}^D\to \spec$ is just the normalization map $\nu$ that appeared in \eqref{eq normalization}. Denote by 
$$
\morph{\Jac(\ol{X})}{\Jac(\wt{X}_R)}{L}{\nu_R^*L}{}{\hat{\nu}_R}
$$
the pullback map. The fibres of this map are described in the following lemma due to Grothendieck \cite[Proposition 21.8.5]{EGA}, that we reproduce adapted to our notation.

\begin{lemma}[\cite{EGA}]\label{lm fibration picards}
For any subdivisor $R\subset D$ of the singular divisor of the reduced nodal curve $\ol{X}$, the pullback map $\hat{\nu}_R$ is a smooth fibration with fiber $(\CC^\times)^{|R|-n_R+1}$ where $n_R$ is the number of connected components of $\wt{X}_R$. 
\end{lemma}

One can give the following geometrical interpretation of Lemma \ref{lm fibration picards}: line bundles on reduced nodal curves can be described in terms of line bundles on each of the $n_R$ irreducible components of the (partial) desingularization, together with $|R|$ gluing data ({\it i.e.} an element of $\CC^\times$ identifying the two local components of the nodal point) for each of the intersection points, taking into account the identification given by scalar automorphisms on each of the components.

In the case of $R = D$, we have that $\wt{X}_R = \wt{X}_v$ is the normalization of the spectral curve and $\nu_R$ coincides with the normalization map $\nu$. One has the following description adapted to that case.

\begin{corollary} \label{co hat nu}
The pullback map 
\begin{equation} \label{eq hatnu}
\morph{\Jac(\ol{X}_v)}{\Jac(\wt{X}_v)}{L}{\nu^*L}{}{\hat{\nu}}
\end{equation}
is a smooth fibration with fiber $(\CC^\times)^{n(n-1)(g-1)-(n-1)}$. 
\end{corollary}

It can be checked that that the degree of a line bundle $L$ on a connected nodal curve $\ol{X}$ with irreducible components $X_i$ is given by the sum of the degrees of the line bundles obtained by restricting to each of the components, $\deg L=\sum_i\deg L|_{X_i}$. In view of this we refer to the {\it multidegree} of a line bundle $L$ on $\ol{X}$ as the the degree on each of the connected components of $\wt{X}$. In other words, the multidegree of $\hat{\nu}(L) =\nu^*L$ over the disconnected curve $\wt{X}$.

A rank one torsion free sheaf on $\spec$ is either a line bundle or a pushforward of a line bundle on a partial desingularization $\nu_R$ of $\ol{X}$ (see \cite{Seshadri} for instance). Consider $L \in \Jac(\wt{X}_R)$ be given by the line bundles $L_i$ on each connected component $\wt{X}_{R,i}$ of $\wt{X}_R$. Geometrically, the (rank one torsion free coherent) sheaf $\nu_{R,*}L$ on $\ol{X}$ is obtained by considering $n_R$-tuples of $L_i \to \wt{X}_{R,i}$, together with identifications at all points $x\in D\setminus R$. One can also check that 
\begin{equation} \label{eq deg under push-forward}
\deg (\nu_{R,*}L) = \deg(L)+|R|.
\end{equation}


We now study in more detail the spectral curves parametrized by $\V^\nod$ and their corresponding Hitchin fibres. Let us first fix some notation. Recall that, for $v\in\V^{\nod}$ given by $(\alpha_1, \dots, \alpha_n)_{\sym_n}$ we denote the associated spectral curve by $\ol{X}_v$. After Lemmas \ref{lemma spectral curve red} and \ref{lm spectral curve nod}, $\ol{X}_v = \bigcup_{i = 1}^n X_i$, where $X_i = \alpha_i(X) \cong X$ and be the divisor of singularities $D$ has length $\delta$ and it is given by the union of the two-by-two intersection of the smooth irreducible components. For any subdivisor $R\subset D$ consider the partial desingularization along $R$,
\begin{equation}\label{eq big eq partial normaliz}
\xymatrix{\wt{X}_R \ar[r]^{\nu_R}\ar[dr]_{p_R}&\spec_v\ar[d]^\pi\\
&X.}
\end{equation}

Consider the decomposition $\wt{X}_R=\bigsqcup_{i=1}^{n_R}\wt{X}_{R,i}$ into connected components and denote $\ol{X}_{R,i} = \nu_R(\wt{X}_{R,i})$. Therefore, one has the decomposition $R=R_1\sqcup\dots\sqcup R_{n_R}\sqcup R_s$ such that
$$
\nu_{R,i}:\wt{X}_{R,i}\longrightarrow\spec_{R,i}
$$
is a partial desingularization of
$\spec_{R,i}$ along a non-separating divisor $R_i$, and $R_s$ is the separating divisor in $R$ ({\it i.e.} the divisor along which connected components are to appear after desingularization). Denote by $p_{R,i}$ the restriction of $p_R$ to the corresponding connected component. For each irreducible component $X_j = \alpha_j(X) \cong X$ of $\ol{X}_v$, and its corresponding connected component $\wt{X}_j \cong X_j \cong X$ of the normalization $\wt{X}_v$, consider the commuting diagram 
\begin{equation}\label{eq not curves R}
\xymatrix{
\wt{X}_{R,i}\ar[dddd]_{p_{R,i}}\ar[ddr]^{\nu_{R,i}} & & & \wt{X}_j\ar@{_(->}[lll]_{\widetilde{\iota}_j}\ar[dd]^{\nu_{R,i}^j}_{\cong}
\\
 & & &
\\
& \spec_{R,i}\ar[ddl]_\pi & & X_j \ar@{_(->}[ll]_{\iota_j}
\\
 & & &
\\
X.\ar[uurrr]_{\alpha_j}^{\cong} & & &
}
\end{equation}
We then see that $\wt{X}_j \cong X$ are the irreducible components of $\wt{X}_{R,i}$ and denote by $C_i$ the index set of these components, hence
$\wt{X}_{R,i}=\bigcup_{j\in C_i}{\wt{X}}_j$ has $|C_i|$ irreducible components. Write $\tilde D_i\subset\wt{X}_{R,i}$ for the singular divisor of $\wt{X}_{R,i}$ and observe that it coincides with the ramification divisor of $p_{R,i}:\wt{X}_{R,i}\longrightarrow X$. Observe as well that \begin{equation}\label{eq Di}
D_i:=\nu_{R,i}(\tilde D_i)=\sum_{j,k\in C_i}D_{jk}-R_i\subset D
\end{equation}
and
\begin{equation}\label{eq D from Di and R}
D=\sum_i(D_i+R_i)+R_s.
\end{equation}
\begin{figure}[H]\label{dibujo}
\includegraphics[scale=0.2]{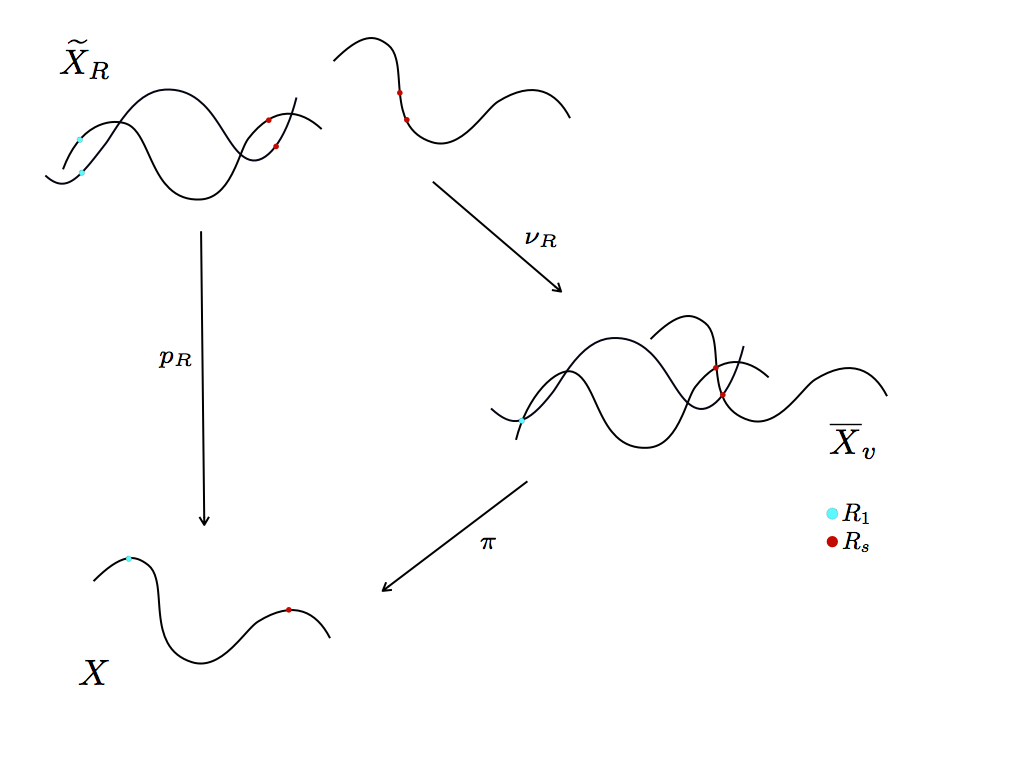}
\caption{Partial desingularization along $R$}
\end{figure}

We provide in the following lines a description of the Jacobians over $\wt{X}_R$. Choose an ordering $(\wt{X}_{R,1}, \dots, \wt{X}_{R,{n_R}})$ of the connected components of $\wt{X}_R$ and, with respect to it, denote
$$
\Jac^{\, \overline{\eta}}(\wt{X}_R) \cong \Jac^{\eta_1}(\wt{X}_{R,{1}})\times\dots\times\Jac^{\eta_{n_R}}(\wt{X}_{R,{n_R}})
$$
for each multidegree $\overline{\eta}$, and set $|\overline{\eta}| = \sum_{i=1}^{n_R}\eta_i$. Consider the decomposition  

\begin{equation}\label{eq picspecnR}
\Jac^{\, \eta}(\wt{X}_R) \cong\bigcup_{|\overline{\eta}|=\eta}  \Jac^{\, \overline{\eta}}(\wt{X}_R).
\end{equation}
Let also 
$$
 \Jac^{\eta_i}(\wt{X}_{R,{i}})=\bigcup_{\sum d_i^j=\eta_i}  \Jac^{(d_i^1,\dots, d_i^{|C_i|})}(\wt{X}_{R,{i}}),
$$
be the decomposition in terms of the multidegree associated to the irreducible components.


With the notation being settled, we now study push-forwards of line bundles under $\nu_R$. Recall that every rank one torsion-free sheaf on $\ol{X}_v$ is either of this form or a line bundle.

\begin{lemma}\label{lm spec Bor}
Let $v \in \V^\nod$. Only if
\begin{equation} \label{eq condition on eta}
\eta_i=\sum_{k=1}^{|C_i|} d_i^k=|D_i|,
\end{equation}
one has that the push-forward map
\begin{equation} \label{eq def check nu_R}
\morph{\Jac^{(d_1^1,\dots, d_1^{|C_1|})}(\wt{X}_{R,{1}})\times\dots\times \Jac^{(d_{n_R}^1,\dots, d_{n_R}^{|C_{n_R}|})}(\wt{X}_{R,n_R})}{\ol{\Jac}^{\, \delta}(\ol{X}_v)}{L}{\nu_{R,*}L,}{}{\check{\nu}_{R}}
\end{equation}
is well defined and an injection. Furthermore, when $R_s\neq\emptyset$, the Higgs bundles whose corresponding spectral data is in the image of $\check{\nu}_{R}$ are strictly polystable.
\end{lemma}

\begin{proof}
Assume first that $R_s=\emptyset$ hence $n_R = 1$ so $\wt{X}_R$ is connected. In that case $\nu_{R,*}L$ is stable. Otherwise, as any destabilizing subsheaf of $\nu_{R,*}L$ will come from a destabilizing subsheaf of $L$ and this would imply that $L$ is unstable. But $L$ is a line bundle so it is forcely stable. One also has that $\nu_{R,*}L \ncong \nu_{R,*}L'$ if $L \ncong L$ so it only remains to prove that the degree $\nu_{R,*}(L)$ is $\delta=|D|$. Note that this follows from \eqref{eq deg under push-forward} and \eqref{eq D from Di and R}, since \eqref{eq condition on eta} is equivalent to $\eta = |D_1|$ as $\wt{X}_R$ is connected.

Now, we study the case where $R_s\neq\emptyset$, so $\wt{X}_R$ has $n_R > 1$ connected components. Denote $\wt{\iota}_k^*L=L_k$, where the notation is as in \eqref{eq not curves R}. Note that 
\[
\pi_* \nu_{R,*}L=p_{R,*}L=\bigoplus_{i=1}^{n_R} p_{R,i,*} L_i,
\]
where the notation is as in \eqref{eq not curves R}. Note that the direct sum is invariant by the Higgs field, since the Higgs field is equivalent to a $\pi_*\Oo_{\spec_v}$ module structure on $\pi_* \nu_{R,*}L$, and the latter factors through a $\pi_* \nu_{R,*}\Oo_{\wt{X}_{R}}$-module structure. This proves that the Higgs bundle associated to $L$ is decomposable. Note that, as before, $\nu_{R, i,*}L_i$ is stable as $L_i$ is a line bundle, hence stable. Therefore, it must happen that 
\begin{equation} \label{eq condition for polystability}
\deg p_{R,i,*}L_i= \deg \pi_{i,*} \nu_{R,i,*}L_i =0
\end{equation}
for the Higgs bundle to be polystable. Note that we have used $p_{R,i}=\pi_i\circ\nu_{R,i}$. 

Given that $\spec_{R,i}$ is a totally reducible nodal spectral curve with $|C_i|$ irreducible components, arguing as in Lemma \ref{lemma spectral curve} (compare with \eqref{eq delta}) we find that \eqref{eq condition for polystability} is equivalent to
\[
\deg \nu_{R,i,*}L_i=(|C_i|^2-|C_i|)(g-1)=\left | \sum_{j,k\in C_i}D_{jk} \right |.
\]
Now, considering
$$
0\longrightarrow \nu_{R,i}^*\Oo_{\spec_{R,i}}\longrightarrow\Oo_{\wt{X}_{R,i}}\longrightarrow \Oo_{R_i}\longrightarrow 0,
$$
we have that
$$
\left |\sum_{j,k\in C_i}D_{jk} \right |=\deg \nu_{R,i,*}L_i=\deg L_i+|R_i|,
$$
which together with \eqref{eq Di} implies that \eqref{eq condition for polystability} is equivalent to \eqref{eq condition on eta}. In that case \eqref{eq def check nu_R} is well defined and it is injective since, as before, we have that $\nu_{R,i,*}L_i \ncong \nu_{R,i,*}L'_i$ whenever $L_i$ and $L'_i$ are not isomorphic.
\end{proof}

As a corollary of Lemma \ref{lm spec Bor}, one can derive the following well known fact when $R = D$. Hence, after Lemma \ref{lemma spectral curve red} the normalization $\wt{X}_v = \wt{X}_R$ of $\ol{X}$ decomposes into $n$ connected componets, each of them isomorphic to the base curve $X$. 

\begin{corollary} \label{co description check nu}
The push-forward map 
\begin{equation}\label{eq check nu}
\morph{\Jac^{(0,\dots,0)}(\wt{X}_v)}{\overline{\Jac}^\delta(\ol{X}_v)}{L}{\nu_*L,}{}{\check{\nu}}
\end{equation}
is well defined and an injection. Furthermore $\check{\nu} \left ( \Jac^{(0,\dots,0)}(\wt{X}_v) \right )$ classifies those strictly polystable Higgs bundles that decompose into direct sum of line Higgs bundles.
\end{corollary}

In Proposition \ref{pr description line bundles on V^nod} we provided a description of the dense open subset of the Hitchin fibre over $v \in \V^\nod$ corresponding to line bundles. Recalling that every torsion-free sheaf is given by the push-forward of a line bundle under a partial normalization $\nu_R$, we complete in the following lines the description initiated in Proposition \ref{pr description line bundles on V^nod} of Higgs bundles lying over $\V^\nod$.

\begin{proposition}\label{pr Bor 2}
Take any $v \in \V^\nod$ given by $v=(\alpha_1,\dots,\alpha_n)_{\sym_n}$ and suppose that the multidegree $\overline{d}$ satisfies \eqref{eq condition on eta}. One has the following,
\begin{enumerate}
\item \label{it iii} Assume $R_s\neq\emptyset$. Then, the Higgs bundles corresponding to spectral data in $\check{\nu}_R \left ( \Jac^{\ol{d}}(\wt{X}_R) \right )$ admit a reduction of their structure group to $\B_1\times\dots\times \B_{n_R}\subset \B$ 
where $\B_i$ is the Borel subgroup of $\GL(|C_i|,\CC)$.

\item \label{it v} Consider the Higgs bundle $(E,\varphi)=\bigoplus_{k=1}^{n_R}(E_k,\varphi_k)$ in $h^{-1}(v)\cap \check{\nu}_R \left ( \Jac^{\ol{d}}(\wt{X}_R) \right )$. Suppose that the spectral data of $(E,\Phi)$ is $\nu_{R,*}L$ where $L$ is a line bundle over $\wt{X}_R$. Then, for any ordering $J_k=(\alpha_{j_1},\dots, \alpha_{j_{|C_k|}})$ of $C_k$, one can chose canonically a filtration for $(E_k,\varphi_k)$, for all $k \in \{ 1, \dots, n_R \}$,
$$
(E_{{J_k}})_\bullet \, : \, 0 \subsetneq (E_{k,1}, \varphi_{1}) \subsetneq \dots \subsetneq (E_{k,|C_k|}, \varphi_{k,|C_k|} ) =(E_k,\varphi_k)
$$
such that
$$
(E_{k,i},\varphi_{k,i})/(E_{k,i-1},\varphi_{k,i-1})=\left (L|_{\wt{X}_{j_i}}\otimes \Oo \left( -\sum_{i'\geq i+1}{\wt{X}_{j_i}}\cap {\wt{X}_{j_{i'}}} \right ),\alpha_{j_i}\right )
$$ 
where we abuse notation by identifying the subdivisors ${\wt{X}_{j_i}}\cap {\wt{X}_{j_{i'}}}\subset D_{i'}$  \eqref{eq Di} and their images under $p_{i'}$, and $L|_{\wt{X}_{j_i}}$ with its pullback under $\alpha_{j_i}\circ(\nu_{R,k}^{j_i})^{-1}$.
\end{enumerate}
\end{proposition}

\begin{proof}
\eqref{it iii} Follows from Proposition \ref{pr Bor 1} and Lemma \ref{lm spec Bor}.

\eqref{it v} To simplify notation, take the orderings $((\alpha_1,\dots,\alpha_{|C_1|}), \dots (\alpha_{|C_{n_R-1}|},\dots,\alpha_n))$. The reasoning that follows adapts just the same way to any other choice of orderings. The statement is proven as Proposition \ref{pr description line bundles on V^nod}, taking the following remarks into account:

First note that the subscheme $Z_i\subset \ol{X}_v$ appearing in the proof of Proposition \ref{pr description line bundles on V^nod} is the image of its partial desingularization $\wt{Z}_i\subset\wt{X}_R$, on which the filtration will be given on each of the connected components. This restricts the proof to line bundles over connected curves $\wt{X}_R$.

By the previous remark we may assume $\wt{X}_R$ is connected and $J$ is an ordering for $\{\alpha_1,\dots, \alpha_n\}$. We obtain a full flag in the same way as in the proof of Proposition \ref{pr description line bundles on V^nod}, the difference with this case being that the ideal
$$
\mc{I}_{\wt{Z}_{i-1},\wt{Z}_{{i}}}\cong \Oo_{\wt{X}_{i}}(-\wt{X}_{i}\cap \wt{Z}_{i})
$$
depends on the ordering (and $R$) and so does 
$$
\wt{X}_{i}\cap \wt{Z}_{{i}}=\sum_{i'\geq i+1}\wt{X}_{i}\cap \wt{X}_{i'}.
$$
\end{proof}

\section{A $\BBB$-brane from the Cartan subgroup}\label{sect Cartan}

In this section we construct a $\BBB$-brane of $\M_n$, which is, by definition (cf.~\cite{kapustin&witten}), a pair $(\mathrm{N},(\Fff, \nabla_\Fff))$ given by:
\begin{itemize}
    \item A hyperholomorphic subvariety $\mathrm{N}\subset\M_n$, {\it i.e.} a subvariety which is holomorphic with respect to the three complex structures $\Gamma_1$, $\Gamma_2$ and $\Gamma_3$.
    
    \item A hyperholomorphic sheaf $(\Fff, \nabla_\Fff)$ supported on $\N$, {\it i.e.} a sheaf $\Fff$ equipped with a connection whose curvature $\nabla_\Fff$ is of type $(1,1)$ in the complex structures $\Gamma_1$, $\Gamma_2$ and $\Gamma_3$.
\end{itemize}

\begin{remark}
A flat connection is trivially of type $(1,1)$ in any complex structure. 
\end{remark}


The embedding of the Cartan subgroup $\C\cong(\CC^\times)^n$ into $\GL(n,\CC)$ induces the {\it Cartan locus} of the moduli space of semistable Higgs bundles
$$
\Car = \left\{(E,\varphi)\in\M_n\ \left| \begin{array}{l}
\exists \, s \in H^0(X,E/\C),
\\
\varphi\in H^0(X,E_s(\mathfrak{c})\otimes K).
\end{array}\right. \right\},
$$
where $\mathbf{c}=\mathrm{Lie}(C)$ and $E_s$ is the principal $\C$-bundle on $X$ constructed from the section $s$. Observe that $\Car$ is the image of the injective morphism 
$$
c : \Sym^n \left (\M_1 \right ) \longrightarrow \M_n,
$$
which is hyperholomorphic, so $\Car$ is a hyperholomorphic subvariety.

%
%
%
%
%
%

Now we address the construction of the hyperholomorphic sheaf on $\Car$ for any topologically trivial line bundle $\Ll \to X$. Since a flat bundle is hyperholomorphic and the morphism $c$ is a hyperholomorphic morphism, it will suffice to construct a flat bundle on $\Sym^n(\M_1)$ and take its direct image under $c$.

After fixing a point $x_0 \in X$ we get an embedding $X \hookrightarrow \Jac^0(X)$. Consider our initial line bundle $\Ll \to X$, and let $\nabla_\Ll$ be a flat connection on it. Denote by $(\check{\Ll}, \check{\nabla}_\Ll)$ the unique flat line bundle in $\Jac^0(X)$ that restricts to $(\Ll, \nabla_\Ll)$. From a flat line bundle on $\Jac^0(X)$ one can define a flat line bundle on $\Sym^n(\Jac^0(X))$ as we explain in the following lemma.

\begin{lemma}\label{lm descent sym}
Let $(\check{\Ll}, \check{\nabla}_\Ll)$ be a flat line bundle on $\Jac^0(X)$. Consider 
$$
\pi_i: (\Jac^0(X))^{\times n}\to \Jac^0(X) 
$$
the projection onto the $i$-th factor. Let 
$$
\check{\Ll}^{\boxtimes n} :=\bigotimes_{i=1}^n\pi^*_i \check{\Ll}
$$ 
and 
$$
\check{\nabla}_\Ll^{\boxtimes n} :=\sum_{i=1}^n \pi^*_i \check{\nabla}_{\check{\Ll}} \otimes \bigotimes_{j \neq i}\id_{\pi^*_j\check{\Ll}}.
$$ 
Then $\left ( \check{\Ll}^{\boxtimes n}, \check{\nabla}_\Ll^{\boxtimes n} \right )$ is a flat bundle that descends to a flat bundle $\left (\check{\Ll}^{(n)}, \check{\nabla}_\Ll^{(n)} \right )$ on $\Sym^n(\Jac^0(X))$. 
\end{lemma}

\begin{proof}
The bundle $\check{\Ll}^{\boxtimes n}$ is invariant by the action of $\sym_n$ and moreover the natural linearization action derived from the one on the bundle $\oplus_{i=1}^n \check{\Ll}$ satisfies that over point $p\in (\Jac^0(X))^{\times n}$ with non trivial centraliser $Z_p\subset \sym$, the centraliser $Z_p$ acts trivially on $\check{\Ll}^{\boxtimes n}_p$. It follows from Kempf's descent lemma that $\check{\Ll}^{\boxtimes n}$ descends to a line bundle  $\check{\Ll}^{(n)}$ on $\Sym^n(\Jac^0(X))$
$$
\check{\Ll}^{(n)} := \left (q_* \check{\Ll}^{\boxtimes n} \right )^{\sym_n},
$$
where $q$ denotes the projection $\Jac^0(X)^{\times n} \to \Sym^n(\Jac^0(X))$.

Note that $\check{\nabla}_\Ll^{\boxtimes n}$ is flat since the $\pi_i^*\check{\nabla}_\Ll$ are flat and for any two $i \neq j$, one has that $\pi_i^*\check{\nabla}_\Ll$ and $\pi_j^*\check{\nabla}_\Ll$ commute. By equivariance with respect to the action of the symmetric group $\sym_n$, it descends to a flat connection $\check{\nabla}_\Ll^{(n)}$ on $\check{\Ll}^{(n)}$.
\end{proof}

Recall that the moduli space of topologically trivial rank $1$ Higgs bundles fibres over the Jacobian, $\M_1 \longrightarrow \Jac^0(X)$. This fibration extends to the symmetric product 
$$
p: \Sym^n \left (\M_1 \right ) \longrightarrow \Sym^n \left (\Jac^0(X) \right ).
$$
Then, the flat line bundle $(\check{\Ll}^{(n)}, \check{\nabla}^{(n)})$ gives a flat line bundle $p^*\left (\check{\Ll}^{(n)}, \check{\nabla}_\Ll^{(n)} \right )$ on $\Sym^n(\M_1)$ and further a hyperholomorphic sheaf 
$$
(\Lll, \nabla_\Lll) = c_*p^*(\check{\Ll}^{(n)}, \check{\nabla}_\Ll^{(n)})
$$
on the Cartan locus $\Car$. Consider the pair 
$$
\Ccc(\Ll) := (\Car, (\Lll, \nabla_\Lll)).
$$ 
The above discussion implies the following.
\begin{proposition}\label{prop Car}
$\Ccc(\Ll)$ is a $\BBB$-brane on $\M_n$, which we call {\it Cartan $\BBB$-brane associated to the line bundle $\Ll \to X$}.
\end{proposition}

Note that the image of the Cartan locus under the Hitchin map coincides with the locus of totally reduced spectral curves,
\[
h(\Car) \cong \V.
\]

We finish this section with a description of the intersection of the Cartan locus with a generic Hitchin fibre associated to a nodal curve. Recall that the push-forward map $\check{\nu}$ is an injective morphism as we have seen in Lemma \ref{lm spec Bor}.

\begin{proposition} \label{pr intersection of the BBB with the Hitchin fibre}
For any $v \in \V^\nod$, one has
\[
h^{-1}(v) \cap \Car = \check{\nu}\left ( \Jac^{\ol{0}}(\wt{X}_v) \right ) \cong \Jac^{\ol{0}}(\wt{X}_v).
\]
Consider the isomorphism  
\begin{equation}\label{eq iso pic0specn}
m:\Jac^{\, \overline{0}}(\wt{X}_v) \cong \Jac^0(X)^{\times n}
\end{equation}
induced by the ordering $(X_1, X_2,\dots, X_n)$ of the connected components of $\wt{X}$. One has that under the isomorphism $m$:

\begin{enumerate}
\item \label{it description of spectral datum in Car} The spectral datum  $L\in  \check{\nu}\left ( \Jac^{\ol{0}}(\wt{X}_v) \right )$ corresponding to $\bigoplus_{i=1}^n (L_i,\alpha_i)\in \Car$ is taken to $(L_1,\dots, L_n)\in \Jac^{\ol{0}}(\wt{X})^{\times n}$. Namely, $L=\nu_*F=\bigoplus_j(\iota_j)_*L_j$ where $\iota_j$ is as in \eqref{eq not curves} and $F\in\Jac(\wt{X})$ restricts to $F|_{X_j}=L_j$.

\item \label{it restriction of BBB to Hitchin fibre} The restriction of $\Lll \to \Car$ to $h^{-1}(v) \cap \Car$ corresponds to $\check{\Ll}^{\boxtimes{n}} \to \Jac^0(X)^{\times n}$ defined in Lemma \ref{lm descent sym}.
\end{enumerate}
\end{proposition}

\begin{proof}
\eqref{it description of spectral datum in Car} By construction, a Higgs bundle in $\Car$ decomposes as a direct sum of line bundles,
$$
(E,\varphi) \cong \bigoplus_{i = 1}^n (L_i, \alpha_i).
$$
After Corollary \ref{co description check nu}, $\check{\nu}(\Jac^{\ol{0}})\subset 
h^{-1}(v) \cap \Car$. Now, let $L\in\ol{\Jac}^\delta(\spec_v)$ be the spectral datum corresponding to and element $(E,\varphi)\in h^{-1}(v) \cap \Car$. It is easy to see that the Higgs bundle is totally decomposable if and only if its 
 $\pi_*\mc{O}_{\spec_v}$-module structure
factors through a $\pi_*\nu_*\mc{O}_{\wt{X}}\cong \mc{O}_X^{\oplus n}$-module structure. Hence $L=\nu_*F$ for some $F\in \Jac(\wt{X})$. Corollary \ref{co description check nu} finishes the proof, as the only possible multidegree is $(0,\dots, 0)$.

\eqref{it restriction of BBB to Hitchin fibre} In order to prove the second statement, note that the isomorphism \eqref{eq iso pic0specn} is totally determined by a choice of an ordering of the connected components of $\wt{X}$, in this case $(X_1,\dots,X_n)$. Now, the choice of such an ordering induces an embedding $j:(\Jac^0(X))^{\times n}\plonge \Sym^n(\Jac^0(X))$ making the following diagram commute:
$$
\xymatrix{
\Jac^0(X))^{\times n}\ar@/_3pc/[dd]_q\ar@{^(->}[d]_j\ar[r]^{\quad m} & h^{-1}(u)\ar@{^(->}[d]^i
\\
\Sym^n(\M_1)\ar[d]_p\ar[r]^{\quad c} & \Car
\\
\Sym^n(\Jac^0(X)), &
}
$$
with $q=p\circ j$ being the usual quotient map. We need to check that
$$
m^*i^* \Lll \cong \check{\Ll}^{\boxtimes{n}}.
$$
But, since the above diagram commutes and $c$ is an injection, the LHS is equal to $j^*c^* \Lll =j^* c^* c_* p^*\check{\Ll}^{(n)} \cong j^*p^*\check{\Ll}^{(n)} \cong q^* \check{\Ll}^{(n)}$ and the statement follows by the construction of $\check{\Ll}^{(n)}$.
\end{proof}



\section{$\BAA$-branes from the unipotent radical of the Borel subgroup}
\label{sc BAA brane}


Recall from Section \ref{sc Higgs moduli spaces} that $\M_n$ is a hyperk\"ahler with $((\Gamma_1, \omega_1), (\Gamma_2, \omega_2), (\Gamma_3,\omega_3))$ being its K\"ahler structures. After \cite{kapustin&witten}, a \emph{$\BAA$-brane} on $\M_n$ is a pair $(\W, (\Gg, \nabla_\Gg))$, with:
\begin{itemize}
\item $\W$ being a complex Lagrangian subvariety of $\M_n$ for the holomorphic symplectic form $\Omega_1 = \omega_2 + \imaginary \omega_3$.
\item $(\Gg,\nabla_\Gg)$ being a flat bundle supported on $\W$.
\end{itemize}

Starting from the line bundle $\Ll \to \Jac^0(X)$, we construct in this section a complex Lagrangian subvariety $\Uni(\Ll)$ of the moduli space of Higgs bundles, mapping to the Cartan locus $\V \subset \H$ of the Hitchin base. As we have seen, $\Uni(\Ll)$ is the support of a $\BAA$-brane after specifying a flat vector bundle on it.

Recall that we have fixed a point $x_0 \in X$. Denote by $\hat{\Ll}$ our topologically trivial line bundle $\Ll \to X$ tensored $\delta/n = (n-1)(g-1)$ times by $\Oo_X(x_0)$, 
\begin{equation} \label{eq definition of hatLl}
\hat{\Ll} := \Ll \otimes \Oo_X(x_0)^{(n-1)(g-1)}.
\end{equation}
Having in mind Proposition \ref{pr Bor 1}, we define the subvariety of $\M_n \times_{\H} \V$,
\begin{equation}\label{eq definition of Uni}
\Uni(\Ll)=\left\{(E,\varphi) \in \Bor \ \left| 
\begin{array}{l}
\exists \, \sigma \in H^0(X,E/\B),\\
\varphi\in H^0(X,E_\sigma(\mathfrak{b})\otimes K),\\
E_{\C}:= E_\sigma/\U \cong (\hat{\Ll}\otimes K^{\otimes 1-n}) \boxplus \dots \boxplus (\hat{\Ll}\otimes K^{-1}) \boxplus \hat{\Ll}.
\end{array}\right. \right\}.
\end{equation}

\begin{proposition}\label{pr Uni closed}
$\Uni(\Ll)$ is closed in $\M_n$.
\end{proposition}

\begin{proof}
Recall that we denoted by $\MMm_n$ the moduli stack of rank $n$ and degree $0$ Higgs bundles and its semistable locus by $\MMm^{\sst}_n\subset\MMm_n$. Recall as well that Theorem \ref{thm:stack_Higgs} (see also the discussion following it) states that $\M_n$ is a good moduli space for $\MMm_n^{\sst}$ and there is a morphism 
$$
\Psi: \MMm^{\sst}_n\longrightarrow\M_n
$$
which induces the quotient topology. 

Let us denote by $\Bbor$ the moduli stack of $\B$-Higgs bundles, that is, the moduli stack classifying pairs $(E_{\B}, \varphi_{\B})$ where $E_{\B}$ is a holomorphic $\B$-bundle and $\varphi_{\B}$ is an element of $H^0(X, E_{\B}(\lie{b}) \otimes K)$. By extension of structure group $\B \hookrightarrow \GL(n,\CC)$ one gets a morphism
\[
\mathbcal{i} : \Bbor \to \mathbcal{M}_n. 
\]
Recalling Theorem \ref{thm:stack_Higgs}, and the definition of $\Bor$, we see that the restriction of $\mathbcal{i}(\Bbor)$ to the semistable locus $\MMm^{\sst}_n$ of $\MMm_n$ surjects to $\Bor$. Also, one can construct the following projection
\[
\morph{\Bbor}{\Jac(X)^n}{(E_{\B}, \varphi_{\B})}{E_{\C} = E_{\B}/\U.}{}{\mathbcal{j}}
\]
Both $\mathbcal{i}$ and $\mathbcal{j}$ are algebraic morphisms hence smooth. Consider the substack of $\MMm_n$ given by 
\[
\mathbcal{U\! n\! i}(\Ll) := \mathbcal{i}\left ( \mathbcal{j}^{-1}((\hat{\Ll}\otimes K^{\otimes 1-n}) \boxplus \dots \boxplus (\hat{\Ll}\otimes K^{-1}) \boxplus \hat{\Ll}) \right ).
\]
Again, thanks to Theorem \ref{thm:stack_Higgs} and the construction of $\Uni(\Ll)$, we have that the restriction to the semistable locus, $\mathbcal{U\! n\! i}(\Ll)^{\sst}:=\mathbcal{U\! n\! i}(\Ll)\cap\MMm^{\sst}_n$, surjects to $\Uni(\Ll)$. Note that $\mathbcal{j}^{-1}((\hat{\Ll}\otimes K^{\otimes 1-n}) \boxplus \dots \boxplus (\hat{\Ll}\otimes K^{-1}) \boxplus \hat{\Ll})$ is a closed substack of $\Bbor$ as it is the preimage of a closed point, then $\mathbcal{U\! n\! i}(\Ll)$ is closed insidse $\mathbcal{i} \left ( \Bbor \right )$. We now observe that it is enough to prove that $\mathbcal{i}(\Bbor)$ is closed in $\MMm_n$ as this would imply that $\mathbcal{U\! n\! i}(\Ll)$ is closed in $\MMm_n$. Now, by Theorem \ref{thm:stack_Higgs} the previous discussion implies that $\mathbcal{U\! n\! i}(\Ll)^{\sst}$ is closed inside $\MMm_n^{\sst}$, and thus maps onto a closed subset, proving the statement.


Now, universal closedness of $\mathbcal{i}(\Bbor)$ follows from the valuative criterion, as the image of $\Bbor$
has a universal bundle $(\mathbcal{E},\mathbf{\Phi})$ admitting a reduction of the structure group to $\B$. Given a discrete valuation ring $R$ with fraction field $k$, properness of $\GL_n(\CC)/\B$ ensures that the existence of a reduction of the structure group over $\Spec(k)$ extends uniquely to $\Spec(R)$. This proves the valuative criterion for the bundle. Now, assume that the universal Higgs field defines a $\B$-equivariant morphism
$$
\phi:\mathbcal{E}_{\B}|_{\Spec(k)}\longrightarrow \mathfrak{b}\otimes K,
$$
where $\mathbcal{E}_{\B}$ denotes the universal bundle together with a reduction to $\B$. Since $\phi$ extends to $\phi':\mathbcal{E}|_{\Spec(R)}\longrightarrow \mathfrak{gl}(n,\CC)\otimes K$, closedness of $\mathfrak{b}\subset\mathfrak{gl}(n,\CC)$ \'etale local  triviality of $\mathbcal{E}|_{\Spec(R)}$ do the rest.

\end{proof}

In order to prove that $\Uni(\Ll)$ is an isotropic submanifold of $(\M_n,\Omega_1)$ we first give a description of it in gauge theoretic terms. Let $\EE$ denote the topologically trivial rank $n$ vector bundle; choose a reduction of the structure group to $\B$ (which always exists), and let $\EE_{\B}$ be the corresponding principal $\B$-bundle, so that $\EE\cong\EE_{\B}(\GL(n,\CC))$. Define $\EE_{\C}=\EE_{\B}/\U$. It follows from \eqref{eq definition of Uni} that 
\begin{equation}\label{eq dual brane}
\Uni(\Ll)=\left\{
(\ol{\partial}_A,\varphi)\in \M_n\  \left| \begin{array}{l}
\exists g\in \mc{G} \ {\rm satisfying } \\
\ 1) \ g\cdot\ol{\partial}=\ol{\partial}_{\C}+N, {\rm where }\\ 
\phantom{XX}N\in \Omega^{0,1}(X,\EE_{\B}(\mathfrak{n})),
\\
\phantom{XX}
(\EE_{\C},\ol{\partial}_{\C}) = (\hat{\Ll}\otimes K^{\otimes 1-n}) \boxplus \dots \boxplus (\hat{\Ll}\otimes K^{-1}) \boxplus \hat{\Ll};
\\
\ 2)\ g\cdot\varphi\in \Omega^0(X,\EE_{\B}(\mathfrak{b})\otimes K).
\end{array}\right.\right\}.
\end{equation}
\begin{remark}
Both $\Car$ and $\Uni(\Ll)$ are subvarieties of $\M_n \times_{\H} \V$, but they do not intersect, as the elements of $\Car\cap\Uni(\Ll)$ would have underlying bundle of the form $E_{\C}$ in \eqref{eq definition of Uni}, which is unstable, and totally decomposable Higgs field, conditions which yield unstable Higgs bundles. 
\end{remark}

\begin{proposition} \label{pr uni isotropic}
The complex subvariety $\Uni(\Ll)$ of $\M_n$ is isotropic with respect to the symplectic form $\Omega_1$ defined in \eqref{eq Omega1}.
\end{proposition}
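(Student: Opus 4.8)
The strategy is to work entirely on the gauge-theoretic side, using the description \eqref{eq dual brane} of $\Uni(\Ll)$ and the formula \eqref{eq Omega1} for $\Omega_1$. After acting by a gauge transformation $g \in \mc{G}^c$, a point of $\Uni(\Ll)$ is represented by a pair $(\ol{\partial}_A, \varphi)$ with $\ol{\partial}_A = \ol{\partial}_{\C} + N$, $N \in \Omega^{0,1}(X, \EE_{\B}(\mathfrak{n}))$, and $\varphi \in \Omega^{0}(X, \EE_{\B}(\mathfrak{b}) \otimes K)$, where $\EE_{\B}(\mathfrak{n})$, $\EE_{\B}(\mathfrak{b})$ denote the associated bundles for the adjoint action of $\B$ on the Lie algebras $\mathfrak{n} = \mathrm{Lie}(\U)$ and $\mathfrak{b} = \mathrm{Lie}(\B)$. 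The first step is to identify the tangent space $T_{(\ol{\partial}_A,\varphi)} \Uni(\Ll)$ inside $T^*\!\Aa$: since the holomorphic structure on $\EE_{\C}$ is fixed and only the $\mathfrak{n}$-part of $\ol{\partial}_A$ and the $\mathfrak{b}$-part of $\varphi$ are allowed to vary (together with the $\mc{G}^c$-orbit directions), a tangent vector $(\dot A, \dot\varphi)$ is, up to an infinitesimal gauge transformation, of the form $\dot A \in \Omega^{0,1}(X, \EE_{\B}(\mathfrak{n}))$ and $\dot\varphi \in \Omega^{0}(X, \EE_{\B}(\mathfrak{b}) \otimes K)$.

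**Key computation.** The core of the argument is then the pairing: for two such tangent vectors $(\dot A_i, \dot\varphi_i)$, $i = 1,2$, one has
$$
\Omega_1\!\left((\dot A_1, \dot\varphi_1),(\dot A_2,\dot\varphi_2)\right) = \int_X \langle \dot A_1, \dot\varphi_2 \rangle - \langle \dot A_2, \dot\varphi_1 \rangle,
$$
where the integrand involves the Killing form applied to an element of $\mathfrak{n}$ (from $\dot A_i$) and an element of $\mathfrak{b}$ (from $\dot\varphi_j$). The point is that $\mathfrak{n}$ is the nilradical and $\mathfrak{b}$ its normalizing Borel, so $\mathfrak{n}$ is orthogonal to $\mathfrak{b}$ under the Killing form: indeed $\langle \mathfrak{n}, \mathfrak{b}\rangle = 0$ because $\mathfrak{b} = \mathfrak{c} \oplus \mathfrak{n}$, $\langle \mathfrak{n}, \mathfrak{n}\rangle = 0$ (nilpotent elements with only strictly-upper-triangular entries pair to a strictly-upper-triangular, hence trace-zero, product — more precisely $[\mathfrak{n},\mathfrak{n}] \subset \mathfrak{n}$ and the trace form vanishes on $\mathfrak{n}$), and $\langle \mathfrak{n}, \mathfrak{c} \rangle = 0$ since the product of a diagonal matrix and a strictly upper triangular one is again strictly upper triangular, hence traceless. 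Consequently each term $\langle \dot A_i, \dot\varphi_j \rangle$ vanishes pointwise on $X$, so the integral is zero and $\Omega_1$ restricts to zero on $T\Uni(\Ll)$.

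**Care needed / main obstacle.** I should be slightly careful with two points. First, the tangent space to $\Uni(\Ll)$ is a quotient by the image of the infinitesimal gauge action, so I must check that the pairing is well defined on the quotient: this follows because $\Omega_1$ descends to $\M_n$ by construction (the gauge-theoretic formula \eqref{eq Omega1} is the descended form), so it suffices to evaluate it on the representatives $\dot A_i \in \Omega^{0,1}(\EE_{\B}(\mathfrak{n}))$, $\dot\varphi_i \in \Omega^0(\EE_{\B}(\mathfrak{b}) \otimes K)$, and one does not need to track the gauge directions separately. Second, I must ensure that the subbundle structures $\EE_{\B}(\mathfrak{n}) \subset \EE_{\B}(\mathfrak{b}) \subset \mathrm{End}(\EE)$ are genuinely preserved under the local deformations cutting out $\Uni(\Ll)$, i.e. that a curve in $\Uni(\Ll)$ really has derivative of the claimed shape; this is exactly the content of the defining conditions in \eqref{eq dual brane}, where the reduction to $\B$ and the fixed graded bundle $\EE_{\C}$ constrain variations to lie in $\mathfrak{n}$ (for $\ol{\partial}_A$) and $\mathfrak{b}$ (for $\varphi$). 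The only genuine subtlety — and the step I expect to require the most care — is the orthogonality computation phrased invariantly: the Killing form here is, as the paper notes, a nondegenerate extension of the trace form to the center, and I should verify that $\langle \mathfrak{n}, \mathfrak{b} \rangle = 0$ for this extended form and not merely for the trace form, which is immediate since $\mathfrak{n}$ lies in the semisimple part $\mathfrak{sl}_n$ and the center pairs trivially with $\mathfrak{sl}_n$. With the orthogonality in hand the isotropy is immediate; the dimension count upgrading "isotropic" to "Lagrangian" is deferred to Theorem \ref{tm uni Lagrangian} and is not needed here.
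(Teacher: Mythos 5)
Your proposal is correct and follows essentially the same route as the paper: represent tangent vectors, up to the infinitesimal gauge action, as elements of $\Omega^{0,1}(X,\EE_{\B}(\mathfrak{n}))\times\Omega^0(X,\EE_{\B}(\mathfrak{b})\otimes K)$ and conclude from gauge invariance of $\Omega_1$ together with $\mathfrak{n}\subset\mathfrak{b}^{\perp}$ for the (extended) Killing form. The only difference is cosmetic: you spell out the orthogonality computation and the well-definedness on the gauge quotient, which the paper asserts, while the paper additionally notes that it suffices to argue at stable points (whose existence is deferred to Proposition \ref{pr filtration for L-normalized}).
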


\begin{proof}
It is enough to prove the statement for open subset of stable points in $\Uni(\Ll)$. We will check this subset is non empty in Proposition \ref{pr filtration for L-normalized}. 

So let $(E,\varphi)\in\Uni(\Ll)$ be a stable point. By \eqref{eq dual brane}, a vector $(\dot{A},\dot{\varphi})\in T_{(E,\varphi)}\M_n$ satisfies that, up to the adjoint action of the gauge Lie algebra,  
$$
(\dot{A},\dot{\varphi})\in\Omega^{0,1}\left(X,\EE_{\B}(\mathfrak{n})\right)\times \Omega^0(X,\EE_{\B}(\mathfrak{b})\otimes K).
$$
The result follows from gauge invariance of the symplectic form $\Omega_1$ and the fact that 
$\mathfrak{n}\subset\mathfrak{b}^{\perp}$, 
where orthogonality is taken with respect to the Killing form.
\end{proof}

We now give a description of the spectral data of the Higgs bundles corresponding to the points of $\Uni(\Ll)$. We will focus on the open subset of those Higgs bundles whose spectral data is a line bundle. This will allow us to show that this subvariety is mid-dimensional, and, after Proposition \ref{pr uni isotropic}, Lagrangian. 

\begin{proposition} \label{pr spectral data of Uni}
Let $\hat{\Ll}$ be defined as in \eqref{eq definition of hatLl}. For every $v \in \V^\nod$, one has the following identification inside $h^{-1}(v)$,
\begin{equation} \label{eq description of the Hitchin fibre of Uni}
\Uni(\Ll) \cap \Jac^\delta(\ol{X}_v) = \left \{ L \in{\Jac}^{\, \delta}(\ol{X}_v) \textnormal{ such that } \nu^*L =p^*\hat{\Ll}\cong\left ( \hat{\Ll}, \dots, \hat{\Ll} \right ) \right \}.
\end{equation}
Furthermore, Higgs bundles described in \eqref{eq description of the Hitchin fibre of Uni} are stable.
\end{proposition}

\begin{proof}
Thanks to Proposition \ref{pr description line bundles on V^nod}, we have that the spectral datum $L$ of any $(E,\varphi)\in \Uni(\Ll)\cap\Jac^\delta(\spec_v)$ satisfies
 $$
 \hat{\Ll}=\alpha_{i}^*\iota_{i}^*L.
 $$
Now, since any line bundle on $\wt{X}_v$ is totally determined by its restriction to all the connected components, 
it is enough to check that
$j_i^*p^*\hat{\Ll}=j_i^*\nu^*L$, which follows from commutativity of the arrows in \eqref{eq not curves} and the fact that $\alpha_i:X\to X_i$ is an isomorphism. This concludes the proof.

 

\end{proof}



The description of the spectral data given in Proposition \ref{pr spectral data of Uni} allows us to study the dimension of $\Uni(\Ll)$, which turns up to be one half of $\dim \M_n$.

\begin{proposition} \label{pr mid-dimensionality of Uni}
The complex subvariety $\Uni(\Ll)$ of $\M_n$ has dimension
$$
\dim \Uni(\Ll) = n^2 (g-1) + 1 = \frac{1}{2} \dim \M_n.
$$
\end{proposition}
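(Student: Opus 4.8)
The plan is to read off $\dim\Uni(\Ll)$ from the restricted Hitchin map $h\colon\Uni(\Ll)\to\V$, whose image lies in $\V$ by Theorem \ref{tm Bor}(i). First I would check that this map is dominant. By Proposition \ref{pr filtration for L-normalized}(i), for every $v\in\V^\nod$ the locally free part of the fibre, $\Uni(\Ll)\cap\Jac^\delta(\ol{X}_v)$, equals the set of line bundles $L$ on $\ol{X}_v$ with $\nu^*L\cong(\hat{\Ll},\dots,\hat{\Ll})$; this is non-empty because the pullback map $\hat{\nu}$ of \eqref{eq hatnu} is a surjective fibration. Since $\V^\nod$ is dense in $\V$ and $\dim\V=ng$ by \eqref{eq dim Vn}, the number we want is $ng$ plus the dimension of a generic fibre.

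Next I would compute that generic fibre. Fix $v\in\V^\nod$, so that $\ol{X}_v=\bigcup_{i=1}^nX_i$ has exactly $\delta=(n^2-n)(g-1)$ nodes by Lemma \ref{lemma spectral curve}. The locally free part $\Uni(\Ll)\cap\Jac^\delta(\ol{X}_v)$ is, by Proposition \ref{pr filtration for L-normalized}(i) again, a single fibre of $\hat{\nu}\colon\Jac(\ol{X}_v)\to\Jac(\wt{X}_v)$, hence isomorphic to $(\CC^\times)^{\delta-n+1}$ (by \eqref{eq hatnu}, equivalently Lemma \ref{lm fibration picards} with $R=D$), and these sheaves are stable. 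The remaining, non-locally-free strata lie inside the loci of Proposition \ref{pr filtration for L-normalized}(ii): over a partial normalization $\specn_R$ with $R\neq\emptyset$ such a stratum consists of line bundles with prescribed restriction to every irreducible component of $\specn_R$, hence is a torus of dimension $(\delta-|R|)-n+n_R$, where $n_R$ is the number of connected components of $\specn_R$. Because any two components of $\ol{X}_v$ meet in $2g-2\geq2$ points, no node of $\ol{X}_v$ is separating, so $n_R\leq|R|$ and such a stratum has dimension at most $\delta-n$, strictly below that of the locally free part. Hence every fibre of $h|_{\Uni(\Ll)}$ over $\V^\nod$ has dimension $\delta-n+1$, which gives $\dim\Uni(\Ll)\geq ng+(\delta-n+1)=ng+(n^2-n)(g-1)-n+1=n^2(g-1)+1$ after simplification using \eqref{eq delta}.

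For the reverse inequality I would invoke Proposition \ref{pr uni isotropic}: $\Uni(\Ll)$ is isotropic for $\Omega_1$, and by Proposition \ref{pr filtration for L-normalized}(i) its generic points are stable, hence smooth points of $\M_n$. At such a point the tangent space to $\Uni(\Ll)$ is an isotropic subspace of the symplectic vector space $(T\M_n,\Omega_1)$, so every irreducible component of $\Uni(\Ll)$ meeting the smooth locus has dimension at most $\tfrac12\dim\M_n=n^2(g-1)+1$ by \eqref{eq dim M_n}. That this bounds the whole of $\Uni(\Ll)$ follows from its irreducibility, which is visible from the gauge description \eqref{eq dual brane}: there $\Uni(\Ll)$ appears as the image, under the gauge equivalence relation, of the product of the affine space of $\B$-holomorphic structures on $\EE_{\B}$ inducing the fixed associated graded with the vector space $\Omega^0(X,\EE_{\B}(\mathfrak b)\otimes K)$ of $\mathfrak b$-valued Higgs fields, that is, a continuous image of an irreducible variety. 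Combining with the lower bound yields $\dim\Uni(\Ll)=n^2(g-1)+1=\tfrac12\dim\M_n$.

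The hard part is the global bookkeeping rather than any single computation. Over $\V^\nod$ the fibre dimension is pinned down exactly, the ``no separating node'' observation being precisely what makes the non-locally-free strata negligible; but one still has to prevent the fibres over $\V\setminus\V^\nod$ --- where the spectral curve need not be a reduced union of copies of $X$ and Proposition \ref{pr filtration for L-normalized} gives no information --- from producing components of $\Uni(\Ll)$ of dimension larger than $n^2(g-1)+1$. The isotropy inequality of Proposition \ref{pr uni isotropic} disposes of this uniformly; if one wished to avoid it, one would instead rely on the irreducibility of $\Uni(\Ll)$ to conclude directly that its dimension equals $\dim\V$ plus the dimension of a generic fibre.
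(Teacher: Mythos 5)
Your proof follows essentially the same route as the paper's: the lower bound comes from fibring over $\V$ with generic fibre $\hat{\nu}^{-1}(\hat{\Ll},\dots,\hat{\Ll})\cong(\CC^\times)^{\delta-n+1}$ over $\V^\nod$, and the upper bound comes from the isotropy established in Proposition \ref{pr uni isotropic}. Your additional checks --- that the non-locally-free strata of the fibre have strictly smaller dimension, and that fibres over $\V\setminus\V^\nod$ cannot produce larger components --- are correct and merely make explicit what the paper asserts implicitly.
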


\begin{proof}
First, we observe that $\Uni(\Ll)$ is a fibration over $\V$ and recall that $\dim \V = n g$. By Proposition \eqref{pr filtration for L-normalized}, over the dense open subset $\V^\nod \subset \V$, the fibre of $\Uni(\Ll)|_{\V^\nod} \to \V^\nod$ at $v$ has a dense open subset 
$$
\hat{\nu}^{-1}(\hat{\Ll}, \dots, \hat{\Ll}) \subset \ol{\Jac}^{\, \delta}(\ol{X}_v) \cong h^{-1}(v),
$$
where we recall the pull-back map described in \eqref{eq hatnu}. Now, by Corollary \ref{co hat nu}, 
$$
\hat{\nu}^{-1}(\hat{\Ll}, \dots, \hat{\Ll})\cong \left(\CC^\times\right)^{\delta-n+1}.
$$
By smoothness of the point, the Hitchin fiber is transverse to the (local) Hitchin section, so 
\begin{align*}
\dim \Uni(\Ll)|_{\V^{\nod}} = & \dim \V^\nod + \dim \hat{\nu}^{-1} \left( \hat{\Ll}, \dots, \hat{\Ll}  \right)
\\
= & ng + \delta - n + 1
\\
= & ng + (n^2 - n)(g - 1) - n + 1
\\
= & n^2(g - 1) + 1.
\end{align*}
which is half of the dimension of $\M_n$, as we recall from \eqref{eq dim M_n}. This finishes the proof since by Proposition \ref{pr uni isotropic}, $\Uni(\Ll)$ is isotropic, so its dimension can not be greater than $\frac{1}{2}\dim \M_n$.
\end{proof}

Finally, we can state the main result of the section.

\begin{theorem} \label{tm uni Lagrangian}
The complex subvariety $\Uni(\Ll)$ of $\M_n$ is a closed complex Lagrangian with respect to $\Omega_1$. 
\end{theorem}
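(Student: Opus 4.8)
The plan is to read off the theorem from the two preceding propositions, since it is the standard fact that a mid-dimensional complex isotropic subvariety of a holomorphic symplectic manifold is Lagrangian. Concretely, Proposition~\ref{pr uni isotropic} gives that $\Uni(\Ll)$ is isotropic for $\Omega_1$, i.e.\ at any point where both $\M_n$ and $\Uni(\Ll)$ are smooth one has $T\Uni(\Ll)\subseteq T\Uni(\Ll)^{\perp_{\Omega_1}}$, and Proposition~\ref{pr mid-dimensionality of Uni} gives $\dim\Uni(\Ll)=\tfrac12\dim\M_n$. In a symplectic vector space of dimension $2N$, an isotropic subspace $W$ of dimension $k$ satisfies $\dim W^{\perp}=2N-k$; taking $k=N$ forces $W=W^{\perp}$. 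Applying this fibrewise yields that $\Uni(\Ll)$ is complex Lagrangian.

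The one point that needs to be spelled out is that this linear-algebra argument has a nonempty (indeed dense) locus of validity inside $\Uni(\Ll)$: one must exhibit points of $\Uni(\Ll)$ that are smooth points of $\M_n$. This is exactly what Proposition~\ref{pr filtration for L-normalized}(i) provides, as it produces \emph{stable} Higgs bundles in $\Uni(\Ll)\cap h^{-1}(v)$ for every $v\in\V^{\nod}$; at such a stable point $\M_n$ is smooth and $\Omega_1$ is genuinely symplectic, and $\Uni(\Ll)$ is smooth there because near it $\Uni(\Ll)$ agrees with the fibration over $\V^{\nod}$ with fibre $(\CC^\times)^{\delta-n+1}$ described via Lemma~\ref{lm fibration picards}. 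Hence the set of points of $\Uni(\Ll)$ at which the tangent space equals its $\Omega_1$-orthogonal is dense, which is the assertion.

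There is no genuine obstacle left at this stage: the substantive work has already been carried out in proving isotropy (the gauge-theoretic computation resting on $\mathfrak n\subset\mathfrak b^{\perp}$ for the Killing form) and in the dimension count (the spectral-data description together with the fibre computation of Lemma~\ref{lm fibration picards}). The proof of the theorem itself is a one-line deduction from Propositions~\ref{pr uni isotropic} and~\ref{pr mid-dimensionality of Uni}.
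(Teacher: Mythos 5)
Your proof is correct and is exactly the paper's argument: the theorem is deduced directly from Proposition~\ref{pr uni isotropic} (isotropy) and Proposition~\ref{pr mid-dimensionality of Uni} (mid-dimensionality), via the standard fact that a mid-dimensional isotropic subvariety is Lagrangian. Your additional remark about the existence of stable (hence smooth) points coming from Proposition~\ref{pr filtration for L-normalized}(i) is a useful clarification that the paper leaves implicit.
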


\begin{proof}
This is clear after Propositions \ref{pr Uni closed}, \ref{pr uni isotropic} and \ref{pr mid-dimensionality of Uni}.
\end{proof}


Thanks to Proposition \ref{pr Bor 2} we have at hand a description of every point in the Hitchin fibers over $\V^\nod$. Hence we can study the intersection of these fibres with $\Uni(\Ll)$ as we will do in the remaining of the section. Before stating the result we need some extra definitions. Let $v = (\alpha_1, \dots, \alpha_n)_{\sym_n}$ in $\V^\nod$ giving the spectral curve $\ol{X}_v$ with singular divisor $D\subset \ol{X}_v$, and let $R\subset D$ be a subdivisor. We have seen that $\ol{X}_v$ has $n$ irreducible components $X_i = \alpha_i(X)$ and recall that we have set $D_{ij} = X_i \cap X_j$. For each ordering $J = (\alpha_{j_1}, \dots, \alpha_{j_n})$ of the set $\{\alpha_1,\dots,\alpha_n\}$, define the divisors
\begin{equation}\label{eq BJi}
B_{J,i}:=\sum_{i'\geq i+1}D_{j_i j_{i'}}\cap R.
\end{equation}
Set also
\[
b_{J,i} := |B_{J,i}|. 
\]

\begin{proposition} \label{pr filtration for L-normalized}
Let $\hat{\Ll}$ be defined as in \eqref{eq definition of hatLl} and let $v \in \V^\nod$ with spectral curve $\ol{X}_v$ and divisor of singularities $D$. Chose $R \subset D$ 
and consider the associated desingularization $\wt{X}_R$ of $\ol{X}_v$. Then, for any $n$-tuple of integers $\ol{d} = (d_1, \dots, d_n)$, we have the following identifications inside $h^{-1}(v)$,
\begin{equation} \label{eq description of the Hitchin fibre of Uni bis}
\Uni(\Ll) \cap \check{\nu}_R \left ( \Jac^{\ol{d}}(\wt{X}_R)\right ) = \left \{
L \in \Jac^{\ol{d}}(\wt{X}_R) \ 
 \left| \begin{array}{l}
\exists \, J = (\alpha_{j_1}, \dots, \alpha_{j_n}) \textnormal{ ordering of } \{\alpha_1, \dots, \alpha_n \}  \\
\textnormal{such that, for all } 1 \leq i \leq n, \textnormal{ we have: }\\
\, a) \, d_i = \delta - b_{J,i} \textnormal{ and}\\
\, b) \, L|_{X_{j_i}} \cong \hat{\Ll} \otimes \Oo(B_{J,i}).
\end{array}\right.
\right \},
\end{equation}
when $R_s = \emptyset$ and $\ol{d}$ satisfies $b)$ for some ordering $J$, and 
\[
\Uni(\Ll) \cap \check{\nu}_R \left ( \Jac^{\ol{d}}(\wt{X}_R)\right ) = \emptyset,
\]
in contrary case.
\end{proposition}

\begin{proof}
Recall the notation of Proposition \ref{pr Bor 1}. Take $(E,\varphi) \in h^{-1}(v)$ where $v \in \V^\nod$ is given by $(\alpha_1, \dots, \alpha_n)_{\sym_n}$. Note that $(E,\varphi)\in\Uni(\Ll)$ if and only there exists an ordering $J = (\alpha_{j_1}, \dots, \alpha_{j_n})$ and a filtration 
$$
0=(E_0,\varphi_0)\subsetneq (E_1,\varphi_1)\subsetneq\cdots\subsetneq (E_n,\varphi_n)=(E,\varphi)
$$
such that
$$
\quotient{(E_i,\varphi_i)}{(E_{i-1},\varphi_{i-1})}\cong (\hat{\Ll}\otimes K^{i-1},\alpha_{j_i}).
$$
The statement then follows from Proposition \ref{pr Bor 2}, noting that 
$$
\nu_R^*\left ( K^{i-n} \otimes \Oo(-B_{J,i}) \right )=\Oo \left ( \sum_{i'\geq i+1}\wt{X}_{j_i}\cap\wt{X}_{j_{i'}} \right).
$$
\end{proof}

\section{Duality}
\label{sc duality}

In this section we discuss about the duality under mirror symmetry of the $\BBB$-brane $\CCar(\Ll)$, and a $\BAA$-brane supported on $\Uni(\Ll)$. Ideally, we would like to transform them under a Fourier--Mukai transform between coarse compactified Jacobians of reducible curves. Since such a tool is unavailable, we will make use of the integral functor $\Phi$ between the corresponding moduli stacks. Since the Cartan locus $\Car$ and the Jacobian $\Jac^{\delta}(\overline{X})$ are both fine moduli spaces, we will restrict the Poincar\'e sheaf $\overline{\PPp}$ to $\Car$ on one side and $\Jac^0(\overline{X})$ on the other, obtaining an integral functor $\Phi^{\Car}$ between their derived categories of sheaves. As we will see in this section, $\Phi^{\Car}$ sends our $\BBB$-brane $\CCar(\Ll)$ to the trivial sheaf supported on $\Uni(\Ll)$ what provides evidence of a duality statement between them. A note of warning should be added here: ongoing work by Arinkin and Pantev \cite{pantev} shows that the integral functor $\Phi$ on the stack of Higgs bundles over totally reducible spectral curves need not preserve semistability \cite{pantev}. We do not see this phenomenon occuring here, as we pick the target of $\Phi^\Car$ to be the Jacobian, although this should be taken into account when studying the transform of $\CCar(\Ll)$ under the whole integral functor $\Phi$. 

Recall that in our case, the normalization $\wt{X}_v$ is the disjoint union $\bigsqcup_i X_i$ of copies of the base curve $X$, which is smooth. Then, the direct product of Jacobians $\prod_i \Jac^0(X_i)$ is the moduli space classifying line bundles of multidegree $\overline{0}$, which is a fine moduli space with universal line bundle $\wt{\Uu}$. The restriction of each $X_i$ is a line bundle over an irreducible smooth curve, hence simple. It then follows that the associated moduli stack is
\[
\Jjac^{\overline{0}}(\wt{X}_v) \cong \left [ \quotient{\Jac^{\ol{0}}(\wt{X}_v)}{(\CC^*)^{\times n}}  \right ] \cong \prod_{i = 1}^n \left [ \quotient{\Jac^{\ol{0}}(X_i)}{\CC^*}  \right ],
\]
where each $\CC^*$ acts trivially. Recall also that the restriction of the Cartan locus $\Car$ to the Hitchin fibre associated to $\overline{X}_v$ is $\check{\nu} \left ( \Jac^{\ol{0}}(\wt{X}_v) \right )$. Note that this is a fine moduli space with universal sheaf
\[
\Uu^{\Car} := (\nu \times \check{\nu})_* \wt{\Uu} \longrightarrow \ol{X}_v \times \check{\nu} \left ( \Jac^{\ol{0}}(\wt{X}_v) \right ).
\]
We consider the substack $\check{\nu} \left ( \Jjac^{\overline{0}}(\wt{X}_v) \right )$ of $\Jjac^{\, \delta}(\overline{X}_v)$. By all of the above, we have that 
\[
\check{\nu} \left ( \Jjac^{\ol{0}}(\wt{X}_v) \right ) \cong \left [ \quotient{\check{\nu} \left (\Jac^{\ol{0}}(\wt{X}_v) \right )}{(\CC^*)^{\times n}}  \right ], 
\]
and the restriction of the universal sheaf $\UUu|_{\overline{X}_v \times \check{\nu} \left ( \Jac^{\ol{0}}(\wt{X}_v) \right )}$ pulls-back to $\Uu^\Car$ under the obvious projection
\begin{equation} \label{eq stack surjection for nu Jac}
\check{\nu} \left (\Jac^{\ol{0}}(\wt{X}_v) \right ) \longrightarrow \left [ \quotient{\check{\nu} \left (\Jac^{\ol{0}}(\wt{X}_v) \right )}{(\CC^*)^{\times n}}  \right ].
\end{equation}

It follows from a result of Mumford (see for instance \cite[Theorem 2, Section 8.2]{neron}) that the Jacobian of degree $\delta$ line bundles over a reduced curve $\overline{X}_v$ is a fine moduli space $\Jac^\delta(\overline{X}_v)$ with universal line bundle $\Uu^0 \to \ol{X}_v \times \Jac^{\delta}(\ol{X}_v)$. Since line bundles are simple, one the has that the corresponding moduli stack is the quotient stack
\[
\Jjac^{\, \delta}(\overline{X}_v) \cong \left [ \quotient{\Jac^{\, \delta}(\overline{X}_v)}{\CC^*}  \right ],
\]
for the trivial action of $\CC^*$. One trivially has that $\Uu^0$ is the pull-back of $\UUu^0$ under the projection
\begin{equation} \label{eq stack surjection for Jac}
\Jac^{\, \delta}(\overline{X}_v) \longrightarrow \left [ \quotient{\Jac^{\, \delta}(\overline{X}_v)}{\CC^*}  \right ].
\end{equation}

With $\Uu^0$ and $\Uu^\Car$ we already have all the ingredients for the following definition, analogous to \eqref{eq definition Poincare bundle}, of a Poincar\'e bundle over $\check{\nu} \left ( \Jac^{\ol{0}}(\wt{X}_v) \right ) \times \Jac^{\delta}(\ol{X}_v)$,
\begin{equation} \label{eq definition Cartan Poincare sheaf}
\Pp^\Car := \Dd_{f_{23}} \left ( f_{12}^*\Uu^\Car \otimes f_{13}^* \Uu^0  \right )^{-1} \otimes \Dd_{f_{23}} \left ( f_{13}^* \Uu^0  \right ) \otimes \Dd_{f_{23}} \left ( f_{12}^* \Uu^\Car  \right ),
\end{equation}
where the $f_{ij}$ are the corresponding projections from $\ol{X}_v \times \check{\nu} \left ( \Jac^{\ol{0}}(\wt{X}_v) \right ) \times \Jac^{\delta}(\ol{X}_v)$ to the product of the $i$-th and $j$-th factors. 

We can see that $\Pp^\Car$ is obtained from the restriction of the Poincar\'e sheaf $\overline{\PPp}$ to the Cartan locus and the Jacobian of $\overline{X}_v$.

\begin{proposition} \label{pr meaning of Pp^Car}
The sheaf $\Pp^\Car$ is the pull-back of $\overline{\PPp}|_{\check{\nu} \left ( \Jjac^{\ol{0}}(\wt{X}_v) \right ) \times \Jjac^\delta(\overline{X})}$ under the product of morphisms \eqref{eq stack surjection for nu Jac} and \eqref{eq stack surjection for Jac}. 
\end{proposition}

\begin{proof}
Since $\overline{\PPp}$ extends $\PPp \to \overline{\Jjac}^{\, \delta}(\overline{X}_v) \times \Jjac^{\, \delta}(\overline{X}_v)$, we have from \eqref{eq definition Poincare bundle} that
\begin{align*}
\overline{\PPp}|_{\check{\nu} \left ( \Jjac^{\ol{0}}(\wt{X}_v) \right ) \times \Jjac^\delta(\overline{X})} & \cong \PPp|_{\check{\nu} \left ( \Jjac^{\ol{0}}(\wt{X}_v) \right ) \times \Jjac^\delta(\overline{X})}
\\
& \cong \Dd_{f_{23}} \left ( f_{12}^*\UUu|_{\overline{X}_v \times \check{\nu} \left ( \Jjac^{\ol{0}}(\wt{X}_v) \right )} \otimes f_{13}^* \UUu^0  \right ) \otimes \Dd_{f_{23}} \left ( f_{13}^* \UUu^0  \right )^{-1}
\\
& \qquad \otimes \Dd_{f_{23}} \left ( f_{12}^* \UUu|_{\overline{X}_v \times \check{\nu} \left ( \Jjac^{\ol{0}}(\wt{X}_v) \right )}  \right )^{-1}.
\end{align*}
Then, the result follows from the observation that $\Uu^\Car$ is the pull-back of $\check{\nu} \left ( \Jjac^{\ol{0}}(\wt{X}_v) \right )$ under \eqref{eq stack surjection for nu Jac}, and $\Uu^0$ is the pull-back of $\UUu^0$ under \eqref{eq stack surjection for Jac}.
\end{proof}

Let us consider the integral functor associated to $\Pp^\Car$, 
\begin{equation} \label{eq Cartan FM restricted to Pic}
\morph{D^b \left ( \check{\nu} \left ( \Jac^{\ol{0}}(\wt{X}_v) \right ) \right )}{D^b \left ( \Jac^\delta(\ol{X}_v) \right )}{\Ee^\bullet}{R \pi_{2,*}(\pi_1^*\Ee^\bullet \otimes \Pp^\Car),}{}{\Phi^\Car}
\end{equation}
where $\pi_1$ and $\pi_2$ to be, respectively, the projection from $\check{\nu} \left ( \Jac^{\ol{0}}(\wt{X}_v) \right ) \times \Jac^\delta(\ol{X}_v)$ to the first and second factors.

Recall that our $\BBB$-brane $\CCar(\Ll)$ is given by the hyperholomorphic bundle $\Lll$ supported on $\Car$. By Proposition \ref{pr intersection of the BBB with the Hitchin fibre}, over the dense open subset $\V^\nod$ of the Cartan locus of the Hitchin base $\V = h(\Car) \subset \H$, the hyperholomorphic sheaf $\Lll$ restricted to a certain Hitchin fibre $\overline{\Jac}^{\, \delta}(\ol{X}_v)$ is $\check{\nu}_*\check{\Ll}^{\boxtimes n}$, supported on $\check{\nu}(\Jac^{\overline{\delta}}(\wt{X}_v))$. The main result of this section is the study of the behaviour of $\check{\nu}_*\check{\Ll}^{\boxtimes n}$ under $\varphi^{\Car}$, but first we need some technical results.

Fix $x_0$ and take the line bundle $\Oo(x_0)^{(n-1)(g-1)}$. Denote 
$$
\tau : \Jac^{\ol{0}}(\wt{X}) \stackrel{\cong}{\longrightarrow} \Jac^{\ol{\delta}}(\wt{X})
$$
the isomorphism given, on each of the components, by tensorization by the previous line bundle. We can define a Poincar\'e bundle $\wt{\Pp} \to \Jac^{\overline{0}}(\wt{X}_v) \times \Jac^{\overline{\delta}}(\wt{X}_v)$.

Consider the projections to the first and second factors
$$
\xymatrix{
 &  \Jac^{\overline{0}}(\wt{X}_v) \times \Jac^{\overline{\delta}}(\wt{X}_v) \ar[ld]_{\widetilde{\pi}_1} \ar[rd]^{\widetilde{\pi}_2} &
\\
\Jac^{\overline{0}}(\wt{X}_v) & & \Jac^{\overline{\delta}}(\wt{X}_v),
}
$$
and, using $\wt{\Pp}$, one can construct another Fourier--Mukai integral functor
$$
\morph{D^b(\Jac^{\overline{0}}(\wt{X}_v))}{D^b(\Jac^{\overline{\delta}}(\wt{X}_v))}{\Ee^\bullet}{R\widetilde{\pi}_{2,*} (\widetilde{\pi}_1^*\Ee^\bullet \otimes \wt{\Pp}).}{}{\wt{\Phi}}
$$
Note that $\wt{\Phi}$ is governed by the usual Fourier--Mukai transform on each of the $\Jac^0(X_i)$. We need the following lemma in order to describe the interplay between $\Phi^\Car$ and $\wt{\Phi}$. 

\begin{lemma} \label{lm relation of Poincares}
One has that
$$
\left ( \check{\nu} \times \id_{\Jac}  \right )^*\Pp^\Car \cong (\id_{\widetilde{\Jac}} \times \hat{\nu})^*\wt{\Pp}. 
$$
\end{lemma}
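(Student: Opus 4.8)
The plan is to unwind the determinant-of-cohomology description \eqref{eq definition Cartan Poincare sheaf} of $\Pp^\Car$, together with the analogous description of $\wt\Pp$ --- the formula \eqref{eq definition Poincare sheaf} applied to the curve $\wt X_v$ (equivalently, the usual Poincar\'e bundle on each factor $\Jac^0(X_i)$), with universal line bundles $\wt\Uu$ of multidegree $\ol{0}$ and $\wt\Uu^{\ol{\delta}}$ of multidegree $\ol{\delta}$ --- and to transport both sides through the normalization $\nu\colon\wt X_v\to\ol X_v$. The two properties of the determinant of cohomology I will need are its compatibility with arbitrary base change and the identity $\Dd_{f\circ g}(\mathscr{E})\cong\Dd_f(g_*\mathscr{E})$, valid whenever $g$ is finite (then $g$ is affine and proper, so $Rg_*=g_*$ and $R(f\circ g)_*=Rf_*\circ g_*$). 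Since $\nu$ is finite, $\nu_*$ is exact and both properties apply to it.

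First I would compute $(\check\nu\times\id)^*\Pp^\Car$. As $\check\nu$ is an isomorphism onto its image, $\Uu^\Car=(\nu\times\check\nu)_*\wt\Uu$ gives $(\id_{\ol X_v}\times\check\nu)^*\Uu^\Car\cong(\nu\times\id)_*\wt\Uu$ over $\ol X_v\times\Jac^{\ol{0}}(\wt X_v)$. Writing $\mu=\nu\times\id\times\id$ for the corresponding map of triple products and a subscript $12$ (resp.\ $13$) for the pullback to the triple product along the projection to the first two (resp.\ first and third) factors, the leading determinant of \eqref{eq definition Cartan Poincare sheaf} becomes $\Dd_{f_{23}}\!\big(\mu_*\wt\Uu_{12}\otimes f_{13}^*\Uu^0\big)$. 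The projection formula gives $\mu_*\wt\Uu_{12}\otimes f_{13}^*\Uu^0\cong\mu_*\big(\wt\Uu_{12}\otimes(\nu^*\Uu^0)_{13}\big)$, where $\nu^*\Uu^0$ is the pullback to $\wt X_v\times\Jac^\delta(\ol X_v)$ of the degree-$\delta$ universal bundle; the finite-pushforward identity (using $f_{23}\circ\mu=\wt f_{23}$) then moves this determinant onto $\wt X_v$, turning it into $\Dd_{\wt f_{23}}\!\big(\wt\Uu_{12}\otimes(\nu^*\Uu^0)_{13}\big)$. Handling the other two determinants the same way, I obtain
\begin{equation*}
(\check\nu\times\id)^*\Pp^\Car\;\cong\;\Dd_{\wt f_{23}}\!\big(\wt\Uu_{12}\otimes(\nu^*\Uu^0)_{13}\big)^{-1}\otimes\Dd_{f_{23}}\!\big(f_{13}^*\Uu^0\big)\otimes\Dd_{\wt f_{23}}(\wt\Uu_{12}).
\end{equation*}

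Next I would run the identical manipulation for $(\id\times\hat\nu)^*\wt\Pp$. Base change along $\hat\nu$ in the second factor replaces $\wt\Uu^{\ol{\delta}}$ by $(\id_{\wt X_v}\times\hat\nu)^*\wt\Uu^{\ol{\delta}}$ over $\wt X_v\times\Jac^\delta(\ol X_v)$; its fibre over $J\in\Jac^\delta(\ol X_v)$ is $\nu^*J$, so after absorbing a line bundle from the base into the choice of $\wt\Uu^{\ol{\delta}}$ (which is only defined up to such a twist) it is $\nu^*\Uu^0$. With this substitution, $(\id\times\hat\nu)^*\wt\Pp$ agrees term by term with the displayed expression, except that its middle (normalization) factor is $\Dd_{\wt f_{23}}\!\big((\nu^*\Uu^0)_{13}\big)$, a determinant over $\wt X_v$ rather than over $\ol X_v$; tensoring the structure sequence \eqref{eq ses structure normal} of the normalization by $\Uu^0$ and pushing forward shows the two middle factors differ only by a line bundle pulled back from $\Jac^\delta(\ol X_v)$, concentrated at the $\delta$ nodes. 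Absorbing this last base twist into the normalization of $\wt\Pp$ --- legitimate, since a Poincar\'e bundle on a product of Jacobians is pinned down only up to a twist by a line bundle from one factor --- yields the claimed isomorphism.

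The part I expect to require the most care is precisely this normalization bookkeeping: reconciling the three universal-bundle choices $\Uu^0$, $\wt\Uu$, $\wt\Uu^{\ol{\delta}}$, the base-twist freedom of $\wt\Pp$, and the $\ol X_v$-versus-$\wt X_v$ discrepancy of determinants of cohomology coming from \eqref{eq ses structure normal}. I would organize everything one multidegree at a time --- so that $\hat\nu$, which a priori spreads $\Jac^\delta(\ol X_v)$ over several multidegree components of $\Jac(\wt X_v)$, is handled componentwise, using that $\check\nu$, $\hat\nu$ and every determinant operation above respects the multidegree decomposition --- after which what remains is a routine chase through base change and the projection formula.
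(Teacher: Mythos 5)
Your argument is correct (at the level of rigour of the paper's own proof) but is organized differently from the one in the text. The paper first invokes the universal property of $\wt{\Pp}$: since $\left(\check{\nu}\times\id\right)^*\Pp^\Car$ is a family of line bundles on $\Jac^{\ol{0}}(\wt{X}_v)$ parametrized by $\Jac^\delta(\ol{X}_v)$, it must be $(\id\times t)^*\wt{\Pp}$ for a classifying map $t$, and the whole proof then reduces to the fibrewise identification $t=\hat{\nu}$; this is carried out by restricting to each $J\in\Jac^\delta(\ol{X}_v)$, using the formula \eqref{eq description of Pp_J}, base change for $\Dd$, and additivity applied to a short exact sequence $0\to\Uu^\Car_0\to\Uu^\Car\to\Uu^\Car/\Uu^\Car_0\to 0$ with $(\nu\times\check{\nu})^*\Uu^\Car_0\cong\wt{\Uu}$ and skyscraper quotient at the nodes, whose determinant is insensitive to twisting by $f_1^*J$. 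You instead match the two global determinant-of-cohomology expressions directly, which forces you to use two tools the paper avoids (the projection formula and $\Dd_{f\circ\nu}\cong\Dd_f\circ\nu_*$ for the finite map $\nu$) and to compare the ``middle'' factors via the structure sequence \eqref{eq ses structure normal}; in exchange you never need the universal property of $\wt{\Pp}$, and you make explicit the residual twist by a line bundle pulled back from $\Jac^\delta(\ol{X}_v)$. Be aware that this last point is slightly more delicate than ``absorb it into $\wt{\Pp}$'': the residual twist lives on $\Jac^\delta(\ol{X}_v)$ and need not descend along the $(\CC^\times)^{\delta-n+1}$-fibration $\hat{\nu}$, so it cannot in general be pushed into a renormalization of $\wt{\Pp}$ on $\Jac^{\ol{0}}(\wt{X}_v)\times\Jac^{\ol{\delta}}(\wt{X}_v)$; it can only be absorbed into the choice of $\Uu^0$ or regarded as an ambiguity inherent in both kernels. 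This caveat is shared by the paper's proof (the classifying-map step likewise only determines the family up to such a twist, and the fibrewise computation cannot detect it), and it is harmless for the intended application, since a twist pulled back from the target of $\Phi^\Car$ does not change the support computed in Corollary \ref{co support of the dual brane in the Hitchin fibre}. Your remark about treating one multidegree component of $\Jac^\delta(\ol{X}_v)$ at a time addresses a genuine imprecision in the statement that the paper glosses over.
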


\begin{proof}
Note that $\left ( \check{\nu} \times \id_{\Jac}  \right )^*\Pp^\Car$ is a family of line bundles over $\Jac^{\overline{0}}(\wt{X})$ para\-me\-trized by $\Jac^{\delta}(\ol{X}_v)$. Since $\wt{\Pp} \to \Jac^{\overline{0}}(\wt{X}_v) \times \Jac^{\overline{0}}(\wt{X}_v)$ is a universal family for these objects, there exists a map
$$
t : \Jac^{\delta}(\ol{X}_v) \longrightarrow \Jac^{\overline{0}}(\wt{X}_v),
$$
such that 
$$
\left ( \check{\nu} \times \id_{\Jac}  \right )^*\Pp^\Car \cong (\id_{\widetilde{\Jac}} \times t)^*\wt{\Pp}. 
$$
Recall the description of $\Pp_J$ given in \eqref{eq description of Pp_J} for each $J \in \Jac^{\delta}(\ol{X}_v)$. Recall as well the projections $f_1 : \ol{X}_v \times \overline{\Jac}^{\, \delta}(\ol{X}_v) \to \ol{X}_v$ and $f_2 : \ol{X}_v \times \overline{\Jac}^{\, \delta}(\ol{X}_v) \to \overline{\Jac}^{\, \delta}(\ol{X}_v)$, and consider the following commuting Cartesian diagram
$$
\xymatrix{
\ol{X}_v \times \Jac^{\overline{0}}(\wt{X}_v) \ar[rr]^{\id_{\wt{X}} \times \check{\nu}} \ar[d]_{f'_2} & & \ol{X}_v \times \overline{\Jac}^{\, \delta}(\ol{X}_v) \ar[d]^{f_2}
\\
\Jac^{\overline{0}}(\wt{X}_v) \ar[rr]^{\check{\nu}} & & \overline{\Jac}^{\, \delta}(\ol{X}_v)
}
$$
We know from \cite[Proposition 44 (1)]{esteves} that the determinant of cohomology commutes with base change, {\it i.e.} 
\begin{equation}\label{eq:basechange}
\check{\nu}^* \Dd_{f_2} = \Dd_{f'_2} (\id_{\ol{X}} \times  \check{\nu})^*. 
\end{equation}

Consider the obvious projection $\wt{f}_2 : X_\gamma \times \Jac^{\overline{0}}(\wt{X}_v) \to \Jac^{\overline{0}}(\wt{X}_v)$. Since the following diagram commutes,
\[
\xymatrix{
\wt{X}_v \times \Jac^{\overline{0}}(\wt{X}_v) \ar[rr]^{\nu \times \id_{\widetilde{\Jac}}} \ar[rrd]_{\widetilde{f}_2} & & \ol{X}_v \times \Jac^{\overline{0}}(\wt{X}_v) \ar[d]^{f'_2}
\\
& & \Jac^{\overline{0}}(\wt{X}_v),
}
\]
the definition of the determinant of cohomology ensures that
\begin{equation} \label{eq determinant of coh and pushforward}
\Dd_{f'_2} (\nu \times \id_{\widetilde{\Jac}})_* \cong \Dd_{\widetilde{f}_2}.
\end{equation}

One also has that the following diagrams commute
\begin{equation} \label{eq comm diagram for f}
\xymatrix{
\ol{X}_v \times \Jac^0(X_\gamma) \ar[rr]^{\id_{\ol{X}} \times \check{\nu}} \ar[rrd]_{f'_1} & & \ol{X}_v \times \overline{\Jac}^\delta(\ol{X}_v) \ar[d]^{f_1}
\\
 & & \ol{X}_v   ,}
\end{equation}
and
\[
\xymatrix{
 & & & \wt{X}_v \times \Jac^{\overline{0}}(\wt{X}_v) \ar[llld]_{\nu \circ \widetilde{f}_1} \ar[rrrd]^{\widetilde{f}_2} \ar[d]^{(\nu \times \id_{\widetilde{\Jac}})} & & &
\\
\ol{X}_v & & & \ol{X}_v \times \Jac^{\overline{0}}(\wt{X}_v) \ar[lll]^{f'_1} \ar[rrr]_{f'_2} & & & \Jac^{\overline{0}}(\wt{X}_v).
}\]
As a consequence, one has that $f'_1((f'_2)^{-1}(U)) = \nu \widetilde{f}_1 (\widetilde{f}_2^{-1}(U))$ for every open subset $U \subset \Jac^{\overline{0}}(\wt{X}_v)$. It then follows from the definition of pull-back and pusforward that, for any $J \in \Jac^\delta(\ol{X}_v)$, 
\begin{align*}
(f'_2)_* (f'_1)^* J(U) = & \lim_{\tiny{W \supseteq f'_1((f'_2)^{-1}(U))}} J(W)
\\
= & \lim_{\tiny{W \supseteq f'_1((f'_2)^{-1}(U))}} J(W) 
\\
= & (\widetilde{f}_2)_*(\nu \circ \widetilde{f}^1)^* J(U), 
\end{align*}
so $(f'_2)_* (f'_1)^* = (\widetilde{f}_2)_*(\nu \circ \widetilde{f}_1)^*$ and therefore, 
\begin{equation} \label{eq relation of det coh for M}
\Dd_{f'_2} (f'_1)^* \cong \Dd_{\widetilde{f}_2} \widetilde{f}_1^* \nu^*.
\end{equation}

Recalling the definition of $\Uu^\Car$ as $(\nu \times \check{\nu})_*\wt{\Uu}$, we observe that
\begin{equation} \label{eq relation between Uu_b and Uu_gamma}
(\id_{\ol{X}} \times \check{\nu})^* \Uu^\Car \cong (\nu \times \id_{\widetilde{\Jac}})_* \wt{\Uu}.
\end{equation}

Using the projection formula and \eqref{eq:basechange}--\eqref{eq relation between Uu_b and Uu_gamma}, we have that, for any $J \in \Jac^\delta(\ol{X}_v)$, 
\begin{align*}
\wt{\Pp}_{t(J)} \cong \, & \check{\nu}^*\Pp_J^\Car
\\
\cong \, &  \check{\nu}^* \left ( \Dd_{f_2} \left (\Uu^\Car \otimes f_1^*J \right )^{-1} \otimes \Dd_{f_2}(f_1^*J) \otimes \Dd_{f_2}\left (\Uu^\Car \right ) \right )
\\
\cong \, &  \check{\nu}^* \Dd_{f_2} \left (\Uu^\Car \otimes f_1^*J \right )^{-1} \otimes \check{\nu}^* \Dd_{f_2}(f_1^*J) \otimes \check{\nu}^* \Dd_{f_2}\left (\Uu^\Car \right )
\\
\cong \, &  \Dd_{f'_2} \left ( (\id_{\ol{X}} \times  \check{\nu})^* \left (\Uu^\Car \otimes f_1^*J \right) \right )^{-1} \otimes \Dd_{f'_2} \left ( (\id_{\ol{X}} \times  \check{\nu})^*(f_1^*J) \right ) \otimes \Dd_{f'_2} \left ( (\id_{\ol{X}} \times  \check{\nu})^* \Uu^\Car \right )
\\
\cong \, &  \Dd_{f'_2} \left ( (\id_{\ol{X}} \times  \check{\nu})^* \Uu^\Car \otimes (f'_1)^*J \right )^{-1} \otimes \Dd_{f'_2} \left ( (f'_1)^*J) \right ) \otimes \Dd_{f'_2} \left ( (\id_{\ol{X}} \times  \check{\nu})^* \Uu^\Car \right )
\\
\cong \, &  \Dd_{f'_2} \left ( (\nu \times  \id_{\widetilde{\Jac}})_* \wt{\Uu} \otimes (f'_1)^*J \right )^{-1} \otimes \Dd_{f'_2} \left ( (f'_1)^*J) \right ) \otimes \Dd_{f'_2} \left ( (\nu \times  \id_{\widetilde{\Jac}})_* \wt{\Uu} \right )
\\
\cong \, &  \Dd_{f'_2} \left ( (\nu \times  \id_{\widetilde{\Jac}})_*  \left (\wt{\Uu} \otimes \widetilde{f}_1^* \nu^*J \right ) \right )^{-1} \otimes \Dd_{f'_2} \left ( (f'_1)^*J) \right ) \otimes \Dd_{f'_2} \left ( (\nu \times  \id_{\widetilde{\Jac}})_* \wt{\Uu} \right )
\\ 
\cong \, &  \Dd_{\widetilde{f}_2} (\wt{\Uu} \otimes \wt{f}_1^*\nu^*J)^{-1}  \otimes \Dd_{\widetilde{f}_2} \left (\wt{f}_1^*\nu^*J \right ) \otimes \Dd_{\widetilde{f}_2} (\wt{\Uu})
\\ 
\cong \, & \wt{\Pp}_{\nu^*J}
\\ 
\cong \, & \wt{\Pp}_{\hat{\nu}(J)}.
\end{align*}
This implies that $t = \hat{\nu}$, thus completing the proof.

\end{proof}

We can now study the image of $\check{\nu}_*(\check{\Ll}^{\boxtimes n})$ under \eqref{eq Cartan FM restricted to Pic}.  

\begin{proposition}
	One has the isomorphism
	$$
	\Phi^{\Car}(\check{\nu}_*(\check{\Ll}^{\boxtimes n})) \cong \hat{\nu}^* \wt{\Phi}(\check{\Ll}^{\boxtimes n}),
	$$
	and furthermore, $\hat{\nu}^* \wt{\Phi}(\check{\Ll}^{\boxtimes n})$ is a complex supported on degree $g$ given by $\hat{\nu}^* \Oo_{(\check{\Ll}^{\boxtimes n})}$.
\end{proposition}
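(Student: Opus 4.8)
\emph{Proof sketch (proposal).} The plan is to establish the two assertions in turn. The identification $\Phi^{\Car}(\check\nu_*(\Ll^{\boxtimes n})) \cong \hat\nu^*\wt\Phi(\Ll^{\boxtimes n})$ will follow formally from Lemma \ref{lm relation of Poincares} by base change and the projection formula, while the explicit description of $\wt\Phi(\Ll^{\boxtimes n})$ comes from the classical Fourier--Mukai transform on the abelian variety $\Jac^{\ol{0}}(\wt{X}_v) \cong \Jac^0(X)^{\times n}$.

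For the comparison, I would write $\Phi^{\Car}(\check\nu_*(\Ll^{\boxtimes n})) = R\pi_{2,*}\!\big(\pi_1^*\check\nu_*(\Ll^{\boxtimes n}) \otimes \Pp^{\Car}\big)$ and push the morphism $\check\nu \times \id$ through it. Since $\check\nu$ is injective by Lemma \ref{lm check nu} and proper, it is finite, so flat base change identifies $\pi_1^*\check\nu_*(\Ll^{\boxtimes n})$ with the pushforward under $\check\nu\times\id$ of the pullback of $\Ll^{\boxtimes n}$ (the direct images being underived, as $\check\nu$ is affine), and the projection formula then isolates $(\check\nu\times\id)^*\Pp^{\Car}$; by Lemma \ref{lm relation of Poincares} this equals $(\id\times\hat\nu)^*\wt\Pp$. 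Pushing $\check\nu\times\id$ through $R\pi_{2,*}$ as well, and then extracting $\id\times\hat\nu$ from the remaining direct image by a second flat base change --- this time along $\hat\nu$, which is smooth and hence flat by Lemma \ref{lm fibration picards} --- one is left exactly with $\hat\nu^*\wt\Phi(\Ll^{\boxtimes n})$. Every step here is a diagram chase once Lemma \ref{lm relation of Poincares} is available.

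For $\wt\Phi(\Ll^{\boxtimes n})$, I would use the chosen ordering of the components to split $\Jac^{\ol{0}}(\wt{X}_v) \cong \prod_{i=1}^n \Jac^0(X_i)$; under this splitting $\wt\Pp$ is, after the translation $\tau$ by $\Oo_X(x_0)^{(n-1)(g-1)}$ on each factor, the external tensor product of the Poincar\'e bundles of the principally polarized abelian varieties $\Jac^0(X_i)$. Since $\Ll^{\boxtimes n} = \Ll\boxtimes\cdots\boxtimes\Ll$ by Lemma \ref{lm descent sym}, the transform factors, $\wt\Phi(\Ll^{\boxtimes n}) \cong \tau_*\big(S_1(\Ll)\boxtimes\cdots\boxtimes S_n(\Ll)\big)$, where $S_i$ denotes the Fourier--Mukai transform of $\Jac^0(X_i)$. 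Being topologically trivial, $\Ll$ is the degree-zero line bundle attached to a point $\xi$ of $\widehat{\Jac^0(X)} \cong \Jac^0(X)$, and the basic computation on an abelian variety gives that each $S_i(\Ll)$ is a skyscraper sheaf at $\xi$ concentrated in a single cohomological degree; taking the $n$-fold external product and applying $\tau_*$ produces a skyscraper sheaf on $\Jac^{\ol{\delta}}(\wt{X}_v) \cong \prod_i \Jac^{(n-1)(g-1)}(X_i)$ supported at the point all of whose coordinates equal $\xi\otimes\Oo_X(x_0)^{(n-1)(g-1)}$. By the definition \eqref{eq definition of hatLl} of $\hat\Ll$ together with the standard identification of the restriction $j_{x_0}^*\Ll$ with the point of $\Jac^0(X)$ representing $\xi$, this point is precisely $\hat\Ll^{\boxtimes n}$, so $\wt\Phi(\Ll^{\boxtimes n}) \cong \Oo_{(\hat\Ll^{\boxtimes n})}$ in that degree; pulling back along the flat morphism $\hat\nu$ of \eqref{eq hatnu} preserves the concentration, whence $\hat\nu^*\wt\Phi(\Ll^{\boxtimes n}) \cong \hat\nu^*\Oo_{(\hat\Ll^{\boxtimes n})} = \Oo_{\hat\nu^{-1}(\hat\Ll^{\boxtimes n})}$, whose support is $\Uni(\Ll)\cap\Jac^\delta(\ol{X}_v)$ by Proposition \ref{pr filtration for L-normalized}.

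I expect the main obstacle to be the final identification of the support point: one has to keep track of the translation $\tau$, the base point $x_0$, and the sign conventions built into the Poincar\'e bundle in order to pin the support of $\wt\Phi(\Ll^{\boxtimes n})$ down to $\hat\Ll^{\boxtimes n}$ rather than to a twist of it, the essential input being that the restriction of the Poincar\'e bundle of $\Jac^0(X)$ along the Abel--Jacobi embedding $j_{x_0}$ is the normalized Poincar\'e bundle of $X$. This is careful bookkeeping rather than a genuine difficulty; the comparison $\Phi^{\Car}(\check\nu_*(\Ll^{\boxtimes n})) \cong \hat\nu^*\wt\Phi(\Ll^{\boxtimes n})$, by contrast, is purely formal.
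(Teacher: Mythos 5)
Your proposal is correct and follows essentially the same route as the paper: the comparison $\Phi^{\Car}(\check{\nu}_*(\Ll^{\boxtimes n})) \cong \hat{\nu}^*\wt{\Phi}(\Ll^{\boxtimes n})$ is obtained by the same chain of base-change and projection-formula steps hinging on Lemma \ref{lm relation of Poincares}, the injectivity of $\check{\nu}$ and the flatness of $\hat{\nu}$, and the identification of $\wt{\Phi}(\Ll^{\boxtimes n})$ with the skyscraper at $\hat{\Ll}^{\boxtimes n}$ is the same component-by-component reduction to the classical Fourier--Mukai transform on $\Jac^0(X)$ that the paper invokes (you merely spell out the bookkeeping of the translation $\tau$ and the base point $x_0$ in more detail).
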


\begin{proof}
Let us also consider the following maps
$$
\xymatrix{
 & \Jac^{\overline{0}}(\wt{X}_v) \times \Jac^{\delta}(\ol{X}_v) \ar[ld]_{\pi'_1} \ar[rd]^{\pi'_2}   &
\\
\Jac^{\overline{0}}(\wt{X}_v) &   & \Jac^{\delta}(\ol{X}_v),
}
$$
and observe that 
\begin{itemize}
 \item $\pi_2' = \pi_2 \circ (\check{\nu} \times \id_{\Jac})$, 
 \item $\pi_1' = \widetilde{\pi}_1 \circ (\id_{\widetilde{\Jac}} \times \hat{\nu})$, 
 \item $\pi_1 \circ (\check{\nu} \times \id_{\Jac}) = \check{\nu} \circ \pi_1'$, and
 \item $\widetilde{\pi}_2 \circ (\id_{\widetilde{\Jac}} \times \hat{\nu}) = \hat{\nu} \circ \pi_2'$.
\end{itemize}
Recalling Lemma \ref{lm relation of Poincares}, that $\check{\nu}$ is an injection and that $\hat{\nu}$ is flat by Lemma \ref{lm fibration picards}, one has the following, 
\begin{align*}
\Phi^{\Car}(\check{\nu}_*(\check{\Ll}^{\boxtimes n})) = & R\pi_{2,*} \left ( \pi_1^*\check{\nu}_* ( \check{\Ll}^{\boxtimes n}) \otimes \Pp^\Car \right )
\\
\cong & R\pi_{2,*} \left ( R(\check{\nu} \times \id_{\Jac})_* (\pi'_1)^*(\check{\Ll}^{\boxtimes n}) \otimes \Pp^\Car \right )
\\
\cong & R\pi_{2,*}R \left ( \check{\nu} \times \id_{\Jac})_* ((\pi'_1)^*(\check{\Ll}^{\boxtimes n}) \otimes (\check{\nu} \times \id_{\Jac})^*\Pp^\Car \right )
\\
\cong & R\pi_{2,*}R(\check{\nu} \times \id_{\Jac})_*  \left( (\pi'_1)^* (\check{\Ll}^{\boxtimes n}) \otimes (\id_{\widetilde{\Jac}} \times \hat{\nu})^*\wt{\Pp} \right )
\\
\cong & R\pi'_{2,*} \left ((\pi'_1)^*(\check{\Ll}^{\boxtimes n}) \otimes (\id_{\widetilde{\Jac}} \times \hat{\nu})^*\wt{\Pp} \right )
\\
\cong & R\pi'_{2,*} \left ( (\id_{\widetilde{\Jac}} \times \hat{\nu})^* \widetilde{\pi}_1^*(\check{\Ll}^{\boxtimes n}) \otimes (\id_{\widetilde{\Jac}} \times \hat{\nu})^* \wt{\Pp} \right )
\\
\cong & R\pi'_{2,*} (\id_{\widetilde{\Jac}} \times \hat{\nu})^* \left ( \widetilde{\pi}_1^*(\check{\Ll}^{\boxtimes n}) \otimes \wt{\Pp} \right )
\\
\cong & \hat{\nu}^* R\widetilde{\pi}_{2,*} \left (\widetilde{\pi}_1^*(\check{\Ll}^{\boxtimes n}) \otimes \wt{\Pp} \right )
\\
\cong & \hat{\nu}^* \wt{\Phi}(\check{\Ll}^{\boxtimes n}).
\end{align*}
Finally, recalling that the usual Fourier--Mukai transform on $\Jac^0(X) \times \Jac^{\delta/n}(X)$ sends the line bundle $\check{\Ll}$ to the (complex supported on degree $g$ given by) sky-scraper sheaf $\Oo_{\hat{\Ll}}$, we have that $\Phi^\Car(\check{\nu}_*\check{\Ll}^{\boxtimes n})$ is (the complex supported on degree $g$ given by)  
$$
\hat{\nu}^* \wt{\Phi}(\check{\Ll}^{\boxtimes n}) \cong \hat{\nu}^* \Oo_{(\hat{\Ll}^{\boxtimes n})},
$$
and the proof is complete.
\end{proof}

Recalling Proposition \ref{pr filtration for L-normalized}, we arrive to the main result of the section, which shows that our $\BBB$-brane $\CCar(\Ll)$ and our $\BAA$-brane $\UUni(\Ll)$ are related under the Fourier--Mukai integral functor $\Phi^\Car$.

\begin{corollary} \label{co support of the dual brane in the Hitchin fibre}
For every $v \in \V^\nod$, the support of the image under $\Phi^\Car$ of the $\BBB$-brane $\CCar(\Ll)$ restricted to a Hitchin fibre $h^{-1}(v)$, is the support of our $\BAA$-brane $\UUni(\Ll)$ restricted to the open subset of the (dual) Hitchin fibre given by the locus of invertible sheaves,
$$
\supp \left ( \Phi^\Car \left ( \check{\nu}_*(\check{\Ll}^{\boxtimes n}) \right ) \right ) = \Uni(\Ll) \cap \Jac^\delta(\ol{X}_v).
$$
\end{corollary}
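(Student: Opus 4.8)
The plan is to deduce this almost immediately from the Proposition just proved, together with Proposition~\ref{pr filtration for L-normalized}(i) and Lemma~\ref{lm fibration picards}. First I would recall the set-up: by Proposition~\ref{pr intersection of the BBB with the Hitchin fibre}, over $\V^\nod$ the restriction of the hyperholomorphic sheaf $\Lll$ of the brane $\CCar(\Ll)$ to a Hitchin fibre $h^{-1}(v)\cong\ol{\Jac}^{\,\delta}(\ol X_v)$ is the sheaf $\check{\nu}_*(\Ll^{\boxtimes n})$, supported on $\check{\nu}(\Jac^{\ol 0}(\wt X_v))$, so that the object whose support we must identify is $\Phi^\Car(\check{\nu}_*(\Ll^{\boxtimes n}))$; and by the Proposition just proved this object is quasi-isomorphic to $\hat{\nu}^*\Oo_{(\hat{\Ll}^{\boxtimes n})}$, placed in a single cohomological degree.

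Next I would compute the support of this pullback. Since the support of a complex is insensitive to cohomological shifts, $\supp\big(\Phi^\Car(\check{\nu}_*(\Ll^{\boxtimes n}))\big)=\supp\big(\hat{\nu}^*\Oo_{(\hat{\Ll}^{\boxtimes n})}\big)$. By Lemma~\ref{lm fibration picards} the map $\hat{\nu}\colon\Jac^\delta(\ol X_v)\to\Jac(\wt X_v)$ is a smooth fibration, with fibre $(\CC^\times)^{\delta-n+1}$, in particular flat, so the formation of support commutes with $\hat{\nu}^*$ and $\supp\big(\hat{\nu}^*\Oo_{(\hat{\Ll}^{\boxtimes n})}\big)=\hat{\nu}^{-1}\big(\supp\Oo_{(\hat{\Ll}^{\boxtimes n})}\big)$. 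The sheaf $\Oo_{(\hat{\Ll}^{\boxtimes n})}$ is the structure sheaf of the reduced point $(\hat{\Ll},\dots,\hat{\Ll})$ of $\Jac^{\ol\delta}(\wt X_v)\cong\Jac^{\delta/n}(X)^{\times n}$, so its support is that single point, and therefore $\supp\big(\Phi^\Car(\check{\nu}_*(\Ll^{\boxtimes n}))\big)=\hat{\nu}^{-1}\big((\hat{\Ll},\dots,\hat{\Ll})\big)$.

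Finally I would invoke Proposition~\ref{pr filtration for L-normalized}(i): equation~\eqref{eq description of the Hitchin fibre of Uni} says precisely that $\hat{\nu}^{-1}\big((\hat{\Ll},\dots,\hat{\Ll})\big)=\{\,L\in\Jac^\delta(\ol X_v)\ :\ \nu^*L\cong(\hat{\Ll},\dots,\hat{\Ll})\,\}$ coincides with $\Uni(\Ll)\cap\Jac^\delta(\ol X_v)$, which is the asserted equality. There is no real geometric obstacle in this last corollary; the only point requiring attention is purely a matter of normalisation, namely to make sure that the point at which the Fourier--Mukai transform concentrates is exactly the point $(\hat{\Ll},\dots,\hat{\Ll})$ occurring in~\eqref{eq definition of Uni}, i.e.\ that the twist by $\Oo_X(x_0)^{(n-1)(g-1)}$ built into $\hat{\Ll}$ in~\eqref{eq definition of hatLl} absorbs the degree shift inherent in the transform. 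This is exactly what the isomorphism $\tau$ and the chain of identifications in the proof of the preceding Proposition record, so for the Corollary nothing further is needed.
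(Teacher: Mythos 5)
Your argument is correct and follows exactly the route the paper intends: the corollary is stated as an immediate consequence of the preceding Proposition (which identifies $\Phi^\Car(\check{\nu}_*(\Ll^{\boxtimes n}))$ with $\hat{\nu}^*\Oo_{(\hat{\Ll}^{\boxtimes n})}$ up to shift) together with Proposition~\ref{pr filtration for L-normalized}(i), which identifies $\hat{\nu}^{-1}(\hat{\Ll},\dots,\hat{\Ll})$ with $\Uni(\Ll)\cap\Jac^\delta(\ol X_v)$. Your extra remarks on supports commuting with pullback (which in fact holds for coherent sheaves under any morphism, by Nakayama, so flatness is not even needed) and on the degree normalisation built into $\hat{\Ll}$ are accurate and only make explicit what the paper leaves implicit.
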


\begin{remark} \label{rk torsion bundles} 
Corollary \ref{co support of the dual brane in the Hitchin fibre} points at a duality between $\CCar(\Ll)$ and $\UUni(\Ll)$.
The piece of work \cite{TorsionBundles} has provided evidence for this fact via a Fourier--Mukai transform. Indeed, when $X$ is an unramified cover of a smooth curve $Y$, there exist submanifolds of $\CCar(\Ll)$ and (unions of) $\UUni(\Ll)$ covering two Fourier--Mukai dual branes on the moduli space of Higgs bundles on $Y$. 
\end{remark}

\section{Parabolic subgroups and branes on the singular locus}
\label{sc singular locus branes}

Cartan branes are the simplest example of branes supported on the singular locus $\M^{\sing}_n$ of the moduli space of Higgs bundles. In this section we first study the other hyperholomorphic subvarieties covering the singular locus, and, in second place, we construct Lagrangian subvarieties paired to them.

\subsection{Levi subgroups and the singular locus}
\label{sc Levi BBB}

Consider the $n$-tuple of positive integers 
\[
\ol{r} = \left ( r_1, \stackrel{m_1}{\dots}, r_1, \dots, r_s, \stackrel{m_s}{\dots}, r_s  \right )
\]
where $0<r_1 < \cdots < r_s$ and set $|\ol{r}| = \sum_{\ell=1}^s m_\ell r_\ell$ and $m_{\ol{r}} = \sum_{\ell = 1}^s m_\ell$. Any maximal rank reductive subgroup of $\GL(n,\CC)$ is conjugate to
\[
\L_{\ol{r}}:=\GL(r_1,\CC)\times \stackrel{m_1}{\dots} \times \GL(r_1,\CC) \times \cdots\times \GL(r_s,\CC)\times \stackrel{m_s}{\dots} \times \GL(r_s,\CC),
\]
where $|\ol{r}| = n$. Denote by $\M_{\ol{r}}\subset \M_{n}$ the image of the moduli space $\M_{\L_{\ol{r}}}$ of $\L_{\ol{r}}$-Higgs bundles. Note that $\M_{\ol{r}}$ is the image of the injective morphism,
\[
c_{\, \ol{r}} : \Sym^{m_1}(\M_{r_1}) \times \dots \times \Sym^{m_s}(\M_{r_s}) \longrightarrow \M_n.
\]

\begin{remark}
In particular, $\Car = \M_{(1,\stackrel{n}{\dots}, 1)}$ for $\ol{r} = (1,\stackrel{n}{\dots}, 1)$.
\end{remark} 

The same arguments as in the case of Cartan subgroups show that this is a complex subscheme in all three complex structures of $\M_n$.

\begin{proposition}
Fix $\ol{r}$ with $|\ol{r}| = n$, and consider $\M_{\ol{r}}\subset\M_{n}$. This subvariety is complex in all three complex structures $\Gamma_1,\Gamma_2,\Gamma_3$, and therefore hyperholomorphic.
\end{proposition}

The union of these subvarieties covers the singular locus of the moduli space of Higgs bundles. 

\begin{proposition}[\cite{simpson2}, Section 11]
The singular locus is the locus of strictly polystable bundles,  
$$
\M^{\sing}_n = \bigcup_{|\ol{r}|=n}\M_{\ol{r}}.
$$
\end{proposition}
	

Denote
\[
\H_{\ol{r}}:=\Sym^{m_1}(\H_{r_1})\times\dots\times \Sym^{m_s}(\H_{r_s})
\]
and, relating the invariant polynomials of $\L_{\ol{r}}$ with those of $\GL(n,\CC)$, construct an injective morphism
\[
\begin{array}{ccc}
\H_{\ol{r}} &\longrightarrow& \H_n.
\end{array}
\]
Note that the image $h(\M_{\ol{r}})$ under the Hitchin map of $\M_{\ol{r}}$
coincides with the image of $\H_{\ol{r}}$ under this morphism.
Write $\H_r^\sm$ for the locus of smooth spectral curves in the Hitchin base and set
\[
\V_{\ol{r}} := \Sym^{m_1}(\H^\sm_{r_1}) \times \dots \times \Sym^{m_s}(\H^\sm_{r_s}).
\]
Every point $\beta \in \V_{\ol{r}}$ is of the form $\beta  = (\beta^1,\dots,\beta^s)$, being $\beta^\ell \in \Sym^{m_\ell}(\H_{r_\ell})$ given by $\beta^\ell = (b^\ell_{1}, \dots, b^\ell_{m_\ell})_{\sym}$ with $b^\ell_{i}=(b^\ell_{i1},\dots,b^\ell_{i r_i})$ and $b^\ell_{ij}\in H^0(X,K^j)$. 

Denote by $\Delta_r$ the big diagonal of $\Sym^r(\H_r)$ and set
\[
\V_{\ol{r}}^\red := \left ( \Sym^{m_1}(\H^\sm_{r_1})  \setminus \Delta_{r_1} \right ) \times \dots \times \left ( \Sym^{m_s}(\H^\sm_{r_s}) \setminus \Delta_{r_s} \right ).
\]
Proceeding as in Lemmas \ref{lemma spectral curve} and \ref{lemma spectral curve red}, one can prove that, for every $\beta \in \V^\red_{\ol{r}}$, the corresponding spectral curve $\spec_\beta$ is reduced  with $m_{\ol{r}}$ irreducible components $\ol{X}_{b^1_1},\dots, \ol{X}_{b^1_{m_1}}, \dots ,\ol{X}_{b^s_1},\dots, \ol{X}_{b^s_{m_s}}$, which are in turn spectral curves for $b^\ell_i\in\H_{r_i}$. Observe that the corresponding $r_i$-to-$1$ spectral covers $\pi^\ell_i : \ol{X}_{b^\ell_i} \to X$ coincide with the restriction of $\pi : \ol{X}_\beta \to X$ to each of the irreducible components, so that 
$$
\xymatrix{
	\spec_{b^\ell_i}\ar@{^(->}[r]^{\iota^\ell_i}\ar[dr]_{\pi^\ell_{i}}&\spec_\beta 
	\ar[d]^{\pi}\\
&	X
	}
$$
commutes. 
We consider the nodal locus $\V^{\nod}_{\ol{r}}\subset \V_{\ol{r}}$, consisting of  spectral curves with smooth irreducible components intersecting only in nodal points. Note that $\V_{\ol{r}}^\nod$ is dense within $\V_{\ol{r}}$ and the latter is dense in $\H_{\ol{r}}$. 
\begin{lemma}\label{lm intersection irred comp}
Let $\beta \in \V_{\ol{r}}$. Then $D^{\ell,\ell'}_{i,i'}=\spec_{b^\ell_i}\cap\spec_{b^{\ell'}_{i'}}$ is a divisor linearly equivalent to $K^{r_ir_{i'}}$, thus of length
$2r_ir_{i'}(g-1)$.

Moreover, if $\beta \in \V_{\ol{r}}^\nod$, then the divisor of singularities of $\ol{X}_\beta$ has simple points, and is given by the union $D = \bigcup_{\ell < \ell',i<i'} D^{\ell,\ell'}_{i,i'}$, and the normalization is $\nu_\beta:\wt{X}_\beta=\ol{X}_{b^1_1} \sqcup \dots \sqcup \ol{X}_{b^1_{m_1}} \sqcup \dots \sqcup \ol{X}_{b^s_1} \sqcup \dots \sqcup \ol{X}_{b^s_{m_s}} \to \ol{X}_\beta$. 
\end{lemma}

\begin{proof}
To see the first statement, deform the plane curve $\spec_{b_i}$ to $\lambda^{r_i}=0$. Then, the intersection with $\spec_{b_{i'}}$ is the vanishing locus of a section of $\pi^*K^{r_{i'}}$ along $X$ with multiplicity $r_{i}$. The second and third statements are obvious.
\end{proof}

The following proposition is proved as Proposition \ref{pr intersection of the BBB with the Hitchin fibre}.

\begin{proposition}\label{prop spectral data levi}
Let $\beta \in \V_{\ol{r}}^{\nod}$, and let $\delta_i=(r_i^2-r_i)(g-1)$. Then 
$$
h^{-1}(\beta) \cap \M_{\ol{r}} =\check{\nu} \left ( \Jac^{\ol{\delta}}(\wt{X}_\beta)\right ),
$$
where $\ol{\delta}=(\delta_1,\stackrel{m_1}{\dots},\delta_1,\dots,\delta_s, \stackrel{m_s}{\dots}, \delta_s)$.
\end{proposition}

\subsection{Parabolic subgroups and complex Lagrangian subvarieties}\label{sect BAA para}

Let $\P_{\ol{r}}$ be the parabolic subgroup whose Levi subgroup is $\L_{\ol{r}}$. Recall that the corresponding unipotent radical is $\U_{\ol{r}}=[\P_{\ol{r}},\P_{\ol{r}}]$, and one has the identificaltion $\P_{\ol{r}} = \L_{\ol{r}} \ltimes \U_{\ol{r}}$. In this section we construct Lagrangian subvarieties associated to the choice of the a parabolic subgroup of the form $\P_{\ol{r}}$.


Denote the locus of those Higgs bundles reducing its structure group to $\P_{\ol{r}}$ by
\[
\Par_{\, \ol{r}} = \left\{(E,\varphi)\in\M_n\ \left| \begin{array}{l}
\exists \, \sigma \in H^0(X,E/\P_{\ol{r}}),\\
\varphi\in H^0(X,E_\sigma(\mathfrak{p}_{\ol{r}})\otimes K).
\end{array}\right. \right\}.
\]

Proceeding as in Proposition \ref{pr Bor 1}, one can prove that $\Par_{\ol{r}}$ coincides with the preimage of $\H_{\ol{r}}$ under the Hitchin map. 

\begin{proposition} \label{pr Par is the preimage of H_r}
One has the following,
\[
\M_n\times_{\H} \H_{\ol{r}} = \Par_{\, \ol{r}}.
\]
\end{proposition}

For $\ol{r}=(r_1, \stackrel{m_1}{\dots}, r_1, \dots, r_s, \stackrel{m_s}{\dots}, r_s)$ fixed, we say that $J$ is an ordering of $\ol{r}$ if it is an ordering of the positive integers $\{ r_1, \stackrel{m_1}{\dots}, r_1, \dots, r_s, \stackrel{m_s}{\dots}, r_s \}$. Let us denote by $\Ord_{\, \ol{r}}$ the set of orderings of $\ol{r}$. Given $\beta \in \V_{\ol{r}}$ one can consider an ordering $J_\beta = \left ( \ol{X}_1, \dots, \ol{X}_m \right )$ of the irreducible components of $\ol{X}_\beta$ where the $j$-th element is the irreducible component indexed by $b^{\ell_j}_{i_j}$. Accordingly with $J_\beta$ denote by $\pi_j$ the restriction to the irreducible component $\ol{X}_j$ of the projection $\pi : \ol{X}_\beta \to X$ and abbreviate by $r_j := r_{\ell_j}$ the degree of the covering of $X$ associated to $\ol{X}_j \stackrel{r_j:1}{\to} X$. We say that the ordering $J_\beta$ respects $J$ if we obtain $J$ out of $J_\beta$ by setting at the $j$-th position, the rank $r_j$ of the corresponding irreducible component $\ol{X}_j$.

In order to state the equivalent to Proposition \ref{pr description line bundles on V^nod} some extra care is needed, as the fact that the integers $r_i$ are different, breaks the symmetry we have in the case of Borel groups, so that orderings of the indices need to be taken into account.
\begin{proposition}\label{prop Par}
Let $\beta \in \V_{\ol{r}}$ be associated to a spectral curve $\ol{X}_\beta$ has $m = m_{\ol{r}}$ irreducible components $\ol{X}_{b^1_1},\dots, \ol{X}_{b^1_{m_1}}, \dots ,\ol{X}_{b^s_1},\dots, \ol{X}_{b^s_{m_s}}$. Let $(E,\varphi)$ be a Higgs bundle whose spectral data consists of a line bundle $L$ over $\ol{X}_\beta$.  For any ordering of $\ol{r}$, $J \in \Ord_{\ol{r}}$, and any ordering $J_\beta$ of the irreducible components of $\ol{X}_\beta$ respecting $J$, one can choose canonically a filtration
$$
(E_{J_\beta})_\bullet \, : \, 0 \subsetneq (E_{1},\varphi_{1}) \subsetneq \dots \subsetneq (E_{m},\varphi_{m}) = (E,\varphi),
$$
such that
$$
(E_{j},\varphi_{j})/(E_{j-1},\varphi_{j-1}) = (\pi_{j,*}L|_{\ol{X}_j}\otimes K^{-R^{J}_j},\varphi_{j}/\varphi_{j-1}),
$$
where $R^{J}_j= \sum_{k\geq j+1}r_{k}r_{j}$ depends only on $J$ and $\varphi_{j}/\varphi_{j-1}$ is determined by $\ol{X}_j$ as explained in \eqref{eq Higgs field}. Note that in the expression of $R^J_j$ $r_k$ may be equal to $r_j$.
\end{proposition}


Given a line bundle of trivial degree $\Ll \in \Jac^0(X)$ and a point $x_0$, we define for every $r$,
\[
\hat{\Ll}_r := \Ll \otimes \Oo(x_0)^{(r-1)(g-1)}.
\]
Recall from \eqref{eq Hitchin section} the description of the Hitchin section of $h : \M_r \to \H_r$ associated to a line bundle of degree $(r-1)(g-1)$ over $X$. Observe that one has
\[
\Sigma_{\hat{\Ll}_{r}} : \H_r \to \M_r.
\]

For a given $\Ll \in \Jac^0(X)$, we define the subvariety of $\Par_{\, \ol{r}}$
\begin{equation}\label{eq Uni par Hitchin section}
\Uni_{\, \ol{r}}(\Ll) := \left\{
(E,\varphi) \in \Par_{\, \ol{r}} \ 
\left|\ 
\begin{array}{l}
\exists \, \sigma\in H^0(X,E/\P_{\ol{r}}), \, \textnormal{and} \, J \in \Ord_{\, \ol{r}} :\\
\varphi\in H^0(X,E_\sigma(\mathfrak{p}_{\ol{r}})\otimes K);\\
(E_\sigma, \varphi) / \U_{\ol{r}} := \Sigma_{\hat{\Ll}_{r_1}}(\beta) \otimes K^{-R^J_1} \boxplus \dots \boxplus \Sigma_{\hat{\Ll}_{r_m}}(\beta) \otimes K^{-R^J_m}.
\end{array}
\right.
\right\}.
\end{equation}

Using Proposition \ref{prop Par}, we can study the spectral data of the Higgs bundles contained in $\Uni_{\, \ol{r}}(\Ll)$.

\begin{proposition}\label{prop Uni_r and Hitchin fibration}
One has the following,
\begin{enumerate}
\item \label{it Uni_r surjects to V_r} The restriction of $\Uni_{\, \ol{r}}(\Ll)$ to $\V_{\ol{r}}^\nod$ is surjective.

\item \label{it fibers Uni_r} Let $\beta \in \V_{\ol{r}}^{\nod}$, we have that
$$
\Uni_{\, \ol{r}}(\Ll) \cap h^{-1}(\beta) \cap \Jac(\spec_\beta) = \hat{\nu}^{-1}(\hat{\Ll}_{r_1}, \stackrel{m_1}{\dots}, \hat{\Ll}_{r_1}, \dots, \hat{\Ll}_{r_s}, \stackrel{m_s}{\dots}, \hat{\Ll}_{r_s}).
$$
\end{enumerate}
\end{proposition}

We are now in a position to prove that $\Uni_{\, \ol{r}}(\Ll)$ is Lagrangian, hence a suitable choice for the support of a $\BAA$-brane.

\begin{theorem}\label{thm Uni r L lagrangian}
The subscheme $\Uni_{\, \ol{r}}(\Ll)$ is Lagrangian.
\end{theorem}

\begin{proof}
It is enough to prove that the open subset $\Uni_{\ol{r}}(\Ll)^\nod$ given by the the restriction of $\Uni_{\ol{r}}(\Ll)$ to $\V_{\ol{r}}^\nod$, is Lagrangian. 

Fix $\beta \in \V_{\ol{r}}^{\nod}$. By  Proposition \ref{prop Uni_r and Hitchin fibration} \eqref{it fibers Uni_r} the intersection of $\Uni_{\ol{r}}(\Ll) \cap h^{-1}(\beta)$ with $\Jac(\ol{X}_\beta)$ is non-empty, so there are Higgs bundles $(E,\varphi)$ which have a line bundle as spectral data. Those $(E,\varphi)$ are stable hence are smooth points in $\Uni_{\, \ol{r}}(\Ll)$. With all this, we  prove isotropicity as we did in Proposition \ref{pr uni isotropic}.

By Lemma \ref{lm intersection irred comp} and Lemma \ref{lm fibration picards}, there is an exact sequence
$$
0\longrightarrow (\CC^\times)^{\delta_{\ol{r}}- s + 1} \longrightarrow \Jac(\spec_b) \stackrel{\hat{\nu}}{\longrightarrow}\Jac(\wt{X}_b)\longrightarrow 0
$$
where $\delta_{\ol{r}}=\sum_{1\leq i<j\leq s}2r_ir_j(g-1)$. It then follows by Proposition \ref{prop Uni_r and Hitchin fibration} \eqref{it fibers Uni_r}  that $$\dim\Uni_{\ol{r}}(\Ll) \cap h^{-1}(\beta)=\dim \Jac(\spec_\beta)=\delta_{\ol{r}}- s + 1.$$ 
By
 Proposition \ref{prop Uni_r and Hitchin fibration}  \eqref{it Uni_r surjects to V_r}, one has that
$$
\V_{\ol{r}}^{\nod}\subset h(\Uni_{\, \ol{r}}(\Ll)),
$$
and recall that $\V^\nod_{\ol{r}}$ is dense in $\H_{\ol{r}}$, so they both have the same dimension. Since there are smooth points in $\Uni_{\, \ol{r}}(\Ll)$, it follows that the dimension is 
\begin{align*}
\dim \Uni^{\ol{r}}(\Ll) & =\delta_{\ol{r}}-s+1+\dim \H_{\ol{r}}=\delta_{\ol{r}}-s+1+\sum_{i}(r_i^2(g-1)+1)
\\
& = n^2(g-1)+1,
\end{align*} 
which is half of the dimension of $\M_n$.
\end{proof}

\begin{remark}\label{rk assumption reasonable}
After endowing $\M_{\ol{r}}$ and $\Uni_{\, \ol{r}}(\Ll)$ with a suitable hyperk\"ahler and flat bundle respectively, one obtains a pair of branes, the first of type $\BBB$ and the later of type $\BAA$. Propositions \ref{prop spectral data levi} and \ref{prop Uni_r and Hitchin fibration} indicate that there exists a duality between these branes similar to the one we envisaged  in Section \ref{sc duality} for the case of Borel subgroups ({\it i.e.} $\ol{r} = (1, \dots, 1)$).
\end{remark}


Now, it is also possible to construct more general unitary Lagragian submanifolds, even in the absence of Hitchin sections. The key is to use very stable bundles to produce Lagrangian multisections of the Hitchin map. Given a vector bundle $E$ we say, after Drinfeld \cite{drinfeld, laumon}, that $E$ is {\it very stable} if it has no non-zero nilpotent Higgs fields. This implies that  $E$ is stable \cite[Proposition 3.5]{laumon} (provided $g \geq 2$). Furthermore, very stable bundles are dense within the moduli space of vector bundles \cite[Proposition 3.5]{laumon}. Gathering the results of Pauly and the second author (see \cite[Theorem 1.1 and Corollary 1.2]{verystable}) with  the remark \cite[Corollary 7.3]{TorsionBundles}, one gets

\begin{theorem}\label{tm very stable}
Let $E$ be a stable bundle. Then, $E$ is very stable if and only if the Lagrangian subvariety given by the embedding
\[
\map{H^0(X,\End(E) \otimes K)}{\M_n}{\phi}{(E,\phi),}{}
\]
is provides a Lagrangian multisection of the Hitchin fibration ({\it i.e.} the restriction of the Hitchin fibration to $H^0(X,\End(E) \otimes K) \hookrightarrow \M_n$ is finite and surjective).  
\end{theorem}

Set $m = m_{\ol{r}}$. Given an ordering $J \in \Ord_{\ol{r}}$ consider an $m$-tuple of very stable vector bundles over $X$, $\ol{E} = (E_1,\dots,E_{m})$, whose $i$-th element has $\rk\  E_i=r_i$ given by the $i$-th position of $J$. Denote $\deg E_i=e_i$ and 
$$
f_i^J=e_i+(r_i^2-r_i)(g-1) + 2R_i^J(g-1)
$$
where $R_{i}^J=\sum_{k\geq i+1}r_{j_i}r_{j_k}$ are defined as in Proposition \ref{prop Par}. From now on, we shall assume that the choice of $J$ and $\ol{E}$ is done under the following numerical condition on the degrees $e_i$.

\begin{assumption}\label{assumption}
Let $\ol{e} = (e_1, \dots, e_m)$ be an $m$-tuple of integers and pick $J \in \Ord_{\, \ol{r}}$. Suppose that, for all subset $I\subset \{1,\dots,m\}$, there  are inequalities
\begin{equation}\label{eq dis}
\sum_{i\in I} f_i^J>(r_I^2-r_I)(g-1),
\end{equation}
where $r_I=\sum_{i\in I}r_{j_i}$, and when $I = \{1,\dots,m\}$ one has the equality
\[
\sum_{i = 1}^m f_i^J = (n^2-n)(g-1).
\]
\end{assumption}

Given a $m$-tuple of very stable bundles $\ol{E}$ whose degrees $\ol{e}$ satisfy Assumption \ref{assumption}, we define the following subvariety of $\Par_{\, \ol{r}}$,

\begin{equation}\label{eq Uni par}
\Uni_{\ol{r}}(\ol{E}) := \left\{
(E,\varphi)\ 
\left|\ 
\begin{array}{l}
\exists \, \sigma\in H^0(X,E/\P_{\ol{r}}):\\ 
\varphi\in H^0(X,E_\sigma(\mathfrak{p}_{\ol{r}})\otimes K);\\
E_\sigma / \U_{\ol{r}} := E_{\L_{\ol{r}}}\cong\bigoplus_{i=1}^m E_i.
\end{array}
\right.
\right\}.
\end{equation}

In what follows we prove that $\Uni^{\ol{r}}(\ol{E})$ is a Lagrangian submanifold. As in the case of $\Uni_{\, \ol{r}}(\Ll)$, this is proven through the study the associated spectral data.

Consider restriction of the Hitchin map $h$ to $\Uni_{\, \ol{r}}(\ol{E})$. After Proposition \ref{pr Par is the preimage of H_r} one has that the image is contained in $\H_{\ol{r}}$,
$$
h : \Uni_{\ol{r}}(\ol{E}) \longrightarrow \Hr.
$$

Before we can give the analogous to Proposition \ref{pr filtration for L-normalized}, we need an intermediate result.

\begin{proposition}\label{prop fibers dual brane}
Let $\beta \in \V^\nod$. Assume that $\ol{E}$ satisfies Assumption \ref{assumption} and denote by $S_{i,\beta}$ the finite set of Higgs bundles over $\beta$ admitting $E_i$ as underlying vector bundle. Let $\Ss_{i,\beta}$ the associated set of spectral data over $\ol{X}_\beta$ associated to each of the Higgs bundles in $S_{i,\beta}$. For each $J \in \Ord_{\ol{r}}$, pick
\begin{equation}\label{eq orderings}\hat{\Ll}^{J}_{\ol{E}, \beta}= ( \Ll_1 \otimes\pi^*_1 K^{R^{J}_1},\dots, \Ll_m\otimes \pi^*_m K^{R_m^J}),
\end{equation}
where $\Ll_i \in \Ss_{i, \beta}$. Let us denote by $\Ss^J_\beta$ the set of all tuples of the form \eqref{eq orderings}. 

Assume that $\ol{E}$ satisfies Assumption \ref{assumption} $i)$. Let $b\in \H_{\ol{r}}^{\nod}$, and let $\mathcal{O}rd_s$ denote the set of orderings of $\{1,\dots,s\}$. For each $J\in\mathcal{O}rd_s$, let $\hat{\Ll}^J$ be as in \eqref{eq orderings}. Then, $\Uni_{\, \ol{r}}(\ol{E}) \cap h^{-1}(b) \cap \Jac^{\ol{d}}(\spec_\beta)$ is either empty or
$$
\Uni_{\, \ol{r}}(\ol{E}) \cap h^{-1}(b) \cap \Jac^{\ol{d}}(\spec_\beta) =\bigcup_{\hat{\Ll}^{J}_{\ol{E}, \beta}\in\Ss^J_{\beta}} \hat{\nu}^{-1}(\hat{\Ll}^{J}_{\ol{E}, \beta})
$$
where we identify $\Jac^{\ol{d}}(\spec_b)$ with an open subset of $h^{-1}(b)$ and define
 $$
 \hat{\nu}:\Jac^{\ol{d}}(\spec_b)\longrightarrow\Jac^{\ol{d}}(\wt{X}_b)
 $$
 to be the pullback map.
\end{proposition}

\begin{proof}
After checking that \eqref{eq dis} ensures the stability of the points of $\Uni_{\ol{r}}(\ol{E}) \cap h^{-1}(b)$, the proof follows as in Proposition \ref{pr filtration for L-normalized}.
\end{proof}

Continuing the parallelism with $\Uni(\Ll)$, we next prove Lagrangianity of the submanifold $\Uni_{\ol{r}}(\ol{E})$.

\begin{theorem}\label{thm Unir lagrangian}
Under Assumption \ref{assumption}, the subscheme $\Uni_{\ol{r}}(\ol{E})$ is Lagrangian.
\end{theorem}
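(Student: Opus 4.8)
The plan is to run, mutatis mutandis, the same two-step argument that proved Theorem~\ref{tm uni Lagrangian}: first show that $\Uni^{\ol{r}}(\ol{E})$ is isotropic for the holomorphic symplectic form $\Omega_1$ (the analogue of Proposition~\ref{pr uni isotropic}), and then show it is mid-dimensional inside $\M_n$ (the analogue of Proposition~\ref{pr mid-dimensionality of Uni}); the two statements together force $\Uni^{\ol{r}}(\ol{E})$ to be Lagrangian.

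For isotropy, I would first rewrite \eqref{eq Uni par} in gauge-theoretic terms exactly as \eqref{eq dual brane} was obtained from \eqref{eq definition of Uni}: fixing the topologically trivial $C^\infty$ bundle $\EE$ with a reduction $\EE_{\P}$ to $\P$ and setting $\EE_{\L}=\EE_{\P}/\U$, the subvariety $\Uni^{\ol{r}}(\ol{E})$ is the locus of those $(\ol{\partial}_A,\varphi)$ which, up to the action of $\mc{G}^c$, satisfy $\ol{\partial}_A=\ol{\partial}_{\L}+N$ with $N\in\Omega^{0,1}(X,\EE_{\P}(\mathfrak{u}))$ and $(\EE_{\L},\ol{\partial}_{\L})\cong\bigoplus_{i=1}^s E_i$, together with $\varphi\in\Omega^0(X,\EE_{\P}(\mathfrak{p})\otimes K)$. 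At a stable point a tangent vector can then be represented, modulo the image of the gauge Lie algebra, by a pair in $\Omega^{0,1}(X,\EE_{\P}(\mathfrak{u}))\times\Omega^0(X,\EE_{\P}(\mathfrak{p})\otimes K)$; since $\mathfrak{u}\subset\mathfrak{p}^{\perp}$ for the Killing form and $\Omega_1$ in \eqref{eq Omega1} is gauge-invariant and built from that form, it vanishes on such pairs. One still needs the stable locus to be nonempty: this follows from the inequalities \eqref{eq dis} in Assumption~\ref{assumption}$i)$ together with Proposition~\ref{prop fibers dual brane}, arguing as in Proposition~\ref{pr filtration for L-normalized}$i)$.

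For mid-dimensionality I would use the fibration $\hrr(\ol{E}):\Uni^{\ol{r}}(\ol{E})\to\Hr$, recalling that $\dim\Hr=\sum_{i=1}^s\dim\H_{r_i}=\sum_{i=1}^s\bigl(r_i^2(g-1)+1\bigr)$. Over the dense open $\Hr^{\nod}$, Proposition~\ref{prop fibers dual brane} identifies the generic fibre (nonempty by Assumption~\ref{assumption}) with the finite union $\bigcup_{J}\hat{\nu}^{-1}(\hat{\Ll}^J)$, each piece being a fibre of the smooth fibration $\hat{\nu}$ of Lemma~\ref{lm fibration picards}; by that lemma and Lemma~\ref{lm intersection irred comp} each such fibre is a torus of dimension $|D|-s+1$, where $|D|=\sum_{i<j}2r_ir_j(g-1)=(n^2-\sum_i r_i^2)(g-1)$ and $s$ is the number of connected components of $\specn_b$. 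By smoothness of the generic point the Hitchin fibre is transverse to the local Hitchin section, so
\[
\dim\Uni^{\ol{r}}(\ol{E})=\sum_{i=1}^s\bigl(r_i^2(g-1)+1\bigr)+\Bigl(n^2-\sum_{i=1}^s r_i^2\Bigr)(g-1)-s+1=n^2(g-1)+1,
\]
which is $\tfrac12\dim\M_n$ by \eqref{eq dim M_n}. Isotropy bounds the dimension from above by $\tfrac12\dim\M_n$, so $\Uni^{\ol{r}}(\ol{E})$ is Lagrangian.

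The main obstacle I expect is the bookkeeping forced by the asymmetry of the $r_i$: unlike in the Borel case, stability of the spectral data is not automatic, so Assumption~\ref{assumption}$i)$ is genuinely used both to guarantee that the generic fibre is nonempty (hence that $\Uni^{\ol{r}}(\ol{E})$ dominates $\Hr$, making the dimension count valid) and that its points are stable (so the gauge-theoretic tangent-space description underlying the isotropy argument holds on a dense open set); the concrete work behind this is checking the inequalities \eqref{eq Schaub} for all pure one-dimensional subschemes $Z\subset\spec_b$, exactly as in Proposition~\ref{pr filtration for L-normalized}$i)$. A secondary, benign point is that the generic fibre is in general a finite union over orderings $J$ rather than irreducible; this does not affect the dimension, but it should be noted when claiming that every irreducible component of $\Uni^{\ol{r}}(\ol{E})$ is mid-dimensional.
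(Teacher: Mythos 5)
Your proposal is correct and follows essentially the same route as the paper: isotropy is obtained by the gauge-theoretic argument of Proposition \ref{pr uni isotropic} (using $\mathfrak{u}\subset\mathfrak{p}^{\perp}$ and the existence of stable points guaranteed by Assumption \ref{assumption} via Proposition \ref{prop fibers dual brane}), and mid-dimensionality comes from the same count $\dim\Hr+\bigl(\delta_{\ol{r}}-s+1\bigr)=n^2(g-1)+1$ with $\delta_{\ol{r}}=\sum_{i<j}2r_ir_j(g-1)$, the fibre dimension being read off from the normalization exact sequence of Lemma \ref{lm intersection irred comp}. Your remarks on where Assumption \ref{assumption} is genuinely needed match the paper's use of it.
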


\begin{proof}
The proof is analogous to that of Theorem \ref{thm Uni r L lagrangian}.
\end{proof}

\begin{remark}\label{rk other degrees}
For the sake of clarity, we have chosen to work with the moduli space of degree $0$ Higgs bundles. Note however that the subvarieties $\Mr$ and $\Uni^{\ol{r}}$ make sense in a larger context. Indeed, consider the moduli space of rank $n$, degree
$d$ Higgs bundles $\M_X(n, d)$ with $(n,d)\neq 1$. Then, $\M_X(n, d)^{sing}\neq\emptyset$, and so there will exist partitions $\ol{r}$ of $n$ for which $\Mr\neq\emptyset$. Note that in that case the (semi)stability condition for torsion free sheaves should then be modified accordingly.
\end{remark}

\end{document}